\newtheorem{theor}{Theorem}
\newtheorem{lemma}[theor]{Lemma}
\newtheorem{corol}[theor]{Corollary}
\newenvironment{proof}{\noindent{\scshape Proof.}}{\hspace{2mm} $\square$}
\newcommand{\Z}{\mathbb{Z}}
\newcommand{\R}{\mathbb{R}}
\newcommand{\ep}{\epsilon}
\DeclareMathOperator{\card}{card}
\DeclareMathOperator{\sign}{sign}
\DeclareMathOperator{\Span}{Span}
\begin{document}

\begin{frontmatter}

\title     {The role of space in the exploitation of resources}
\runtitle  {The role of space in the exploitation of resources}
\author    {Y. Kang and N. Lanchier\thanks{Research supported in part by NSF Grant DMS-10-05282.}}
\runauthor {Y. Kang and N. Lanchier}
\address   {Applied Sciences and Mathematics, \\ Arizona State University, \\ Mesa, AZ 85212, USA.}
\address   {School of Mathematical and Statistical Sciences, \\ Arizona State University, \\ Tempe, AZ 85287, USA.}

\begin{abstract} \ \
 In order to understand the role of space in ecological communities where each species produces a certain type of
 resource and has varying abilities to exploit the resources produced by its own species and by the other species,
 we carry out a comparative study of an interacting particle system and its mean-field approximation.
 For a wide range of parameter values, we show both analytically and numerically that the spatial model results in
 predictions that significantly differ from its nonspatial counterpart, indicating that the use of the mean-field
 approach to describe the evolution of communities in which individuals only interact locally is invalid.
 In two-species communities, the disagreements between the models appear either when both species compete by producing
 resources that are more beneficial for their own species or when both species cooperate by producing resources that
 are more beneficial for the other species.
 In particular, while both species coexist if and only if they cooperate in the mean-field approximation, the inclusion
 of space in the form of local interactions may prevent coexistence even in cooperative communities.
 Introducing additional species, cooperation is no longer the only mechanism that promotes coexistence.
 We prove that, in three-species communities, coexistence results either from a global cooperative behavior, or from
 rock-paper-scissors type interactions, or from a mixture of these dynamics, which excludes in particular all cases
 in which two species compete.
 Finally, and more importantly, we show numerically that the inclusion of space has antagonistic effects on coexistence
 depending on the mechanism involved, preventing coexistence in the presence of cooperation but promoting coexistence
 in the presence of rock-paper-scissors interactions.
 Although these results are partly proved analytically for both models, we also provide somewhat more explicit heuristic
 arguments to explain the reason why the models result in different predictions.
\end{abstract}

\begin{keyword}[class=AMS]
\kwd[Primary ]{92D25, 60K35}
\end{keyword}

\begin{keyword}
\kwd{Defector, cooperator, rock-paper-scissors interactions, interacting particle system, mean-field approximation.}
\end{keyword}

\end{frontmatter}

%%%%%%%%%%%%%%%%%%%%%%%%%%%%%%%%%%%%%%%%%%%%%%%%%%%%%%%%%%%%%%%%%%%%%%%%%%%%%%%%%%%%%%%%%%%%%%%%%%%%%%%%%%%%%%%%%%%%%%%%%%%%%%%%%%%%%%%%%%

\section{Introduction}
\label{sec:intro}

\indent The main objective of this article is to understand the role of space, taking the form of local but also
 stochastic interactions, in the long-term behavior of ecological communities.
 This is carried out through the comparison of analytical and numerical results for an interacting particle system and its
 nonspatial mean-field approximation.
 Both the spatial and nonspatial models mimic the dynamics of communities in which each species produces a certain type of
 resource and has varying abilities to exploit the resources produced by its own species and by the other species.
 As explained later in this paper after the rigorous mathematical description of the interacting particle system and
 its mean-field approximation, this framework can be used to model a wide variety of biological interactions such as
 competition, mutualism, allelopathy, or predation.
 In the interacting particle system, individuals are located on an infinite grid and only have access to the resources produced
 by their nearest neighbors.
 In contrast, the mean-field approximation assumes that all individuals interact globally therefore the dynamics only depend
 on the overall densities of species, which results in a nonspatial deterministic model that consists of a system of coupled
 differential equations.
 The reason for this comparative study is mainly motivated by the following key question raised in the seminal article of
 Durrett and Levin \cite{durrett_levin_1994b} about the importance of space:
 which details should be included in a mathematical model and which ones can be ignored?
 The simplest approach in modeling the evolution of inherently spatial interacting populations is to assume that the system is
 homogeneously mixing and to use a system of ordinary differential equations, which is a common approach in the life science
 literature.
 At the other extreme, one can use the framework of interacting particle systems in which individuals are discrete and space
 is treated explicitly.
 For a wide range of parameter values, we show that both approaches result in different predictions as for the sets of
 species that survive or coexist in the long run.
 This indicates that, at least for the type of interactions considered in this article, the use of the mean-field approach to
 model the evolution of communities in which individuals are static and interact only locally is invalid.
 In other words, local interactions cannot be ignored and should be included in the model.

\indent Using the terminology of game theory, a species that has a higher ability, respectively a lower ability, than
 all other species to exploit the resources it produces can be seen as a defector, respectively a cooperator.
 In the presence of only two species, the spatial and nonspatial models strongly disagree when both species are defectors
 or both species are cooperators.
 In particular, while mutual cooperation always leads to coexistence in the mean-field model, the inclusion of local
 interactions translates into a reduction of the coexistence region.
 Past research has revealed that the inclusion of space can indeed prevent coexistence.
 This has been proved analytically for a spatially explicit version of the Lotka-Volterra model introduced
 in \cite{neuhauser_pacala_1999} and the non Mendelian diploid model introduced in \cite{lanchier_neuhauser_2009}.
 These two references and the companion paper \cite{lanchier_2010}, which provides rigorous proofs of the analytical results
 stated in this article for the interacting particle system, focus on spatial and nonspatial models with only two types.
 Our approach is somewhat different.
 First, one of our main objectives is to give more explicit intuitive explanations of the reason why the inclusion
 of local interactions can affect drastically the long-term behavior of ecological communities.
 In particular, heuristic arguments are provided to this extent.
 Second, the emphasis here is on the three-type models.
 Not only the analysis of the models in the presence of three species is somewhat more challenging but also, and more importantly,
 both the spatial and the nonspatial three-type models exhibit new interesting behaviors that cannot be observed when only
 two species are involved.
 For instance, whereas cooperation is the only mechanism that promotes coexistence in the presence of two species, we prove that
 there are other such mechanisms when additional species are introduced.
 Interestingly, our analysis also indicates that the inclusion of local interactions has antagonistic effects on coexistence
 depending on the mechanism involved, promoting coexistence in some situations but preventing coexistence in other situations.
 In particular, we believe that the three-type spatial and nonspatial models can capture most of the important aspects of
 communities involving a large number of species, whereas the two-type models do not.

%%%%%%%%%%%%%%%%%%%%%%%%%%%%%%%%%%%%%%%%%%%%%%%%%%%%%%%%%%%%%%%%%%%%%%%%%%%%%%%%%%%%%%%%%%%%%%%%%%%%%%%%%%%%%%%%%%%%%%%%%%%%%%%%%%%%%%%%%%

\section{Models description}
\label{sec:models}

\indent In order to understand the role of local interactions (and stochasticity) in the dynamics of ecological communities
 including varying abilities to exploit resources, we introduce and investigate a stochastic spatial model and its deterministic
 nonspatial analog, and then confront the predictions based on both models.
 The first model is an example of interacting particle system while the second one is its mean-field approximation which consists
 of a system of differential equations.
 For a description of the theoretical framework of interacting particle systems along with biological motivations, we refer
 the reader to Durrett and Levin \cite{durrett_levin_1994a} and Neuhauser \cite{neuhauser_2001}.
 For the sake of concreteness, we think of both models as describing the dynamics of ecological communities with $n$ species that
 we label from species 1 to species $n$, but we point out that these models can also be seen as more general models in
 game theory with potential applications in population genetics, economics, or political sciences.
 In the ecological context, the individuals of species $j$ has to be thought of as producing a resource that we simply call resource $j$.
 The interactions, either local in the spatial model or global in the nonspatial one, are dictated by the $n \times n$ matrix
 $$ M \ = \ \left(\hspace{-3pt} \begin{array}{ccccc}
      a_{1, 1} & a_{1, 2} & \cdots & a_{1, n} \\
      a_{2, 1} & a_{2, 2} & \cdots & a_{2, n} \\
       \vdots  &  \vdots  & \ddots &  \vdots  \\
      a_{n, 1} & a_{n, 2} & \cdots & a_{n, n}
    \end{array} \hspace{-3pt} \right) $$
 where coefficient $a_{i,j}$ represents the ability of an individual of species $i$ to exploit resource $j$.
 It is assumed that matrix $M$ has only nonnegative entries and, to avoid degenerated cases later, that each column of the
 matrix has at least one positive entry.
 We note that each row can be seen as the abilities of a given species to exploit each of the $n$ resources, and each column
 as the abilities of each of the $n$ species to exploit a given resource.
 In particular, the assumption that each column of the matrix has at least one positive entry means from an ecological
 point of view that each of the resources can be exploited by at least one species.
 Since different species have \emph{a priori} different abilities to exploit each of the resources, their fitness strongly
 depends upon the configuration of their environment as well.
 Therefore, there is a constant feedback between the set of fitnesses and the configuration of the environment, which creates
 nontrivial dynamics.

% % % % % % % % % % % % % % % % % % % % % % % % % % % % % % % % % % % % % % % % % % % % % % % % % % % % % % % % % % % % % % % % % % % % % 

\subsection*{The spatial model.}
 Following the traditional framework of interacting particle systems, we assume that space is discrete and time is continuous.
 More precisely, the spatial structure is represented by the $d$-dimensional integer lattice, $\Z^d$, while the temporal structure is
 represented by the variable $t$ which is any nonnegative real number.
 Each lattice site has to be thought of as a spatial location which, at any time, is occupied by an individual of one of the $n$ species.
 This corresponds to a high-density limit in which, at death, an individual is instantaneously replaced by the offspring
 of a nearby individual.
 Hence, the spatial configuration of the system at time $t$ is represented by a function $\eta_t$ that maps the integer lattice
 into the set of species, with $\eta_t (x)$ indicating the species of the individual at site $x$ at time $t$.

\indent To describe the dynamics of the spatial model but also to derive its mean-field approximation, it is convenient to think of the
 spatial structure as a graph.
 In the spatial model, each lattice site is connected by an edge to each of its $2d$ nearest neighbors, therefore, writing $x \sim y$
 to indicate that sites $x$ and $y$ are nearest neighbors, the integer
 $$ N_i (x) \ = \ \card \,\{y \in \Z^d : y \sim x \ \hbox{and} \ \eta_t (y) = i \} $$
 denotes the number of type $i$ neighbors of site $x$.
 The species at each site is updated at the arrival times of a Poisson process with intensity 1, that is the times between consecutive
 updates at a given site are independent and exponentially distributed random variables with parameter 1.
 The dynamics are dictated by stochastic and local interactions, meaning that the new type at each update is chosen randomly from the
 neighborhood, in order to model the inclusion of an explicit space.
 More precisely, given that the species at site $x$ is of type $j$ at the time of an update, the new species at this site is chosen to
 be species $i$ with probability
\begin{equation}
\label{eq:proba}
 \frac{a_{i, j} \,N_i (x)}{a_{1, j} \,N_1 (x) + a_{2, j} \,N_2 (x) + \cdots + a_{n, j} \,N_n (x)}
\end{equation}
 and we assume, in the event that the denominator is equal to zero, that the update is canceled so the species at site $x$ remains
 unchanged.
 This probability can be interpreted as follows.
 Since the coefficient $a_{i,j}$ represents the ability of an individual of species $i$ to exploit resource $j$, the numerator
 is simply the overall ability of the type $i$ neighbors to exploit the resource at site $x$ at the time of the update.
 Similarly, the denominator represents the overall ability of all the neighbors to exploit the resource at site $x$.
 Therefore, the probability in \eqref{eq:proba} is the relative ability of the type $i$ neighbors to exploit the resource at site $x$,
 which we also naturally consider as the probability of the new species at site $x$ to be species $i$.
 Hence, the new species at each update is chosen randomly from the local neighborhood with a selective advantage for neighbors that
 have a higher ability to exploit the resource present at the site to be updated.
 Finally, we point out that, because there are countably many lattice sites, updates at different sites cannot occur simultaneously,
 so the new type can indeed be chosen from the local neighborhood without ambiguity.

% % % % % % % % % % % % % % % % % % % % % % % % % % % % % % % % % % % % % % % % % % % % % % % % % % % % % % % % % % % % % % % % % % % % % 

\subsection*{The mean-field approximation.}
 The mean-field model is naturally derived from the original interacting particle system by excluding both space and
 stochasticity.
 For a discussion about the connections between interacting particle systems and their mean-field approximations, we refer the
 reader to Durrett and Levin \cite{durrett_levin_1994b}.
 To motivate the definition of the mean-field model, we first replace the infinite lattice by a finite complete graph, that is a
 finite graph in which each site is connected to all other sites including itself, so that all sites have the same neighborhood
 that consists of all the vertex set.
 Describing the interactions as previously but using this universal neighborhood rather than local neighborhoods results in
 a nonspatial environment in which the types at different sites are independent and identically distributed random variables.
 In particular,
 $$ u_i (t) \ = \ P \,(\eta_t (x) = i) \quad \hbox{for all} \ t \geq 0 \ \hbox{and} \ i = 1, 2, \ldots, n $$
 where $x$ is a denominated site, are probabilities that indeed no longer depend on $x$.
 The mean-field approximation is then defined as the deterministic system of coupled ordinary differential equations that describes
 the evolution of these probabilities.
 To derive this deterministic system from the stochastic dynamics, we first observe that
\begin{equation}
\label{eq:decompose}
 \begin{array}{rcl}
     u_i (t + h) - u_i (t) & = &
     P \,(\eta_{t + h} (x) = i, \,\eta_t (x) \neq i) - P \,(\eta_{t + h} (x) \neq i, \,\eta_t (x) = i) \vspace{8pt} \\ & = &
    \displaystyle \sum_{j \neq i} \ P \,(\eta_{t + h} (x) = i, \,\eta_t (x) = j) -
    \displaystyle \sum_{j \neq i} \ P \,(\eta_{t + h} (x) = j, \,\eta_t (x) = i). \end{array}
\end{equation}
 Moreover, the probability that, at the next update, a type $j$ is replaced by a type $i$ is also the probability $u_j$ that the site
 chosen to be updated is of type $j$ times the probability \eqref{eq:proba} that the new type selected is type $i$.
 Since in addition sites are independent and the probability of an update at site $x$ in a very short time interval is roughly equal
 to the length of this interval, we obtain
\begin{equation}
\label{eq:j->i}
  P \,(\eta_{t + h} (x) = i, \,\eta_t (x) = j) \ = \
  h \ u_j (t) \times \frac{a_{i, j} \,u_i (t)}{a_{1, j} \,u_1 (t) + \cdots + a_{n, j} \,u_n (t)} \ + \ o (h)
\end{equation}
 for $h > 0$ small.
 Substituting \eqref{eq:j->i} in \eqref{eq:decompose}, dividing by $h$, and taking the limit as $h \to 0$, we get
\begin{equation}
\label{eq:mean-field-1}
  \frac{du_i}{dt} \ = \
  \sum_{j = 1}^n  \ \frac{a_{i, j} \,u_i}{a_{1, j} \,u_1 + \cdots + a_{n, j} \,u_n} \times u_j \ - \
  \sum_{j = 1}^n  \ \frac{a_{j, i} \,u_j}{a_{1, i} \,u_1 + \cdots + a_{n, i} \,u_n} \times u_i
\end{equation}
 for all $i = 1, 2, \ldots, n$.
 Each fraction in the first sum of \eqref{eq:mean-field-1} is the relative ability of species $i$ to exploit resource $j$ over
 all the community which, when different species have different abilities to exploit each of the resources, strongly depends on
 the species distribution.
 Therefore, the first sum can be seen as the overall birth rate of individuals of species $i$.
 Note also that the second sum is simply equal to $u_i$, so the mean-field approximation reduces to the system
\begin{equation}
\label{eq:mean-field-2}
  \frac{du_i}{dt} \ = \ \bigg(\sum_{j = 1}^n \ \frac{a_{i, j} \,u_j}{a_{1, j} \,u_1 + \cdots + a_{n, j} \,u_n} \ - \ 1 \bigg) \ u_i
  \qquad \hbox{for} \ i = 1, 2, \ldots, n.
\end{equation}
 The minus one term indicates that the per capita death rate of each species is equal to 1, which results from the fact that, in
 the original stochastic process, each lattice site is updated at the arrival times of a Poisson process with intensity 1.

% % % % % % % % % % % % % % % % % % % % % % % % % % % % % % % % % % % % % % % % % % % % % % % % % % % % % % % % % % % % % % % % % % % % % 

\subsection*{The importance of space.}
 Even though, following the traditional terminology, the mean-field model is called \emph{nonspatial} deterministic
 approximation, both models, in fact, include space, but at different levels of details.
 In the interacting particle system, the state space consists of spatial configurations of species on the lattice that can only
 interact with their nearest neighbors therefore space takes the form of local interactions, which is referred
 to as \emph{explicit space}.
 The mean-field model is obtained by replacing the lattice by a complete graph, in particular space is again included but it now
 takes the form of global interactions, which is referred to as \emph{implicit space}.
 The interacting particle system describes more suitably populations in which individuals are static and can only disperse their
 offspring over short distances, whereas the mean-field model is more appropriate to mimic populations in which individuals
 either move or have the ability to disperse their offspring over very long distances.
 Most mathematical models introduced in the life science literature that describe spatially explicit interacting populations
 consist however of systems of ordinary differential equations that assume that populations are well-mixed.
 The objective of this study is to prove both analytically and numerically that, at least for the type of interactions considered
 in this article, the use of the mean-field approach is invalid for a wide range of parameters in the sense that both models
 result in different predictions.
 Before stating our results, it is necessary to specify what is meant here by: both models result in different predictions.
 In the interacting particle system, since the types at different lattice sites are not independent, some quantities of interest
 are the so-called spatial correlations, the correlations between the types at two sites as a function of the distance between
 these two sites.
 There is obviously no analog of these quantities in the mean-field model since it does not include any geometrical structure.
 Also, the traditional approach to compare both models is to compare the density of a given species in the mean-field model with
 the probability of a denominated site being occupied by this species type in the interacting particle system:
 whenever one of these two quantities converges to zero whereas the other one is bounded away from zero uniformly in time, we say
 that both models result in different predictions.
 In other words, both models are said to result in different predictions for a given set of parameters whenever the set of
 species surviving in the long run differ between both models.

% % % % % % % % % % % % % % % % % % % % % % % % % % % % % % % % % % % % % % % % % % % % % % % % % % % % % % % % % % % % % % % % % % % % % 

\subsection*{Biological interactions.}
 The limiting behavior of both the interacting particle system and its mean-field approximation depends upon two factors:
 the initial distribution of species, which is a spatial configuration on the lattice for the first model and an $n$-tuple
 of densities that sums up to one for the second model, and more importantly the matrix $M$ that describes the type of
 biological interactions among species.
 These different types of biological interactions are modeled by the relationships among the matrix coefficients, which
 can be conveniently described by using the terminology of game theory.
 Also, we say that a species is a \emph{cooperator}, respectively a \emph{defector}, if it has a lower ability, respectively
 a better ability, than all other species to exploit the resource it produces.
 That is, in the column corresponding to the resource produced by this species, the smallest, respectively the largest,
 coefficient is the coefficient on the diagonal.
 While a species can be neither a cooperator nor a defector in communities with more than two species, each species is
 either a cooperator or a defector in its relationship with a second species, which induces three types of interactions for
 each pair of species: we call \emph{cooperation} the interaction between two cooperators, \emph{competition} the
 interaction between two defectors, and \emph{cheating} relationship the interaction between a cooperator and a defector
 in which case the defector is called a \emph{cheater}.
 To understand the variety of biological interactions that can be encoded by the parameter matrix, it is important to
 point out that, even if the individuals of a certain species do not strictly speaking produce resources that are beneficial
 for their species or the other species, their death produces nevertheless a resource available to their neighbors: space.
 In particular, the matrix coefficient $a_{i,j}$ can represent more generally the ability of species $i$ to exploit space
 with whatever an individual of species $j$ produces:
 space with resources that are beneficial for the nearby individuals such as nutrients, or at the opposite space with
 biochemicals that have a harmful effect on the growth of other species and reduce their ability to exploit the spatial
 location available in the context of allelopathic interactions.

\indent Focusing on interactions involving only two species, when none of the species is a producer, the interactions are
 described by the matrix in which both columns are equal.
 This models competition for space with the common coefficient in each row representing the ability of the corresponding
 species to invade space.
 The species with the better ability to exploit space is a defector while the other species is a cooperator.
 Competition for space is common in plant communities, but also occurs among territorial animals such as the grey wolf,
 \emph{Canis lupus}, although in this case this is more symptomatic of a defensive behavior.
 In the former context, interactions among plants that spread nearby through their rhizome such as \emph{Euphorbia} are
 better captured by the interacting particle system while the mean-field model describes more suitably the interactions
 among seed plants that can disperse over large distances through anemochory, zoochory or hydrochory such as the dandelion,
 \emph{Taraxacum officinale}.
 When both species indeed produce resources and these resources are more beneficial for the other species, a symbiotic
 relationship called mutualism, both species are cooperators: the coefficients off diagonal are larger than the
 coefficients on the diagonal.
 Mutualism is common for instance in terrestrial plants which live in association with mycorrhizal fungi, with the plant
 providing carbon to the fungus and the fungus providing nutrients to the plant.
 As mentioned above, our models are also suited to mimic allelopathic interactions in which a so-called allelopathic
 species produces biochemicals that have detrimental effects on the growth and reproduction of other organisms.
 In this case and when the other species is indeed susceptible to the allelochemicals, the allelopathic species is a
 defector since it has a better ability to invade a spatial location saturated with allelochemicals, while the susceptible
 species can be either a defector or a cooperator depending on its relative ability to invade spatial locations void of
 allelochemicals.
 Allelopathic interactions are common among invasive plants such as the Spotted Knapweed, \emph{Centaurea maculosa},
 that uses biochemicals as a defense against other plants and herbivory.
 In predator-prey systems, the resource that the predator exploits is not produced by the prey, it is the prey itself,
 therefore one needs to slightly reinterpret the microscopic rules of the models:
 a spatial location is not invaded by a nearby predator just after the prey dies but just before, with the death of the
 prey being caused by the predator.
 In this context, the prey is always a cooperator, while the predator is either a defector or a cooperator depending on
 its relative ability to invade empty spatial locations.
 Note however that our models do not reproduce the oscillations typical of certain predator-prey systems such as the
 one designed in the popular Huffaker's mite experiment \cite{huffaker_1958} since it assumes that the predator can
 survive in the absence of the prey.
 A similar choice of parameters can also be used to model some cases of facultative parasitism in which the parasite can
 complete its life cycle without being associated with the host, provided one identifies the infected host with the
 parasite itself, which is only valid in contexts where infected hosts are sterile.
 The analysis of the mean-field model in this article and the analysis of the interacting particle system in the
 companion paper \cite{lanchier_2010} give a precise picture of the asymptotic properties of systems involving only two
 species, which includes all the biological interactions mentioned in this paragraph.

\indent For larger ecological communities involving $n$ species, the appropriate relationships among the matrix
 coefficients can be deduced by looking at the type of biological interactions that relates any two species.
 In this case, the analysis of the mean-field model pays a particular attention to coexistence and gives sharp results
 for communities that exhibit rock-paper-scissors dynamics in which three species together may coexist whereas, in
 the absence of any of these three species, the other two species cannot:
 paper covers rock, scissors cut paper and rock smashes scissors.
 Such dynamics have been observed in the common side-blotched lizard, \emph{Uta stansburiana}, that exhibits color
 polymorphisms \cite{sinervo_lively_1996}.
 Rather than including three different species, the system consists of male individuals of the common side-blotched lizard
 with three different phenotypes: orange-throated males, yellow stripe throated males, and blue-throated males,
 corresponding to three different mating strategies.
 While evolutionary theory predicts selection of the best fit, all three phenotypes coexist in nature which, according
 to the model of Sinervo and Lively \cite{sinervo_lively_1996}, is due to the fact that the three phenotypes interact
 like in the rock-paper-scissors game.
 Our numerical and anaytical results give a valuable insight into this type of dynamics.

%%%%%%%%%%%%%%%%%%%%%%%%%%%%%%%%%%%%%%%%%%%%%%%%%%%%%%%%%%%%%%%%%%%%%%%%%%%%%%%%%%%%%%%%%%%%%%%%%%%%%%%%%%%%%%%%%%%%%%%%%%%%%%%%%%%%%%%%%%

\section{Analytical results for the nonspatial deterministic model}
\label{sec:deterministic}

\indent In this section, we collect a number of analytical results for the mean-field model.
 We start with general lemmas that will be applied repeatedly afterwards to understand the limiting behavior of the system with
 two and three species, respectively.
 The first step is to identify the positive invariant sets of the mean-field model.
 For $I \subset \{1, 2, \ldots, n \}$, $I \neq \varnothing$, we let
 $$ \begin{array}{rcl}
           S_I & = & \Big\{(u_1, u_2, \ldots, u_n) \in \R_+^n : u_i = 0 \ \hbox{for} \ i \notin I \ \hbox{and} \ \sum_{i \in I} \ u_i = 1 \Big\} \vspace{8pt} \\
    \tilde S_I & = & \Big\{(u_1, u_2, \ldots, u_n) \in S_I : u_i > 0 \ \hbox{for} \ i \in I \Big\} \end{array} $$
 and simply write $S_I = S_n$ and $\tilde S_I = \tilde S_n$ when $I = \{1, 2, \ldots, n \}$.
 From the fact that $u_i$ can be interpreted as a probability, it should be intuitively clear that the set $S_n$
 is positive invariant for the mean-field model.
 Also, since individuals of either type cannot appear spontaneously in the interacting particle system, the same holds in the
 mean-field model, so each set $S_I$ should be positive invariant.
 These statements are made rigorous in the following lemma.
\begin{lemma}
\label{lem:pi}
 For all $I \subset \{1, 2, \ldots, n \}$, $I \neq \varnothing$, the set $S_I$ is positive invariant, i.e.,
 $$ (u_1 (0), \ldots, u_n (0)) \in S_I \ \ \hbox{implies that} \ \ (u_1 (t), \ldots, u_n (t)) \in S_I \ \ \hbox{for all} \ t > 0. $$
\end{lemma}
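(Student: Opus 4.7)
The plan is to verify two structural properties that together imply positive invariance of $S_I$: first, that the total mass $\sum_i u_i$ is conserved along trajectories of \eqref{eq:mean-field-2}; and second, that any coordinate $u_i$ vanishing at $t = 0$ remains zero for all $t > 0$, while the remaining coordinates stay nonnegative. Together with the definition of $S_I$, these two facts confine the trajectory to $S_I$.

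For the conservation of mass, I would differentiate $\sum_i u_i(t)$ term by term using \eqref{eq:mean-field-1}. The double sum coming from the ``birth'' terms can be rearranged by swapping the order of summation: for each fixed $j$, the inner sum $\sum_i a_{i,j} u_i$ is precisely the denominator of the corresponding fraction, so each fraction cancels and the $j$-th term reduces to $u_j$. By the same identity, the ``death'' terms in \eqref{eq:mean-field-1} sum to $\sum_i u_i$. The two contributions cancel, giving $\frac{d}{dt}\sum_i u_i \equiv 0$, hence $\sum_i u_i(t) = \sum_i u_i(0) = 1$ for all $t \geq 0$.

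For the invariance of each coordinate hyperplane $\{u_i = 0\}$, I would exploit the factorization already visible in \eqref{eq:mean-field-2}: the right-hand side of the $i$-th equation contains $u_i$ as a common factor. Writing $du_i/dt = g_i(u)\, u_i$, the constant function $u_i \equiv 0$ is a solution of this scalar equation, and uniqueness of ODE solutions then forces $u_i(t) = 0$ for all $t \geq 0$ whenever $u_i(0) = 0$. Applying this to every $i \notin I$ yields the first half of $S_I$-invariance. The same factorization also prevents coordinates from crossing into negative values, since the vector field is tangent to every face $\{u_i = 0\}$ of the nonnegative orthant, so coordinates indexed by $i \in I$ remain in $[0,1]$. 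Combined with the mass conservation just established, the trajectory remains in $S_I$.

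The main obstacle I anticipate is justifying the uniqueness step rigorously, because the right-hand side of \eqref{eq:mean-field-2} is not globally smooth on $S_n$: the denominator $a_{1,j} u_1 + \cdots + a_{n,j} u_n$ may vanish on parts of the boundary, namely when every $u_k$ with $a_{k,j} > 0$ happens to be zero. Following the convention spelled out after \eqref{eq:proba}, I would set the offending fraction to zero and then observe that each such fraction lies in $[0,1]$ whenever it is defined, so $g_i$ remains bounded on $S_n$ and is locally Lipschitz away from the singular set. That, together with the fact that $u_i \equiv 0$ is an equilibrium of the scalar equation $du_i/dt = g_i(u) u_i$, should be enough to push the uniqueness argument through; if necessary, one can first establish invariance of $\tilde S_I$, where the denominators indexed by $j \in I$ are controlled, and then pass to $S_I$ by continuous dependence on initial conditions.
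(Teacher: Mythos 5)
Your proposal is correct and follows essentially the same route as the paper's proof, which likewise rests on the two facts that $\sum_i du_i/dt \equiv 0$ and that $u_i(0)=0$ forces $u_i(t)=0$, with continuity then giving nonnegativity of the remaining coordinates. You supply more detail than the paper does --- in particular the cancellation of the denominators in the double sum and the uniqueness argument on the faces $\{u_i=0\}$, together with an honest acknowledgment of the possible degeneracy of the denominators on the boundary, which the paper's one-line proof passes over in silence.
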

\begin{proof}
 This directly follows from the fact that
 $$ \sum_{i = 1}^n \ \frac{du_i}{dt} \ = \ 0 \quad \hbox{and} \quad u_i (t) = 0 \ \hbox{whenever} \ u_i (0) = 0 \ \hbox{for all} \ i \in \{1, 2, \ldots, n \} $$
 and by noticing that the second assertion together with the continuity of the trajectories implies that for any species $i$, we have
 $u_i (t) \geq 0$ at all times $t > 0$.
 This completes the proof.
\end{proof}
\begin{lemma}
\label{lem:invariant}
 For all $I \subset \{1, 2, \ldots, n \}$, $I \neq \varnothing$, the set $\tilde S_I$ is positive invariant.
\end{lemma}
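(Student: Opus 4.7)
The plan is to combine Lemma~\ref{lem:pi} with an elementary integrating-factor argument on each coordinate. Since $\tilde S_I \subset S_I$, Lemma~\ref{lem:pi} already gives $(u_1(t), \ldots, u_n(t)) \in S_I$ for all $t > 0$, which takes care of the constraints $u_i(t) = 0$ for $i \notin I$ and $\sum_{i \in I} u_i(t) = 1$. The only thing left to verify is that no coordinate $u_i$ with $i \in I$ reaches zero at a positive time.

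For that, I would rewrite \eqref{eq:mean-field-2} in the multiplicative form $du_i/dt = f_i(u) \, u_i$, where
\[ f_i(u) \ = \ \sum_{j = 1}^n \ \frac{a_{i, j} \, u_j}{a_{1, j} \, u_1 + \cdots + a_{n, j} \, u_n} \ - \ 1, \]
with the same convention as below \eqref{eq:proba}, that a fraction with vanishing denominator is set to zero. The key observation is that, whenever the denominator is positive, the numerator $a_{i, j} \, u_j$ is one of its nonnegative summands, so each surviving fraction lies in $[0, 1]$. Since the sum then lies in $[0, n]$, one gets the uniform lower bound $f_i(u) \geq -1$ along any trajectory staying in $S_I$. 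Integrating $d(\ln u_i)/dt = f_i(u(t))$ then yields $u_i(t) \geq u_i(0) \, e^{-t}$ for every $i \in I$, and because $u_i(0) > 0$ by hypothesis this forces $u_i(t) > 0$ for all $t > 0$. Combined with the first paragraph, this is exactly the statement that $\tilde S_I$ is positive invariant.

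The only subtle point is the potential discontinuity of $f_i$ at configurations where some denominator vanishes; but the cancellation convention inherited from \eqref{eq:proba} replaces the offending terms by zero, which removes only nonnegative contributions and therefore leaves the bound $f_i \geq -1$ untouched. I do not expect any further obstacle, since the argument is the standard one showing that positivity of a coordinate is preserved whenever the right-hand side of the ODE contains that coordinate as a factor.
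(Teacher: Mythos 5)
Your proposal is correct and follows essentially the same route as the paper: reduce to showing strict positivity of each coordinate via Lemma \ref{lem:pi}, bound the bracketed factor in \eqref{eq:mean-field-2} below by $-1$ using the nonnegativity of the fractions, and integrate to get $u_i(t) \geq u_i(0)\,e^{-t} > 0$. The extra care you take with the vanishing-denominator convention is a reasonable refinement but does not change the argument.
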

\begin{proof}
 In view of Lemma \ref{lem:pi}, it suffices to prove that $u_i (t) > 0$ whenever $u_i (0) > 0$.
 Recalling the expression of the derivatives, we have
 $$ \frac{du_i}{dt} \ = \ \bigg(\sum_{j = 1}^n \ \frac{a_{i, j} \,u_j}{a_{1, j} \,u_1 + \cdots + a_{n, j} \,u_n} \ - \ 1 \bigg) \ u_i \ \geq \ - u_i. $$
 Therefore, we have $u_i (t) \geq u_i (0) \,e^{-t} > 0$ whenever $u_i (0) > 0$.
\end{proof} \\ \\
 Our last preliminary result indicates that, if the ability of species $i$ is strictly larger than the ability of species $j$ to exploit
 resources of either type, then species $j$ is driven to extinction when starting with a positive density of each species.
 The intuition behind this result is that the fitness of species $i$, which is a function of the configuration of species, is always strictly
 larger than the fitness of species $j$ \emph{regardless} of the configuration of the system.
\begin{lemma}
\label{lem:extinction}
 Assume that $a_{i,k} > a_{j,k}$ for all $k = 1, 2, \ldots, n$. Then
 $$ \lim_{t \to \ \infty} \ u_j (t) \ = \ 0 \ \ \hbox{whenever} \ \ u (0) = (u_1 (0), \ldots, u_n (0)) \in \tilde S_n. $$
\end{lemma}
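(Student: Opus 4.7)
The natural quantity to track is the ratio $u_j/u_i$, or rather its logarithm, which should decay because species $i$ dominates species $j$ pointwise in the resource-exploitation matrix.

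First I would note that by Lemma \ref{lem:invariant} applied with $I = \{1,2,\ldots,n\}$, starting from $u(0) \in \tilde S_n$ we have $u_k(t) > 0$ for every $k$ and every $t \geq 0$. Combined with the standing assumption that each column of $M$ has at least one positive entry, this guarantees that the denominator $a_{1,k} u_1(t) + \cdots + a_{n,k} u_n(t)$ appearing in \eqref{eq:mean-field-2} is strictly positive for all $t$, so that $u_i$ and $u_j$ are smooth and strictly positive and one may differentiate their logarithms.

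Next I would use \eqref{eq:mean-field-2} to compute
$$ \frac{d}{dt} \log \frac{u_j}{u_i} \ = \ \frac{1}{u_j}\,\frac{du_j}{dt} - \frac{1}{u_i}\,\frac{du_i}{dt} \ = \ \sum_{k=1}^n \frac{(a_{j,k} - a_{i,k}) \,u_k}{a_{1,k} u_1 + \cdots + a_{n,k} u_n}. $$
By hypothesis, $a_{j,k} - a_{i,k} \leq -\ep$ for every $k$, where $\ep := \min_k (a_{i,k} - a_{j,k}) > 0$. Moreover, setting $A := \max_{l,k} a_{l,k}$, one has the elementary upper bound $a_{1,k} u_1 + \cdots + a_{n,k} u_n \leq A (u_1 + \cdots + u_n) = A$, which gives the pointwise inequality
$$ \sum_{k=1}^n \frac{u_k}{a_{1,k} u_1 + \cdots + a_{n,k} u_n} \ \geq \ \frac{1}{A} \sum_{k=1}^n u_k \ = \ \frac{1}{A}. $$
Combining these two estimates yields $\frac{d}{dt} \log (u_j/u_i) \leq -\ep/A$ for all $t \geq 0$, hence $u_j(t)/u_i(t) \leq (u_j(0)/u_i(0))\, e^{-\ep t/A}$. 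Since $u_i(t) \leq 1$, this forces $u_j(t) \to 0$ as $t \to \infty$, as desired.

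There is essentially no hard step here; the only potential pitfall is the well-posedness of the fractions one wishes to differentiate, which is why I would first invoke Lemma \ref{lem:invariant} explicitly. One could alternatively phrase the argument as a direct comparison of fitnesses without taking logarithms, but the logarithmic form makes the exponential decay transparent and avoids carrying around quotient rule terms.
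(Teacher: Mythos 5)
Your proof is correct and follows essentially the same route as the paper's: both track the ratio of the two densities (the paper differentiates $u_i/u_j$ directly, you differentiate $\log(u_j/u_i)$, which is cosmetically different), bound the resulting sum below by a positive constant using the pointwise dominance $a_{i,k}>a_{j,k}$ together with an upper bound on the denominators, and conclude exponential decay of $u_j/u_i$ and hence of $u_j$. The only inessential difference is that the paper normalizes column by column via $\Gamma_{i,j}=\min_k\{(a_{i,k}-a_{j,k})/\max_m a_{m,k}\}$ while you use the cruder global constant $\ep/A$; both are strictly positive, so the argument goes through either way.
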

\begin{proof}
 Since $\tilde S_n$ is positive invariant according to Lemma \ref{lem:invariant}, we have
 $$ \sum_{m = 1}^n \ u_m (t) \ = \ 1 \quad \hbox{and} \quad u_i (t) \in (0, 1) \ \hbox{for all} \ i = 1, 2, \ldots, n. $$
 Using in addition that each column of $M$ has at least one positive entry, we obtain
 $$ \sum_{m = 1}^n \ a_{m, k} \ u_m (t) \ > \ 0 \quad \hbox{and} \quad
    \min_{1 \leq m \leq n} \,\{a_{m,k} \} \ \leq \ \sum_{m = 1}^n \ a_{m, k} \ u_m (t) \ \leq \ \max_{1 \leq m \leq n} \,\{a_{m,k} \}. $$
 In particular, defining
 $$      F_{i,j} \ = \ \sum_{k = 1}^n \ \frac{(a_{i,k} - a_{j,k}) \,u_k}{a_{1,k} \,u_1 + \cdots + a_{n,k} \,u_n} \quad \hbox{and} \quad
    \Gamma_{i,j} \ = \ \min_{1 \leq k \leq n} \ \bigg\{\frac{a_{i,k} - a_{j,k}}{\max_{1 \leq m \leq n} \,\{a_{m,k} \}} \bigg\} $$
 and using that $a_{i,k} > a_{j,k}$ for all $k = 1, 2, \ldots, n$, we get
 $$ \frac{d}{dt} \,\bigg(\frac{u_i}{u_j} \bigg) \ = \
    \frac{u_i}{u_j} \ F_{i,j} \ \geq \
    \frac{u_i}{u_j} \ \bigg(\sum_{k = 1}^n  \ \frac{(a_{i,k} - a_{j,k}) \,u_k}{\max_{1 \leq m \leq n} \,\{a_{m,k} \}} \bigg) \ \geq \
    \frac{u_i}{u_j} \ \Gamma_{i,j}. $$
 Since $\Gamma_{i,j} > 0$, it follows that
 $$ \lim_{t \to \infty} \ u_j (t) \ \leq \ \frac{u_j (0)}{u_i (0)} \ \lim_{t \to \infty} \ \exp (- \Gamma_{i,j} t) \ = \ 0. $$
 This completes the proof.
\end{proof}

% % % % % % % % % % % % % % % % % % % % % % % % % % % % % % % % % % % % % % % % % % % % % % % % % % % % % % % % % % % % % % % % % % % % % 

\subsection*{The nonspatial two-type model.}
 Recall that in the presence of $n = 2$ species, there are only three possible types of interactions:
 cooperation, competition, and cheating.
 Interestingly, the nonspatial model also exhibits exactly three types of long-term behaviors which are directly connected to
 the type of relationship between the species.
 More precisely, we have the following results:
 cheating implies victory of the cheater, competition implies bistability, and cooperation implies coexistence.
 To prove these results, we first recall that
\begin{equation}
\label{eq:mean-field-2S}
\begin{array}{rcl}
 \displaystyle \frac{du_1}{dt} & = &
 \displaystyle \bigg(\frac{a_{1,2}}{a_{1,2} \,u_1 + a_{2,2} \,u_2} \ - \ \frac{a_{2,1}}{a_{1,1} \,u_1 + a_{2,1} \,u_2} \bigg) \ u_1 \,u_2 \vspace{8pt} \\
 \displaystyle \frac{du_2}{dt} & = &
 \displaystyle \bigg(\frac{a_{2,2}}{a_{1,2} \,u_1 + a_{2,2} \,u_2} \ - \ \frac{a_{1,2}}{a_{1,1} \,u_1 + a_{2,1} \,u_2} \bigg) \ u_1 \,u_2. \end{array}
\end{equation}
 By invoking the positive invariance of the set $S_2$, we obtain $u_2 = 1 - u_1$ so the model \eqref{eq:mean-field-2S} can be simply reduced to the
 one-dimensional ordinary differential equation
\begin{equation}
\label{eq:mean-field-1D}
 \frac{du_1}{dt} \ = \ \bigg(\frac{a_{1,2}}{a_{1,2} \,u_1 + a_{2,2} \,(1 - u_1)} \ - \ \frac{a_{2,1}}{a_{1,1} \,u_1 + a_{2,1} \,(1 - u_1)} \bigg) \ u_1 \,(1 - u_1).
\end{equation}
 By setting the right-hand side of \eqref{eq:mean-field-1D} equal to zero, we find three solutions for $u_1$, namely,
 the two trivial solutions $u_1 = 0$ and $u_1 = 1$, and the nontrivial solution
\begin{equation}
\label{eq:u1}
 \bar u_1 \ = \ \frac{a_{2,1} \,(a_{1,2} - a_{2,2})}{a_{1,2} \,(a_{2,1} - a_{1,1}) + a_{2,1} \,(a_{1,2} - a_{2,2})}.
\end{equation}
 To express the equilibria of \eqref{eq:mean-field-2S}, we also let
\begin{equation}
\label{eq:u2}
 \bar u_2 \ = \ 1 - \bar u_1 \ = \ \frac{a_{1,2} \,(a_{2,1} - a_{1,1})}{a_{1,2} \,(a_{2,1} - a_{1,1}) + a_{2,1} \,(a_{1,2} - a_{2,2})}
\end{equation}
 and observe that $e_{1,2} = (\bar u_1, \bar u_2) \in (0, 1)^2$ if and only if
\begin{equation}
\label{eq:conditions}
 (a_{1,1} < a_{2,1} \ \ \hbox{and} \ \ a_{2,2} < a_{1,2}) \quad \hbox{or} \quad (a_{1,1} > a_{2,1} \ \ \hbox{and} \ \ a_{2,2} > a_{1,2}).
\end{equation}
 Note that the first condition in \eqref{eq:conditions} indicates that both species cooperate whereas the second condition indicates that they compete.
 In conclusion, the nonspatial two-type model \eqref{eq:mean-field-2S} always has $e_1 = (1, 0)$ and $e_2 = (0, 1)$ as its two trivial equilibria.
 In the presence of either cooperation or competition, there is a third nontrivial equilibrium given by $e_{1,2} = (\bar u_1, \bar u_2)$.
 We now investigate the global stability of these three equilibria in details.
\begin{theor}[Global dynamics]
\label{th:global-2}
 For the mean-field model \eqref{eq:mean-field-2S}, we have
\begin{enumerate}
 \item Species 1 is a cheater:
  if $a_{1,1} > a_{2,1}$ and $a_{1,2} > a_{2,2}$ then $e_1$ is globally stable, $e_2$ is unstable, and there is no interior equilibrium. \vspace{4pt}
 \item Species 2 is a cheater:
  if $a_{1,1} < a_{2,1}$ and $a_{1,2} < a_{2,2}$ then $e_1$ is unstable, $e_2$ is globally stable, and there is no interior equilibrium. \vspace{4pt}
 \item Competition:
  if $a_{1,1} > a_{2,1}$ and $a_{2,2} > a_{1,2}$ then the interior equilibrium $e_{1,2}$ is unstable whereas the trivial equilibria $e_1$ and $e_2$ are
  locally asymptotically stable, and the system converges to $e_1$ whenever $u_1 (0) > \bar u_1$ and to $e_2$ whenever $u_1 (0) < \bar u_1$. \vspace{4pt}
 \item Cooperation:
  if $a_{1,1} < a_{2,1}$ and $a_{2,2} < a_{1,2}$ then the interior equilibrium $e_{1,2}$ is globally stable whereas the trivial equilibria $e_1$ and
  $e_2$ are unstable.
\end{enumerate}
\end{theor}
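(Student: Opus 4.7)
The plan is to exploit the positive invariance of $S_2$ from Lemma \ref{lem:pi} to reduce the system to the scalar ODE \eqref{eq:mean-field-1D}, written as $du_1/dt = \phi(u_1)\, u_1 (1 - u_1)$, where
$$ \phi(u_1) \ = \ \frac{a_{1,2}}{a_{1,2} \,u_1 + a_{2,2} \,(1 - u_1)} \ - \ \frac{a_{2,1}}{a_{1,1} \,u_1 + a_{2,1} \,(1 - u_1)}. $$
Since each column of $M$ has at least one positive entry and $u_1 \in (0, 1)$, both denominators above are strictly positive, so $\sign \phi(u_1) = \sign L(u_1)$, where
$$ L(u_1) \ = \ a_{1,2} \,(a_{1,1} - a_{2,1}) \,u_1 \ + \ a_{2,1} \,(a_{1,2} - a_{2,2})\,(1 - u_1) $$
is affine in $u_1$, with $L(0) = a_{2,1}(a_{1,2} - a_{2,2})$ and $L(1) = a_{1,2}(a_{1,1} - a_{2,1})$, and whose unique zero in $\R$ is precisely $\bar u_1$ as given by \eqref{eq:u1}.

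The four cases of the theorem then reduce to a sign analysis of $L$ at the endpoints. In the cheating cases~(1) and~(2), $L(0)$ and $L(1)$ share a common strict sign, so $du_1/dt$ has constant sign throughout $(0,1)$ and the unique interior trajectory is monotone; hence $u_1(t) \to 1$ in case~(1) and $u_1(t) \to 0$ in case~(2), the opposite endpoint being unstable because nearby interior trajectories are pushed away from it. In case~(3), one has $L(0) < 0 < L(1)$, so the ODE is bistable and $\bar u_1$ separates the basins of attraction: for $u_1(0) > \bar u_1$ we have $du_1/dt > 0$ and $u_1(t)$ increases to $1$, while for $u_1(0) < \bar u_1$ it decreases to $0$. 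In case~(4), $L(0) > 0 > L(1)$, so $\phi$ is positive on $(0, \bar u_1)$ and negative on $(\bar u_1, 1)$; every interior trajectory is then attracted to $\bar u_1$ while both endpoints are repelling.

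I expect no serious obstacle here. The one-dimensional reduction trivializes the dynamics once the sign of $\phi$ is identified, and cases~(1) and~(2) can alternatively be read off directly from Lemma \ref{lem:extinction} with $(i,j) = (1,2)$ and $(2,1)$ respectively, which gives $u_j(t) \to 0$ from any point of $\tilde S_2$. The only mild care needed is to verify the positivity of the denominators appearing in $\phi$, which follows from the standing assumption on the columns of $M$ together with $u_1 \in (0,1)$; the local stability statements at the endpoints then follow at once from the one-sided sign of $du_1/dt$, without any linearization argument.
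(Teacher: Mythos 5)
Your proposal is correct. The reduction to the scalar equation \eqref{eq:mean-field-1D} via the positive invariance of $S_2$ is exactly the paper's starting point, and your observation that cases (1)--(2) follow from Lemma \ref{lem:extinction} is precisely how the paper handles them. Where you diverge is in cases (3)--(4): the paper linearizes, computing $h'(0)$, $h'(1)$ and $h'(\bar u_1)$ explicitly and reading off local stability from their signs before upgrading to the global statements, whereas you put $\phi$ over a common (positive) denominator and do a global sign analysis of the affine numerator $L$, whose endpoint values and unique zero $\bar u_1$ determine the phase line on all of $(0,1)$ at once. Your route is more elementary and arguably cleaner: it needs no differentiation, it delivers the global convergence claims directly rather than inferring them from local stability plus the absence of other equilibria, and it sidesteps the somewhat messy expression for $h'(\bar u_1)$. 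What the paper's linearization buys in exchange is that the quantities $h'(0)=(a_{1,2}-a_{2,2})/a_{2,2}$ and $h'(1)=(a_{2,1}-a_{1,1})/a_{1,1}$ are exactly the transversal eigenvalues reused in the three-type analysis (Theorems \ref{th:local-trivial} and \ref{th:heteroclinic}), so the derivative computations are not wasted. One minor imprecision on your side: in cases (1)--(2) the endpoint values of $L$ need not both be strictly signed when a coefficient such as $a_{2,1}$ vanishes (only nonnegativity of the entries is assumed); the conclusion still holds because $L$ is affine and at least one endpoint value is strictly signed, but the phrase ``common strict sign'' should be softened accordingly --- this is at the same level of looseness as the paper's own division by $a_{1,1}$ and $a_{2,2}$.
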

\begin{proof}
 The first two statements follow directly from Lemma \ref{lem:extinction} and the fact that none of the two conditions in \eqref{eq:conditions} holds.
 To prove the last two statements, we first observe that \eqref{eq:conditions} holds in both cases, therefore we have $e_{1,2} \in (0, 1)^2$ and the
 global stability of the equilibria can be determined by looking at the global stability of their analog for the system \eqref{eq:mean-field-1D}.
 Now, letting
 $$ h (u_1) \ = \ \bigg(\frac{a_{1,2}}{a_{1,2} \,u_1 + a_{2,2} \,(1 - u_1)} \ - \ \frac{a_{2,1}}{a_{1,1} \,u_1 + a_{2,1} \,(1 - u_1)} \bigg) \ u_1 \,(1 - u_1), $$
 the stability of $u_1 \in \{0, 1, \bar u_1 \}$ are determined by the sign of
 $$ \begin{array}{rcl}
     h' (0) & = & \displaystyle \frac{a_{1,2} - a_{2,2}}{a_{2,2}} \qquad h' (1) \ = \ \displaystyle \frac{a_{2,1} - a_{1,1}}{a_{1,1}} \qquad \hbox{and} \vspace{8pt} \\
     h' (\bar u_1) & = & \displaystyle
    \frac{(a_{2,1} - a_{1,1}) \,(a_{1,2} - a_{2,2}) \,[a_{1,2} \,(a_{1,1} - a_{2,1}) + a_{2,1} \,(a_{2,2} - a_{1,2})]}{(a_{1,1} \,a_{2,2} - a_{1,2} \,a_{2,1})^2} \end{array} $$
 respectively.
 It is then straightforward to conclude that
\begin{enumerate}
\item When $a_{1,1} > a_{2,1}$ and $a_{2,2} > a_{1,2}$, the trivial equilibria 0 and 1 are locally stable while the nontrivial equilibrium $\bar u_1$ is unstable.
 Returning to the mean-field model \eqref{eq:mean-field-2S} and using that the set $S_2$ is positive invariant, we deduce that
 $$ \lim_{t \to \infty} \ (u_1 (t), u_2 (t)) \ = \
    \left\{\hspace{-3pt} \begin{array}{lll} e_1 & \hbox{when} & u_1 (0) > \bar u_1 \vspace{2pt} \\ e_2 & \hbox{when} & u_1 (0) < \bar u_1. \end{array} \right. $$
\item When $a_{1,1} < a_{2,1}$ and $a_{2,2} < a_{1,2}$, the trivial equilibria 0 and 1 are unstable while the nontrivial equilibrium $\bar u_1$ is stable.
 Returning to the mean-field model \eqref{eq:mean-field-2S} and using that $S_2$ is positive invariant, we deduce that for any initial condition in $\tilde S_2$ we have
 $$ \lim_{t \to \infty} \ u_1 (t) \ = \ \bar u_1 \quad \hbox{and} \quad \lim_{t \to \infty} \ u_2 (t) \ = \ \bar u_2. $$
\end{enumerate}
 This completes the proof of the theorem.
\end{proof} \\ \\
 Note that the results of Theorem \ref{th:global-2} can simply be expressed in terms of the relative ability of each species to exploit the
 resource it produces, defined as
\begin{equation}
\label{eq:theta}
 \theta_1 \ = \ \frac{a_{1,1}}{a_{1,1} + a_{2,1}} \qquad \hbox{and} \qquad \theta_2 \ = \ \frac{a_{2,2}}{a_{1,2} + a_{2,2}}
\end{equation}
 since species $i$ is a cooperator whenever $\theta_i < 1/2$ and a defector whenever $\theta_i > 1/2$.
 For a summary of the results of Theorem \ref{th:global-2}, we refer the reader to the right-hand picture of Figure \ref{fig:two-type}
 where the phase diagram of the nonspatial two-type model is represented in the $\theta_1$-$\theta_2$ plane.

% % % % % % % % % % % % % % % % % % % % % % % % % % % % % % % % % % % % % % % % % % % % % % % % % % % % % % % % % % % % % % % % % % % % % 

\subsection*{The nonspatial three-type model.}
 Note that in the presence of three species, a species cannot simply be classified as a cooperator or a defector since it can behave
 as a cooperator with respect to one species but as a defector with respect to another species.
 This aspect gives rise to very rich dynamics.
 In particular, while cooperation is the only mechanism that leads to coexistence in communities with two species, there are additional
 mechanisms that promote coexistence when a third species is introduced.
 In this subsection, we first look at the local stability of the trivial boundary equilibria, and the nontrivial boundary equilibria when
 they exist.
 Based on the analysis of the local stability of the boundary equilibria, we conclude the subsection with general conditions
 for global coexistence of all three species.
 These analytical results will be used later as a guide to perform numerical simulations of the spatial model in specific parameter regions in
 order to understand the role of space on the global dynamics.
 For simplicity and to fix the ideas, we state some of our results assuming that some species play particular roles in the interactions, but
 additional results can be deduced by replacing $(1, 2, 3)$ with $(\sigma (1), \sigma (2), \sigma (3))$ where $\sigma$ is any of the
 six permutations of the group $\mathfrak S_3$.
 To start our analysis of the nonspatial three-type model, we recall that the mean-field model with three species is given by
\begin{equation}
\label{eq:mean-field-3S}
  \begin{array}{l}
    \displaystyle \frac{du_i}{dt} \ = \ F_i (u) \ = \ \bigg[
    \displaystyle \frac{a_{i,1} \,u_1}{a_{1,1} \,u_1 + a_{2,1} \,u_2 + a_{3,1} \,u_3} \ + \
    \displaystyle \frac{a_{i,2} \,u_2}{a_{1,2} \,u_1 + a_{2,2} \,u_2 + a_{3,2} \,u_3} \vspace{8pt} \\ \hspace{160pt} + \
    \displaystyle \frac{a_{i,3} \,u_3}{a_{1,3} \,u_1 + a_{2,3} \,u_2 + a_{3,3} \,u_3} \ - \ 1 \,\bigg] \ u_i \ = \ G_i (u) \,u_i \end{array} 
\end{equation}
 where $i = 1, 2, 3$.
 Note that, since the set $S_3$ is positive invariant, the mean-field model \eqref{eq:mean-field-3S} can be reduced, by letting
 $u_3 = 1 - u_1 - u_2$, to the following two-dimensional system:
\begin{equation}
\label{eq:mean-field-2D}
  \begin{array}{l}
    \displaystyle \frac{du_i}{dt} \ = \ F_i (u) \ = \ \bigg[
    \displaystyle \frac{a_{i,1} \,u_1}{a_{1,1} \,u_1 + a_{2,1} \,u_2 + a_{3,1} \,\left(1- u_1-u_2\right)} \vspace{8pt} \\ \hspace{100pt} + \
    \displaystyle \frac{a_{i,2} \,u_2}{a_{1,2} \,u_1 + a_{2,2} \,u_2 + a_{3,2} \,\left(1- u_1-u_2\right)} \vspace{8pt} \\ \hspace{120pt} + \
    \displaystyle \frac{a_{i,3} \,\left(1- u_1-u_2\right)}{a_{1,3} \,u_1 + a_{2,3} \,u_2 + a_{3,3} \,\left(1- u_1-u_2\right)} \ - \ 1 \,\bigg] \ u_i \ = \ G_i (u) \,u_i \end{array}
\end{equation}
 where $i = 1, 2$.
 Note also that the analysis of the two-type nonspatial model in the previous subsection directly gives the expression of the boundary
 equilibria of the three-type nonspatial model as well as the conditions under which these equilibria indeed exist.
 There are always three trivial boundary equilibria given by
 $$ e_1 = (1, 0, 0) \qquad e_2 = (0, 1, 0) \qquad \hbox{and} \qquad e_3 = (0, 0, 1) $$
 respectively.
 By setting the right-hand side of the three equations in \eqref{eq:mean-field-3S} equal to zero, we find three additional solutions, namely
 $$ e_{1,2} = (\bar u_{1,2}, \bar u_{2,1}, 0) \qquad e_{2,3} = (0, \bar u_{2,3}, \bar u_{3,2}) \qquad \hbox{and} \qquad e_{3,1} = (\bar u_{1,3}, 0, \bar u_{3,1}) $$
 where for all $i \neq j$, the density $\bar u_{i,j}$ is given by
 $$ \bar u_{i,j} \ = \ \frac{a_{j,i} \,(a_{i,j} - a_{j,j})}{a_{i,j} \,(a_{j,i} - a_{i,i}) + a_{j,i} \,(a_{i,j} - a_{j,j})}. $$
 According to \eqref{eq:conditions}, for $j \equiv i + 1 \ \hbox{mod} \ 3$, the solution $e_{i,j}$ is indeed a nontrivial boundary equilibrium
 in the sense that both densities $\bar u_{i,j}$ and $\bar u_{j,i}$ lie in the interval $(0, 1)$ if and only if
 $$ (a_{i,i} < a_{j,i} \ \ \hbox{and} \ \ a_{j,j} < a_{i,j}) \quad \hbox{or} \quad (a_{i,i} > a_{j,i} \ \ \hbox{and} \ \ a_{j,j} > a_{i,j}). $$
 The next theorem shows the following connection between the three-type and two-type nonspatial models:
 when one species, say species 1, has a strictly higher ability than another species, say species 3, to exploit resources of either type,
 species 3 goes extinct and species 1 and 2 interact as described by the two-type model \eqref{eq:mean-field-2S}, just as in the absence of type 3.
\begin{theor}
\label{th:global-trivial}
 Assume that $a_{1,k} > a_{3,k}$ for all $k = 1, 2, 3.$
 Then for any initial condition taken in the positive invariant set $\tilde S_3$ we have the following alternative.
\begin{enumerate}
 \item Species 1 is a cheater: if $a_{1,1} > a_{2,1}$ and $a_{1,2} > a_{2,2}$ then the trajectory converges to $e_1$. \vspace{4pt}
 \item Species 2 is a cheater: if $a_{1,1} < a_{2,1}$ and $a_{1,2} < a_{2,2}$ then the trajectory converges to $e_2$. \vspace{4pt}
 \item Competition: if $a_{1,1} > a_{2,1}$ and $a_{2,2} > a_{1,2}$ then the trajectory of \eqref{eq:mean-field-3S} converges to either $e_1$ or $e_2$
  depending on the initial condition. \vspace{4pt} 
 \item Cooperation: if $a_{1,1} < a_{2,1}$ and $a_{2,2} < a_{1,2}$ then the trajectory of \eqref{eq:mean-field-3S} converges to $e_{1,2}$.
\end{enumerate}
\end{theor}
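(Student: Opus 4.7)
The plan is to reduce the three-species problem to the two-species problem of Theorem \ref{th:global-2}. The hypothesis $a_{1,k}>a_{3,k}$ for all $k$ is exactly what Lemma \ref{lem:extinction} requires with $i=1$ and $j=3$, so applying that lemma immediately gives $u_3(t)\to 0$ as $t\to\infty$ for every initial condition in $\tilde S_3$, and consequently $u_1(t)+u_2(t)\to 1$. Thus the trajectory is attracted to the face $\{u_3=0\}$, on which \eqref{eq:mean-field-3S} reduces algebraically to the two-type model \eqref{eq:mean-field-2S} (using only the submatrix of $M$ indexed by $i,j\in\{1,2\}$). The four hypothesis cases of the present theorem then match, case by case, the four cases of Theorem \ref{th:global-2}.

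To transfer convergence from the limit system back to the three-type trajectory, I would invoke a standard asymptotically autonomous result of Thieme type: since $u_3(t)\to 0$, the planar system driving $(u_1,u_2)$ is a time-dependent perturbation of \eqref{eq:mean-field-2S} with forcing that vanishes as $t\to\infty$. In Cases 1, 2 and 4, Theorem \ref{th:global-2} provides a unique globally attracting equilibrium on the face ($e_1$, $e_2$ or $e_{1,2}$ respectively), and the transfer theorem carries convergence directly to the three-type system. Case 3 is the delicate one: the limit system is bistable with $e_1$ and $e_2$ locally attracting and $e_{1,2}$ an interior saddle. The sign computations of $h'(0)$, $h'(1)$ and $h'(\bar u_1)$ already performed in the proof of Theorem \ref{th:global-2} make $e_{1,2}$ hyperbolic, so its stable manifold in the reduced plane is a codimension-one curve; trajectories off this curve fall into one of the two basins and inherit convergence to either $e_1$ or $e_2$ depending on initial data.

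The main obstacle is precisely Case 3: as $u_3$ decays, one must rule out that the three-type trajectory shadows the separatrix of the limit system for arbitrarily long times. Hyperbolicity of $e_{1,2}$ together with the stable manifold theorem is what accomplishes this, since any $\omega$-limit point in $\tilde S_{\{1,2\}}$ must belong to the limit system's asymptotic invariant set, and that set is just $\{e_1,e_2\}$ off the codimension-one separatrix. The other three cases require only the much softer statement that a limit system with a unique global attractor passes its convergence to any asymptotically autonomous perturbation.
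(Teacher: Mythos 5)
Your opening move is exactly the paper's: Lemma \ref{lem:extinction} with $(i,j)=(1,3)$ gives $u_3(t)\to 0$, so the orbit is asymptotic to the edge $\{u_3=0\}$, where the dynamics are those of the two-type model \eqref{eq:mean-field-2S}. The gap is in your transfer step. The ``much softer statement'' you lean on for Cases 1, 2 and 4 --- that a limit system with a unique globally attracting equilibrium passes its convergence to any asymptotically autonomous perturbation --- is not a theorem. The Markus--Thieme results only guarantee that the $\omega$-limit set of the perturbed orbit is a compact, connected, invariant, chain-recurrent set of the limit flow; on the edge that chain-recurrent set always contains \emph{both} vertices $e_1$ and $e_2$ (and $e_{1,2}$ in Case 4), so in Case 1 the theory by itself does not exclude $\omega=\{e_2\}$, and there are standard examples of asymptotically autonomous systems converging to an unstable rest point of their limit system. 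To close this you must separately rule out the unstable vertices, e.g.\ by noting that their stable manifolds lie in the invariant faces $\{u_1=0\}$ or $\{u_2=0\}$, which an orbit started in $\tilde S_3$ never reaches by Lemma \ref{lem:invariant}. The paper sidesteps the whole issue with a direct estimate: once $u_3<\ep$, it shows
$$\frac{d}{dt}\,\bigg(\frac{u_1}{u_2}\bigg) \ \geq \ \frac{u_1}{u_2}\,\bigg(\min_{k=1,2}\bigg\{\frac{(a_{1,k}-a_{2,k})(1-\ep)}{\max_{1\leq m\leq 3}\{a_{m,k}\}}\bigg\} \ - \ \frac{|a_{1,3}-a_{2,3}|\,\ep}{(1-\ep)\min\{a_{1,3},a_{2,3}\}}\bigg),$$
so the ratio grows exponentially and $u_2\to 0$ outright, with no limiting-system machinery.

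Your Case 3 also takes place in the wrong space. The limit system lives on the one-dimensional edge $\{u_3=0\}\cap S_3$, where $e_{1,2}$ is just an unstable rest point of a scalar flow and has no codimension-one separatrix; the separatrix you describe belongs to the full planar system \eqref{eq:mean-field-2D}, and once you argue there you are no longer doing an asymptotically autonomous argument but a global phase-portrait analysis (which then requires excluding periodic orbits and other limit sets). Moreover, under the standing hypothesis $a_{1,k}>a_{3,k}$ one has $G_3(e_{1,2})<G_1(e_{1,2})=0$ by comparing the two sums term by term, so the transversal eigenvalue at $e_{1,2}$ is negative: in the competitive case $e_{1,2}$ is a genuine saddle of the planar flow whose one-dimensional stable manifold enters $\tilde S_3$, and orbits on that curve converge to $e_{1,2}$ rather than to $e_1$ or $e_2$. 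Your proposal implicitly concedes this by restricting to ``trajectories off this curve,'' but the dichotomy must then be stated with that exceptional set excluded, and the exclusion of the unstable vertices in the remaining cases must be supplied; as written, only Cases 1 and 2 are within reach, and only modulo that last step.
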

\begin{proof}
 In view of Lemma \ref{lem:extinction} and the fact that the set $S_3$ is positive invariant, for any $\ep > 0$ there exists a time $T > 0$ that only
 depends on $\ep$ and the initial condition such that
 $$ u_3 (t) < \ep \quad \hbox{and} \quad u_1 (t) + u_2 (t) > 1 - \ep \quad \hbox{for all} \ t > T. $$
 In other words, any trajectory starting in $\tilde S_3$ is trapped after a finite time into the set
 $$ S_3^{\ep} \ = \ \{(u_1, u_2, u_3) \in [0, 1]^3 : u_1 + u_2 + u_3 = 1 \ \hbox{and} \ u_3 \leq \ep \}. $$
 The main idea of the proof is to use the fact that after entering the set $S_3^{\ep}$ the three-type system behaves similarly to the two-type
 system \eqref{eq:mean-field-2S} when the parameter $\ep$ is sufficiently small, and then to apply Theorem \ref{th:global-2}.
 Since all four cases can be treated similarly, we only provide the details of the proof for the first case.
 By applying the chain rule, we obtain
 $$ \begin{array}{rcl}
    \displaystyle \frac{d}{dt} \,\bigg(\frac{u_1}{u_2} \bigg) & = &
    \displaystyle \frac{u_1}{u_2} \ \sum_{k = 1}^3 \ \frac{(a_{1,k} - a_{2,k}) \,u_k}{a_{1,k} \,u_1 + a_{2,k} \,u_2 + a_{3,k} \,u_3} \vspace{8pt} \\ & \geq &
    \displaystyle \frac{u_1}{u_2} \ \bigg(\sum_{k = 1}^2 \ \frac{(a_{1,k} - a_{2,k}) \,u_k}{\max_{1 \leq m \leq 3} \{a_{m,k} \}} \ + \
    \frac{(a_{1,3} - a_{2,3}) \,u_3}{a_{1,3} \,u_1 + a_{2,3} \,u_2 + a_{3,3} \,u_3} \bigg). \end{array} $$
 Then, we distinguish two cases.
\begin{enumerate}
 \item If $a_{1,3} > a_{2,3}$ then Lemma \ref{lem:extinction} implies that $\lim_{t \to \infty} u_2 = 0$. \vspace{4pt}
 \item If $a_{1,3} < a_{2,3}$ then, using that $a_{1,k} > 0$ for all $k = 1, 2, 3$, we obtain 
 $$ a_{1,3} \,u_1 (t) + a_{2,3} \,u_2 (t) + a_{3,3} \,u_3 (t) \ \geq \ a_{1,3} \,u_1 (t) + a_{2,3} \,u_2 (t) \ > \ (1 - \epsilon) \,\min \{a_{1,3}, a_{2,3} \} $$
 for all times $t > T$.
 Therefore, we have
 $$ \frac{d}{dt} \,\bigg(\frac{u_1}{u_2} \bigg) \ \geq \
    \frac{u_1}{u_2} \ \bigg(\min_{k = 1, 2} \bigg\{\frac{(a_{1,k} - a_{2,k}) \,(1 - \ep)}{\max_{1 \leq m \leq 3} \,\{a_{m,k} \}} \bigg\} \ + \
    \frac{(a_{1,3} - a_{2,3}) \,\epsilon}{(1 - \ep) \,\min \,\{a_{1,3}, a_{2,3} \}} \bigg). $$
 Since $\ep$ can be arbitrary small and $a_{1,k} - a_{2,k} > 0$ for $k = 1, 2$, we have $\lim_{t \to \infty} u_2 = 0$.
\end{enumerate}
 In both cases, the trajectory converges to $e_1$.
 This completes the proof.
\end{proof} \\ \\
 To identify general conditions for tristability on the one hand, and coexistence of all three species on the
 other hand, we now look at the local stability of the trivial boundary equilibria.
\begin{theor}[Local stability of $e_1$]
\label{th:local-trivial}
 We have the following alternative:
\begin{enumerate}
 \item If $a_{1,1} > a_{2,1}$ and $a_{1,1} > a_{3,1}$ then $e_1$ is a sink, i.e., $e_1$ is locally asymptotically stable. \vspace{2pt}
 \item If $a_{1,1} < a_{2,1}$ and $a_{1,1} < a_{3,1}$ then $e_1$ is a source. \vspace{2pt}
 \item If $a_{2,1} < a_{1,1} < a_{3,1}$ or $a_{2,1} > a_{1,1} > a_{3,1}$ then $e_1$ is a saddle node.
\end{enumerate}
\end{theor}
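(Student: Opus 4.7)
The plan is to linearize the reduced two-dimensional system \eqref{eq:mean-field-2D} at the equilibrium $e_1 = (1, 0)$ (so that $u_3 = 1 - u_1 - u_2 = 0$) and classify $e_1$ from the eigenvalues of the Jacobian. Since $S_3$ is positive invariant by Lemma \ref{lem:pi}, the hyperbolic analysis in the $(u_1, u_2)$-plane captures the true local dynamics inside the state space.

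Writing $F_i = G_i \, u_i$ for $i = 1, 2$, we have $\partial F_i / \partial u_j = G_i \,\delta_{i,j} + u_i \, \partial G_i/\partial u_j$. Evaluated at $e_1$, one checks by direct substitution that $G_1(e_1) = 0$ (since $e_1$ is an equilibrium) and $G_2(e_1) = (a_{2,1} - a_{1,1})/a_{1,1}$. Moreover, because $u_2 = 0$ at $e_1$, the entry $\partial F_2/\partial u_1$ vanishes there. Hence the Jacobian at $e_1$ is upper triangular, and its two eigenvalues are simply $\partial G_1/\partial u_1 \big|_{e_1}$ and $G_2(e_1)$.

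The only remaining calculation is $\partial G_1/\partial u_1|_{e_1}$. Differentiating the three fractions composing $G_1$ and evaluating at $(u_1, u_2, u_3) = (1, 0, 0)$: the resource-$1$ term contributes $a_{3,1}/a_{1,1}$ (its numerator has $u_1$-derivative $a_{1,1}$ and equals $a_{1,1}$ at $e_1$, while its denominator has $u_1$-derivative $a_{1,1} - a_{3,1}$ and equals $a_{1,1}$); the resource-$2$ term contributes $0$ (the numerator $a_{1,2} u_2$ and its $u_1$-derivative both vanish at $e_1$); and the resource-$3$ term contributes $-1$ (its numerator vanishes but has $u_1$-derivative $-a_{1,3}$, while the denominator equals $a_{1,3}$). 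Summing the three pieces gives $\partial G_1/\partial u_1|_{e_1} = (a_{3,1} - a_{1,1})/a_{1,1}$, so the eigenvalues of the Jacobian at $e_1$ are $\lambda_1 = (a_{3,1} - a_{1,1})/a_{1,1}$ and $\lambda_2 = (a_{2,1} - a_{1,1})/a_{1,1}$. The three items of the theorem correspond exactly to the sign patterns $\lambda_1, \lambda_2 < 0$, $\lambda_1, \lambda_2 > 0$, and $\lambda_1 \lambda_2 < 0$.

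There is no real obstacle: the proof reduces to a direct Jacobian computation. The one observation that keeps it painless is the upper-triangular structure of $J$ at $e_1$, which spares us from evaluating the off-diagonal derivatives $\partial G_1/\partial u_2|_{e_1}$ and $\partial G_2/\partial u_1|_{e_1}$ and makes the local classification of $e_1$ depend only on the first column of the matrix $M$, namely on the entries $a_{1,1}, a_{2,1}, a_{3,1}$ that govern the exploitation of resource $1$.
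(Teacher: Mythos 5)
Your proof is correct and follows essentially the same route as the paper: linearize at $e_1$ and read off the two eigenvalues $(a_{2,1}-a_{1,1})/a_{1,1}$ and $(a_{3,1}-a_{1,1})/a_{1,1}$, whose sign pattern gives the three cases. The only difference is that you work with the reduced two-dimensional system \eqref{eq:mean-field-2D}, which spares you the zero eigenvalue with eigenvector $(1,0,0)$ (transverse to the simplex) appearing in the paper's $3\times 3$ Jacobian of \eqref{eq:mean-field-3S}; the two nontrivial eigenvalues and the resulting classification agree.
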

\begin{proof}
 The Jacobian matrix of the system \eqref{eq:mean-field-3S} evaluated at equilibrium $e_1$ is given by
\begin{equation}
\label{Jacobian_e1}
 J_{e_1} \ = \ \left(\begin{array}{ccc}
     0 & \displaystyle \frac{a_{1,1} - a_{2,1}}{a_{1,1}} & \displaystyle \frac{a_{1,1} - a_{3,1}}{a_{1,1}} \vspace{6pt} \\
     0 & \displaystyle \frac{a_{2,1} - a_{1,1}}{a_{1,1}} & 0 \vspace{6pt} \\
     0 & 0 & \displaystyle \frac{a_{3,1} - a_{1,1}}{a_{1,1}} \end{array} \right).
\end{equation}
 The eigenvalues of \eqref{Jacobian_e1} are given by
 $$ \lambda_1 = 0 \qquad \lambda_2 = \frac{a_{2,1} - a_{1,1}}{a_{1,1}} \qquad \lambda_3 = \frac{a_{3,1} - a_{1,1}}{a_{1,1}} $$
 with respective eigenspaces
 $$ E_{\lambda_1} = \Span ((1, 0, 0)) \qquad E_{\lambda_2} = \Span ((-1, 1, 0)) \qquad E_{\lambda_3} = \Span ((-1, 0, 1)). $$
 In particular, for $j = 2, 3$, species $j$ can invade species 1 in the invariant manifold $u_1 + u_j = 1$ whenever $a_{1,1} < a_{j,1}$
 while it cannot whenever $a_{1,1} > a_{j,1}$.
 The three statements follow.
\end{proof} \\ \\
 From Theorem \ref{th:local-trivial}, one deduces immediately a general condition under which the nonspatial three-type system
 is tristable, which is given by Corollary \ref{cor:tristability} below.
 More importantly, this theorem allows to identify situations in which there is strong coexistence of all three species in terms of
 permanence of the system, whereas any two species cannot coexist in the absence of the third one.
 This indicates in particular that, in the presence of three species, coexistence is possible even in the absence of global cooperation,
 thus the existence of additional mechanisms that promote coexistence while the number of species increases.
 A more detailed discussion about this important result, which is stated below in Theorem \ref{th:heteroclinic}, is given at the end
 of this section.
\begin{corol}[Tristability]
\label{cor:tristability}
 All three trivial boundary equilibria $e_1$, $e_2$ and $e_3$ are simultaneously locally asymptotically stable
 whenever $a_{i,i} > a_{j,i}$ for all $i \neq j$.
\end{corol}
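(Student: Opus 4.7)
The plan is to derive this as an immediate consequence of Theorem \ref{th:local-trivial} applied to each of the three equilibria in turn. Theorem \ref{th:local-trivial} gives the local stability of $e_1$ in terms of the diagonal coefficient $a_{1,1}$ compared with the off-diagonal entries $a_{2,1}$ and $a_{3,1}$ in the first column of $M$, establishing that $e_1$ is a sink precisely when $a_{1,1}$ dominates both $a_{2,1}$ and $a_{3,1}$.

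First, I would observe that the argument proving Theorem \ref{th:local-trivial} uses nothing specific about the label 1: it computes the Jacobian at the equilibrium where a single species is at density one and reads off the eigenvalues from the first column of $M$. By relabeling species, the same computation yields the analogous statements for $e_2$ and $e_3$: the equilibrium $e_i$ is locally asymptotically stable if and only if $a_{i,i} > a_{j,i}$ for every $j \neq i$. Formally, one applies Theorem \ref{th:local-trivial} to the system obtained by the permutation $\sigma \in \mathfrak{S}_3$ sending $1 \mapsto i$, whose dynamics are conjugate to \eqref{eq:mean-field-3S}.

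Then I would simply combine the three statements. The hypothesis $a_{i,i} > a_{j,i}$ for all $i \neq j$ is exactly the conjunction of the three sink conditions, one per equilibrium, so all three of $e_1$, $e_2$, $e_3$ are locally asymptotically stable simultaneously.

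There is no real obstacle here; the corollary is essentially a bookkeeping consequence of Theorem \ref{th:local-trivial} together with the symmetry of the model under relabeling of species. The only thing worth being careful about is making the symmetry argument explicit so that the reader sees that the three sink conditions are independent constraints, each depending only on one column of the matrix $M$.
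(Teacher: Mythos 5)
Your proposal is correct and follows essentially the same route as the paper, which simply cites Theorem \ref{th:local-trivial} and relies on the permutation symmetry of the species labels (stated earlier in the section) to cover $e_2$ and $e_3$. Your version merely makes that relabeling step explicit, which is a harmless elaboration rather than a different argument.
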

\begin{proof}
 This follows directly from Theorem \ref{th:local-trivial}.
\end{proof} \\ \\
 Numerical simulations further indicate that, under the conditions of Corollary \ref{cor:tristability}, for almost all initial
 conditions, i.e., all initial conditions excluding the ones that belong to a set with Lebesgue measure zero, 
 the system \eqref{eq:mean-field-3S} converges to one of the three trivial boundary equilibria.
 The limiting behavior of the system depends on both the parameters and the initial condition.

\begin{theor}[Heteroclinic cycle]
\label{th:heteroclinic} If the following inequalities hold
\begin{equation}
\label{hc}
 a_{3,1} < a_{1,1} < a_{2,1} \hspace{25pt}
 a_{1,2} < a_{2,2} < a_{3,2} \hspace{25pt}
 a_{2,3} < a_{3,3} < a_{1,3}
\end{equation}
 then there is a heteroclinic cycle connecting $e_1 \to e_2 \to e_3 \to e_1$.
 Moreover, the heteroclinic cycle is locally asymptotically stable whenever
\begin{equation}
\label{hc_s}
 a_{1,1} > \frac{a_{2,1} + a_{3,1}}{2} \qquad
 a_{2,2} > \frac{a_{1,2} + a_{3,2}}{2} \qquad
 a_{3,3} > \frac{a_{1,3} + a_{2,3}}{2}
\end{equation}
 while the heteroclinic cycle is repelling whenever
\begin{equation}
\label{hc_u}
 a_{1,1} < \frac{a_{2,1} + a_{3,1}}{2} \qquad
 a_{2,2} < \frac{a_{1,2} + a_{3,2}}{2} \qquad
 a_{3,3} < \frac{a_{1,3} + a_{2,3}}{2}
\end{equation}
 which implies that the system \eqref{eq:mean-field-3S} is permanent, i.e., there exists a compact interior attractor that attracts all
 points in the positive invariant set $\tilde S_3$.
\end{theor}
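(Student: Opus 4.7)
The plan is to reduce the boundary dynamics of the three-type system to the two-type analysis of Theorem \ref{th:global-2}, and then to study the transverse stability of the resulting cycle by combining the local eigenvalue analysis of Theorem \ref{th:local-trivial} with an average Lyapunov function argument. The first step is existence of the heteroclinic cycle. On each boundary face of $S_3$ exactly one species is absent and the remaining dynamics is governed by \eqref{eq:mean-field-2S}. On the face $\{u_3 = 0\}$, the inequalities $a_{1,1} < a_{2,1}$ and $a_{1,2} < a_{2,2}$ provided by \eqref{hc} are exactly case 2 of Theorem \ref{th:global-2}, so species 2 cheats species 1 and $\tilde S_{\{1,2\}}$ is globally attracted to $e_2$, yielding a heteroclinic orbit from $e_1$ to $e_2$. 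The analogous reading of \eqref{hc} on $\{u_1 = 0\}$ and $\{u_2 = 0\}$ gives orbits $e_2 \to e_3$ and $e_3 \to e_1$, and the three orbits together with the three vertices form the heteroclinic cycle.

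Next, I would analyze its transverse stability via a Poincar\'e first-return map. From Theorem \ref{th:local-trivial}, under \eqref{hc} each $e_i$ is a saddle in $S_3$ with exactly one positive eigenvalue $\mu_i$ (outgoing along the cycle) and one negative eigenvalue $-\nu_i$ (incoming); for instance $\mu_1 = (a_{2,1} - a_{1,1})/a_{1,1}$ and $\nu_1 = (a_{1,1} - a_{3,1})/a_{1,1}$, with cyclic analogs at $e_2$ and $e_3$. The standard argument for planar heteroclinic cycles shows that composing the local linearizations at the three saddles with the global transition maps along the connecting orbits yields a first-return map of the form $x \mapsto C \,x^{\rho}$, where
\[
\rho \ = \ \frac{\nu_1 \nu_2 \nu_3}{\mu_1 \mu_2 \mu_3};
\]
the cycle is asymptotically stable when $\rho > 1$ and repelling when $\rho < 1$. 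A direct rearrangement shows that \eqref{hc_s} is equivalent to the coordinatewise inequality $\nu_i > \mu_i$, whence $\rho > 1$, while \eqref{hc_u} gives $\nu_i < \mu_i$ and $\rho < 1$.

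To conclude permanence under \eqref{hc_u}, I would invoke Hofbauer's average Lyapunov function method with $P(u) = u_1^{c_1} u_2^{c_2} u_3^{c_3}$, so that $\dot P / P = \sum_{i=1}^3 c_i \,G_i (u)$ with $G_i$ as in \eqref{eq:mean-field-3S}. A direct computation at $e_1$, using $G_i(e_1) = a_{i,1}/a_{1,1} - 1$, gives $\sum_i c_i G_i(e_1) = c_2 \mu_1 - c_3 \nu_1$, with cyclic analogs at $e_2$ and $e_3$. Positive weights $c_i$ making all three expressions simultaneously positive exist if and only if $\mu_1 \mu_2 \mu_3 > \nu_1 \nu_2 \nu_3$, which is exactly what \eqref{hc_u} guarantees. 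Since the boundary invariant set consists only of the three vertices and the three heteroclinic orbits, and $\dot P / P > 0$ at each vertex (hence in a neighborhood of each), Hofbauer's theorem produces a compact interior attractor, that is, permanence.

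The main obstacle is the second step: rigorously producing the return map in the form $x \mapsto C\,x^{\rho}$ requires a careful choice of transverse cross-sections at each saddle, control of both the linearization and the nonlinear remainder on the local charts, and estimates of the global transit maps along the three boundary heteroclinic orbits. Once the exponent $\rho = \prod_i \nu_i / \prod_i \mu_i$ is in hand, turning \eqref{hc_s} and \eqref{hc_u} into the coordinatewise inequalities on $\mu_i$ and $\nu_i$ is elementary algebra, and the permanence step reduces to a standard verification of Hofbauer's criterion.
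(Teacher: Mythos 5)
Your argument is correct, but it takes a genuinely different route from the paper's. The paper handles both the stability dichotomy and permanence in one stroke by forming the characteristic matrix $C$ of external eigenvalues $G_j(e_i)=(a_{j,i}-a_{i,i})/a_{i,i}$ and invoking Theorem 17.5.1 of Hofbauer and Sigmund with the single test vector $v=(1,1,1)$: the row sums of $C$ are exactly $(a_{j,i}+a_{k,i}-2a_{i,i})/a_{i,i}$, so all-negative corresponds to \eqref{hc_s} (attracting cycle) and all-positive to \eqref{hc_u} (repelling cycle and permanence). You instead re-derive the stability criterion from first principles via the classical planar return map $x\mapsto Cx^{\rho}$ with $\rho=\nu_1\nu_2\nu_3/(\mu_1\mu_2\mu_3)$, and then prove permanence separately with a weighted average Lyapunov function $P=u_1^{c_1}u_2^{c_2}u_3^{c_3}$; your identification of $\mu_i,\nu_i$ with the transversal eigenvalues of Theorem \ref{th:local-trivial} and your observation that \eqref{hc_s} and \eqref{hc_u} are precisely the coordinatewise inequalities $\nu_i>\mu_i$ and $\nu_i<\mu_i$ are both correct, as is the linear-programming step showing that positive weights $c_i$ exist if and only if $\prod_i\mu_i>\prod_i\nu_i$. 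What your route buys is a sharper statement (stability/permanence under the product conditions $\prod\nu_i\gtrless\prod\mu_i$, strictly weaker than the coordinatewise hypotheses of the theorem) and self-containedness in the permanence step; what it costs is the technical construction of the return map, which you flag as the main obstacle and leave unexecuted --- this is exactly the content that the paper outsources to the cited Hofbauer--Sigmund theorem, so in a final write-up you should either carry out the cross-section estimates or cite that result, at which point the two proofs essentially coincide in substance.
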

\begin{proof}
 By Theorem \ref{th:global-2}, the inequalities in \eqref{hc} indicate that the system \eqref{eq:mean-field-3S} only has the three trivial boundary
 equilibria as its boundary equilibria.
 Moreover, from the proof of Theorem \ref{th:local-trivial}, we know that the transversal eigenvalue at equilibrium $e_i$ in direction $u_j$ is given by 
 $$ \bigg(\frac{1}{u_j} \ \frac{du_j}{dt} \bigg) (e_i) \ = \ G_j (e_i) \ = \ \frac{a_{j,i} - a_{i,i}}{a_{i,i}}. $$
 In order to obtain a heteroclinic cycle $e_1 \to e_2 \to e_3 \to e_1$, each trivial equilibrium $e_i$ must have one positive and one negative transversal
 eigenvalue, and they must be arranged in a cyclic pattern.
 In particular, \eqref{hc} is a sufficient condition for the existence of a simple heteroclinic cycle.
 To prove the second part of the theorem, we let $\Gamma$ be a heteroclinic cycle which consists of the three trivial boundary equilibria $e_1, e_2, e_3$
 connected by heteroclinic orbits.
 Following the approach of Hofbauer and Sigmund \cite{hofbauer_sigmund_1998}, we define the \emph{characteristic matrix} of the heteroclinic cycle
 $\Gamma$ as the matrix whose entry in row $i$ and column $j$ is the external eigenvalue $G_j (e_i)$.
 This matrix is given by
 $$ C \ = \ \left(\begin{array}{ccc}
            \displaystyle 0 & \displaystyle \frac{a_{2,1} - a_{1,1}}{a_{1,1}} & \displaystyle \frac{a_{3,1} - a_{1,1}}{a_{1,1}} \vspace{6pt} \\
            \displaystyle \frac{a_{1,2} - a_{2,2}}{a_{2,2}} & \displaystyle 0 & \displaystyle \frac{a_{3,2} - a_{2,2}}{a_{2,2}} \vspace{6pt} \\
            \displaystyle \frac{a_{1,3} - a_{3,3}}{a_{3,3}} & \displaystyle \frac{a_{2,3} - a_{3,3}}{a_{3,3}} & \displaystyle 0  \end{array} \right). $$
 Then, defining $v = (1, 1, 1)$, we have
\begin{equation}
\label{hc_sign}
  C \cdot v^T \ = \
    \left(\begin{array}{c}
     G_1 (e_1) + G_2 (e_1) + G_3 (e_1) \vspace{4pt} \\
     G_1 (e_2) + G_2 (e_2) + G_3 (e_2) \vspace{4pt} \\
     G_1 (e_3) + G_2 (e_3) + G_3 (e_3) \end{array} \right) \ = \
    \left(\begin{array}{c}
    \displaystyle \frac{a_{2,1} + a_{3,1} - 2 a_{1,1}}{a_{1,1}} \vspace{6pt} \\
    \displaystyle \frac{a_{1,2} + a_{3,2} - 2 a_{2,2}}{a_{2,2}} \vspace{6pt} \\
    \displaystyle \frac{a_{2,3} + a_{1,3} - 2 a_{3,3}}{a_{3,3}} \end{array} \right)
\end{equation}
 By Theorem 17.5.1 in Hofbauer and Sigmund \cite{hofbauer_sigmund_1998}, the stability of the heteroclinic cycle can be determined based on the sign of the
 coordinates of the vector in \eqref{hc_sign}.
 More precisely,
\begin{enumerate}
 \item If all the coordinates of the vector \eqref{hc_sign} are positive, then the heteroclinic cycle $\Gamma$ is repelling.
  In this case, the system \eqref{eq:mean-field-3S} is permanent. \vspace{4pt}
 \item If all the coordinates of the vector \eqref{hc_sign} are negative, then the heteroclinic cycle $\Gamma$ is asymptotically stable in $S_3$.
  In this case, $\Gamma$ is an attractor of the system \eqref{eq:mean-field-3S}.
\end{enumerate}
 This completes the proof.
\end{proof} \\ \\
 Under the conditions in \eqref{hc}, there is a weak form of coexistence of the three species in the sense that none of the densities converges
 to zero.
 However, the trajectories may get closer and closer to the boundaries.
 Under the additional conditions in \eqref{hc_u} there is a strong form of coexistence in the sense that the theorem guarantees that the trajectories
 are eventually bounded away from the boundaries: the limit inferior of each of the three densities is larger than a positive constant.
 To find general conditions under which species coexist as in the two-type model due to a cooperative behavior, we now look at the stability of the
 nontrivial boundary equilibria.
\begin{theor}[Local stability of $e_{1,2}$]
\label{th:local-nontrivial}
 Assume that
\begin{equation}
\label{eq:conditions-bis}
 (a_{1,1} < a_{2,1} \ \hbox{and} \ a_{2,2} < a_{1,2}) \quad \hbox{or} \quad (a_{1,1} > a_{2,1} \ \hbox{and} \ a_{2,2} > a_{1,2})
\end{equation}
 so that $\bar u_{1,2} \in (0, 1)$ and $\bar u_{2,1} \in (0, 1)$ according to Theorem \ref{th:global-2}. Fix
 $$ \Delta_{1,2} \ = \ (2 a_{3,1} - a_{1,1} - a_{2,1}) (a_{1,2} - a_{2,2}) + (2 a_{3,2} - a_{1,2} - a_{2,2}) (a_{2,1} - a_{1,1}) $$
 a quantity that we call the invadibility of species 3.
 Then, we have the following.
\begin{enumerate}
 \item If $a_{1,1} < a_{2,1}$ and $a_{22} < a_{1,2}$ and $\Delta_{1,2} < 0$, then $e_{1,2}$ is locally asymptotically stable.
  In particular, species 3 cannot invade species 1 and 2 in their equilibrium. \vspace{4pt}
 \item If $a_{1,1} < a_{2,1}$ and $a_{22} < a_{1,2}$ and $\Delta_{1,2} > 0$, then $e_{1,2}$ is repelling.
  In particular, species 3 can invade species 1 and 2 in their equilibrium. \vspace{4pt}
 \item If $a_{1,1} > a_{2,1}$ and $a_{22} > a_{1,2}$ and $\Delta_{1,2} < 0$, then $e_{1,2}$ is a source.
  In particular, species 3 can invade species 1 and 2 in their equilibrium. \vspace{4pt}
 \item If $a_{1,1} > a_{2,1}$ and $a_{22} > a_{1,2}$ and $\Delta_{1,2} > 0$, then $e_{1,2}$ is a saddle node.
  In particular, species 3 cannot invade species 1 and 2 in their equilibrium.
\end{enumerate}
\end{theor}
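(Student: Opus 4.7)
The plan is to compute the Jacobian of \eqref{eq:mean-field-3S} at the boundary equilibrium $e_{1,2}$ and exploit the product structure $F_i = G_i u_i$ to extract the two eigenvalues that govern stability on $S_3$. Since $u_3 = 0$ at $e_{1,2}$ while $\bar u_{1,2}, \bar u_{2,1} \in (0,1)$ under \eqref{eq:conditions-bis}, the identities $F_i(e_{1,2}) = 0$ for $i = 1, 2$ force $G_1(e_{1,2}) = G_2(e_{1,2}) = 0$, and differentiating $F_3 = G_3 u_3$ gives $\partial F_3/\partial u_j = 0$ at $e_{1,2}$ for $j = 1, 2$ while $\partial F_3/\partial u_3 = G_3(e_{1,2})$. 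The full Jacobian is therefore block-triangular, and its eigenvalues in the tangent space of $S_3$ split cleanly into a \emph{tangential} eigenvalue governing motion along the invariant edge $\{u_3 = 0\}$ and a \emph{transverse} eigenvalue equal to $G_3(e_{1,2})$ that measures the invasion rate of species 3 when rare.

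The tangential eigenvalue comes for free from Theorem \ref{th:global-2}: restricted to $\{u_3 = 0\}$ the dynamics reduce to the two-type model \eqref{eq:mean-field-1D}, whose linearization at $\bar u_{1,2}$ has already been computed and shown to be strictly negative under cooperation and strictly positive under competition. Thus the only substantive calculation that remains is to put $G_3(e_{1,2})$ into closed form and to tie it to the quantity $\Delta_{1,2}$ introduced in the statement.

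For this I would introduce $D = a_{1,2}(a_{2,1} - a_{1,1}) + a_{2,1}(a_{1,2} - a_{2,2})$ and $\Delta = a_{1,2} a_{2,1} - a_{1,1} a_{2,2}$, and substitute the closed forms of $\bar u_{1,2}$ and $\bar u_{2,1}$ into the denominators appearing in $G_3$. A short algebraic reduction yields
$$ a_{1,1} \bar u_{1,2} + a_{2,1} \bar u_{2,1} \ = \ \frac{a_{2,1} \,\Delta}{D} \qquad \hbox{and} \qquad a_{1,2} \bar u_{1,2} + a_{2,2} \bar u_{2,1} \ = \ \frac{a_{1,2} \,\Delta}{D}, $$
after which the two fractions in $G_3(e_{1,2})$ collapse to $a_{3,1}(a_{1,2} - a_{2,2})/\Delta$ and $a_{3,2}(a_{2,1} - a_{1,1})/\Delta$. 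Expanding $\Delta_{1,2}$ and using $\tfrac{1}{2}[(a_{1,1}+a_{2,1})(a_{1,2}-a_{2,2}) + (a_{1,2}+a_{2,2})(a_{2,1}-a_{1,1})] = \Delta$, one recognizes the resulting numerator as precisely $\Delta_{1,2}/2$, giving the key identity $G_3(e_{1,2}) = \Delta_{1,2}/(2\Delta)$. Inspection of \eqref{eq:conditions-bis} shows that $\Delta > 0$ in the cooperation regime and $\Delta < 0$ in the competition regime, so the sign of $G_3(e_{1,2})$ tracks the sign of $\Delta_{1,2}$ in the former and its opposite in the latter. Combining this with the sign of the tangential eigenvalue immediately produces the four cases: sink in case 1, transversally repelling saddle in case 2, source in case 3, and saddle in case 4.

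The main obstacle is the algebraic collapse of $G_3(e_{1,2})$ to $\Delta_{1,2}/(2\Delta)$; once this identity is in hand, every other step is either a linearization already carried out in Theorem \ref{th:global-2} or a straightforward sign count. The invasion interpretation of $\Delta_{1,2}$ follows at no extra cost from the fact that $G_3(e_{1,2})$ is by construction the per capita growth rate of species 3 when it is perturbed away from zero at the two-species equilibrium.
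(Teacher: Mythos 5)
Your proposal is correct and follows essentially the same route as the paper: linearize \eqref{eq:mean-field-3S} at $e_{1,2}$, observe that the Jacobian is block-triangular so the stability on $S_3$ is governed by the tangential eigenvalue (already determined in Theorem \ref{th:global-2}) and the transverse eigenvalue $G_3(e_{1,2})$, and then tie the sign of the latter to $\Delta_{1,2}$. Your closed form $G_3(e_{1,2}) = \Delta_{1,2}/(2\Delta)$ with $\Delta = a_{1,2}a_{2,1}-a_{1,1}a_{2,2}$ checks out and is a slightly cleaner packaging of the paper's sign computation $\sign(\lambda_3) = \sign\big[\Delta_{1,2}\,[a_{1,2}(a_{2,1}-a_{1,1})+a_{2,1}(a_{1,2}-a_{2,2})]\big]$, since that bracket has the same sign as $\Delta$ under \eqref{eq:conditions-bis}.
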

\begin{proof}
 Recall that under the first condition in \eqref{eq:conditions-bis}, the nontrivial boundary equilibrium $e_{1,2}$ is globally stable (cooperation) in
 the two-dimensional manifold
 $$ \{(u_1, u_2, u_3) \in \R_+ : u_1, u_2 \in (0, 1) \ \hbox{and} \ u_1 + u_2 = 1 \} $$
 whereas under the second condition in \eqref{eq:conditions-bis} it is unstable (competition) in this manifold.
 To have a complete picture of the local stability of $e_{1,2}$ and deduce the invadability of species 3, we look at the Jacobian matrix associated
 with the system \eqref{eq:mean-field-3S} at point $e_{1,2}$
 $$ J_{e_{1,2}} \ = \ \left(\begin{array}{ccc}
                \displaystyle \frac{\partial F_1}{\partial u_1} \,(e_{1,2}) &
                \displaystyle \frac{\partial F_1}{\partial u_2} \,(e_{1,2}) &
                \displaystyle \frac{\partial F_1}{\partial u_3} \,(e_{1,2}) \vspace{8pt} \\
                \displaystyle \frac{\partial F_2}{\partial u_1} \,(e_{1,2}) &
                \displaystyle \frac{\partial F_2}{\partial u_2} \,(e_{1,2}) &
                \displaystyle \frac{\partial F_2}{\partial u_3} \,(e_{1,2}) \vspace{8pt} \\
                \displaystyle \frac{\partial F_3}{\partial u_1} \,(e_{1,2}) &
                \displaystyle \frac{\partial F_3}{\partial u_2} \,(e_{1,2}) &
                \displaystyle \frac{\partial F_3}{\partial u_3} \,(e_{1,2}) \end{array}\right). $$
 To express the coefficients of the matrix, we introduce
 $$ D_j \ = \ (a_{1,j} \,u_1 + a_{2,j} \,u_2 + a_{3,j} \,u_3) (e_{1,2}) \ = \ a_{1,j} \,\bar u_{1,2} + a_{2,j} \,\bar u_{2,1} \quad \hbox{for} \ j = 1, 2, 3. $$
 Since $e_{1,2}$ is a boundary equilibrium, $G_1 (e_{1,2}) = G_2 (e_{1,2}) = 0$. Therefore
 $$ \begin{array}{rclcrcl}
    \displaystyle \frac{\partial F_1}{\partial u_1} \,(e_{1,2}) & = &
    \displaystyle \Bigg[\frac{a_{1,1} \,a_{2,1}}{D_1^2} - \frac{a_{1,2}^2}{D_2^2} \Bigg] \ \bar u_{1,2} \,\bar u_{2,1} & \ \ &
    \displaystyle \frac{\partial F_1}{\partial u_2} \,(e_{1,2}) & = &
    \displaystyle - \ \Bigg[\frac{a_{1,1} \,a_{2,1}}{D_1^2} - \frac{a_{1,2}^2}{D_2^2} \Bigg] \ \bar u_{1,2}^2 \vspace{6pt} \\
    \displaystyle \frac{\partial F_2}{\partial u_1} \,(e_{1,2}) & = &
    \displaystyle \Bigg[\frac{a_{2,1}^2}{D_1^2} - \frac{a_{1,2} \,a_{2,2}}{D_2^2} \Bigg] \ \bar u_{2,1}^2 & \ \ &
    \displaystyle \frac{\partial F_2}{\partial u_2} \,(e_{1,2}) & = &
    \displaystyle  - \ \Bigg[\frac{a_{2,1}^2}{D_1^2} - \frac{a_{1,2} \,a_{2,2}}{D_2^2} \Bigg] \ \bar u_{1,2} \,\bar u_{2,1} \end{array} $$
 Using that the third coordinate of $e_{1,2}$ is equal to zero, we also have
 $$ \begin{array}{l}
    \displaystyle \frac{\partial F_3}{\partial u_1} \,(e_{1,2}) \ = \ \frac{\partial F_3}{\partial u_2} \,(e_{1,2}) \ = \ 0 \vspace{8pt} \\
    \displaystyle \frac{\partial F_3}{\partial u_3} \,(e_{1,2}) \ = \ G_3 (e_{1,2}) \ = \ \frac{a_{3,1} \,\bar u_{1,2}}{D_1} \ + \ \frac{a_{3,2} \,\bar u_{2,1}}{D_2} \ - \ 1 \end{array} $$
 Note that the determinant of the first $2 \times 2$ block of the Jacobian matrix $J_{e_{1,2}}$ is given by
 $$ \frac{\partial F_1}{\partial u_1} \,(e_{1,2}) \times \frac{\partial F_2}{\partial u_2} \,(e_{1,2}) \ - \
    \frac{\partial F_1}{\partial u_2} \,(e_{1,2}) \times \frac{\partial F_2}{\partial u_1} \,(e_{1,2}) \ = \ 0 $$
 while the trace of the matrix is given by
 $$ \frac{\partial F_1}{\partial u_1} \,(e_{1,2}) \ + \ \frac{\partial F_2}{\partial u_2} \,(e_{1,2}) \ = \
    \Bigg[\frac{a_{2,1} \,(a_{1,1} - a_{2,1})}{D_1^2} + \frac{a_{1,2} \,(a_{2,2} - a_{1,2})}{D_2^2} \Bigg] \ \bar u_{1,2} \,\bar u_{2,1} $$
 from which we deduce that the eigenvalues are given by $\lambda_1 = 0$,
 $$ \lambda_2 \ = \ \bigg[\frac{a_{2,1} \,(a_{1,1} - a_{2,1})}{D_1^2} + \frac{a_{1,2} \,(a_{2,2} - a_{1,2})}{D_2^2} \bigg] \,\bar u_{1,2} \,\bar u_{2,1} \quad \hbox{and} \quad
    \lambda_3 \ = \ \frac{a_{3,1} \,\bar u_{1,2}}{D_1} + \frac{a_{3,2} \,\bar u_{2,1}}{D_2} \ - \ 1 $$
 The eigenspace associated with the eigenvalue 0 is given by $\Span (e_{1,2})$.
 Since the second eigenvalue only depends on the first $2 \times 2$ block of the matrix, we expect that the associated eigenspace is spanned by $(1, -1, 0)$.
 To prove this assertion with a minimum of calculation, we observe that
 $$ \frac{\partial F_1}{\partial u_1} \,(e_{1,2}) \ + \ \frac{\partial F_2}{\partial u_1} \,(e_{1,2}) \ + \ \frac{\partial F_3}{\partial u_1} \,(e_{1,2}) \ = \
    \frac{\partial F_1}{\partial u_1} \,(e_{1,2}) \ + \ \frac{\partial F_2}{\partial u_1} \,(e_{1,2}) \ \equiv \ 0 $$
 since $F_1 + F_2 + F_3 \equiv 0$.
 It follows that
 $$ \frac{1}{\bar u_{2,1}} \ \frac{\partial F_1}{\partial u_1} \,(e_{1,2}) \ + \ \frac{1}{\bar u_{2,1}} \ \frac{\partial F_2}{\partial u_1} \,(e_{1,2}) \ = \
    \Bigg[\frac{a_{1,1} \,a_{2,1}}{D_1^2} - \frac{a_{1,2}^2}{D_2^2} \Bigg] \ \bar u_{1,2} \ + \
    \Bigg[\frac{a_{2,1}^2}{D_1^2} - \frac{a_{1,2} \,a_{2,2}}{D_2^2} \Bigg] \ \bar u_{2,1} \ = \ 0 $$
 In particular, recalling that $\lambda_2$ is the trace of the first $2 \times 2$ block,
 $$ \begin{array}{l}
    \displaystyle \frac{\partial F_1}{\partial u_1} \,(e_{1,2}) \ - \ \frac{\partial F_1}{\partial u_2} \,(e_{1,2}) \ = \
    \displaystyle \Bigg[\frac{a_{1,1} \,a_{2,1}}{D_1^2} - \frac{a_{1,2}^2}{D_2^2} \Bigg] \ \bar u_{1,2} \vspace{6pt} \\ \hspace{20pt} = \
    \displaystyle \Bigg[\frac{a_{1,1} \,a_{2,1}}{D_1^2} - \frac{a_{1,2}^2}{D_2^2} \Bigg] \ \bar u_{1,2} \,\bar u_{2,1} \ - \
    \displaystyle \Bigg[\frac{a_{2,1}^2}{D_1^2} - \frac{a_{1,2} \,a_{2,2}}{D_2^2} \Bigg] \ \bar u_{2,1} \,\bar u_{1,2} \ = \ \lambda_2 \end{array} $$
 Similarly, we prove that
 $$ \frac{\partial F_2}{\partial u_1} \,(e_{1,2}) \ - \ \frac{\partial F_2}{\partial u_2} \,(e_{1,2}) \ = \
     - \ \frac{\partial F_1}{\partial u_1} \,(e_{1,2}) \ - \ \frac{\partial F_2}{\partial u_2} \,(e_{1,2}) \ = \ - \lambda_2 $$
 therefore, the eigenspace associated with the eigenvalue $\lambda_2$ is spanned by $(1, -1, 0)$.
 The sign of this eigenvalue thus indicates the stability of $e_{1,2}$ in the invariant manifold $u_1 + u_2 = 1$ and as expected we
 find the same conditions as in the case $n = 2$ given in \eqref{eq:conditions-bis}.
 Rather than computing explicitly the third eigenspace, we notice that all three eigenvectors form a basis of the three dimensional Euclidean space.
 This together with the expression of the first two eigenvectors implies that the third coordinate of the eigenvector associated with the eigenvalue
 $\lambda_3$ is different from zero. In particular, the sign of $\lambda_3$ indicates whether $e_{1,2}$ is repelling or not in the direction of $e_3$.
 To study the sign of this eigenvalue, we first observe that, by definition of $e_{1,2}$,
 $$ a_{2,1} \,(a_{1,2} \,\bar u_{1,2} + a_{2,2} \,\bar u_{2,1}) \ = \ a_{1,2} \,(a_{1,1} \,\bar u_{1,2} + a_{2,1} \,\bar u_{2,1}) $$
 which allows to simplify the expression of $\lambda_3$ and obtain
 $$ \begin{array}{rcl}
    \sign (\lambda_3) & = &
    \sign \big[\,2 a_{1,2} \,a_{3,1} \,\bar u_{1,2} + 2 a_{2,1} \,a_{3,2} \,\bar u_{2,1} - a_{1,2} \,(a_{1,1} + a_{2,1}) \,\bar u_{1,2} - a_{2,1} \,(a_{1,2} + a_{2,2}) \,\bar u_{2,1} \,\big] \vspace{6pt} \\ & = &
    \sign \big[\,a_{1,2} \,(2 a_{3,1} - a_{1,1} - a_{2,1}) \,\bar u_{1,2} + a_{2,1} \,(2 a_{3,2} - a_{1,2} - a_{2,2}) \,\bar u_{2,1} \,\big] \vspace{6pt} \\ & = &
    \sign \big[\Delta_{1,2} \,[a_{1,2} \,(a_{2,1} - a_{1,1}) + a_{2,1} \,(a_{1,2} - a_{2,2})] \big]. \end{array} $$
 The result then follows directly from the previous equation and Theorem \ref{th:global-2}.
\end{proof} \\ \\
 Even though the four conditions in Theorem \ref{th:local-nontrivial} are somewhat complicated to understand due to the expression
 of $\Delta_{1,2}$, they can be used to deduce sufficient conditions that are more meaningful from an ecological point of view.
 Since species 3 can invade species 1 and 2 in their equilibrium when species 1 and 2 cooperate and $\Delta_{1,2} > 0$ or when
 species 1 and 2 compete and $\Delta_{1,2} < 0$, we obtain that a sufficient condition for invadability is given by
 $$ a_{3,1} \ > \ \frac{a_{1,1} + a_{2,1}}{2} \qquad \hbox{and} \qquad a_{3,2} \ > \ \frac{a_{1,2} + a_{2,2}}{2} $$
 In other words, species 3 can invade species 1 and 2 in their equilibrium whenever its ability to exploit resource of type 1, respectively 2,
 is larger than the average ability of the two other species to exploit this resource.
 The last theorem, which is partly based on Theorems \ref{th:heteroclinic}-\ref{th:local-nontrivial}, gives general sufficient
 conditions under which strong coexistence of all three species is possible in the sense that the three-type system is permanent.
 For more convenience, we distinguish four cases depending on the number of nontrivial boundary equilibria.
 We only give particular conditions but, as point out above, additional conditions can be deduced by replacing $(1, 2, 3)$ with $(\sigma (1), \sigma (2), \sigma (3))$ where $\sigma$ is any of the
 six permutations of the group $\mathfrak S_3$.

\begin{theor}[Coexistence]
\label{th:permanence}
 Let $\Delta$ denote the set of nontrivial boundary equilibria.
 If conditions in one of the following four cases hold, then the system is permanent.
\begin{enumerate}
 \item[0.] With $\Delta = \varnothing$ it suffices that conditions \eqref{hc} and \eqref{hc_u} hold. \vspace{4pt}
 \item[1.] With $\Delta = \{e_{1,2} \}$ it suffices that $\Delta_{1,2} > 0$ and
\begin{equation}
\label{1s}
 a_{3,1} < a_{1,1} < a_{2,1} \qquad a_{2,2} < \min (a_{3,2}, a_{1,2}) \qquad a_{2,3} < a_{3,3} < a_{1,3}
\end{equation}
 \item[2.] With $\Delta = \{e_{1,2}, e_{2,3} \}$ it suffices that $\min (\Delta_{1,2}, \Delta_{2,3}) > 0$ and
\begin{equation}
\label{2s}
 a_{3,1} < a_{1,1} < a_{2,1} \qquad a_{2,2} < \min (a_{3,2}, a_{1,2}) \qquad a_{3,3} < \min (a_{1,3}, a_{2,3})
\end{equation}
 \item[3.] With $\Delta = \{e_{1,2}, e_{2,3}, e_{3,1} \}$ it suffices that $\min (\Delta_{1,2}, \Delta_{2,3}, \Delta_{3,1}) > 0$ and
\begin{equation}
\label{3s}
 a_{1,1} < \min (a_{2,1}, a_{3,1}) \qquad a_{2,2} < \min (a_{3,2}, a_{1,2}) \qquad a_{3,3} < \min (a_{1,3}, a_{2,3})
\end{equation}
\end{enumerate}
\end{theor}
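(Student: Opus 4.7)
The plan is to apply the average Lyapunov function criterion of Hofbauer and Sigmund \cite{hofbauer_sigmund_1998} with
\[
P(u) \ = \ u_1^{p_1} \,u_2^{p_2} \,u_3^{p_3}
\]
for positive weights $p_1, p_2, p_3$ to be chosen. Since $\dot P / P = \Psi(u) := p_1 G_1(u) + p_2 G_2(u) + p_3 G_3(u)$, permanence follows as soon as the weights can be chosen so that $\Psi$ is positive on every compact invariant set contained in the boundary $\partial S_3$. Case~0 is immediate: under \eqref{hc} and \eqref{hc_u} the only boundary invariant set beyond the three trivial equilibria is the heteroclinic cycle already analyzed in Theorem~\ref{th:heteroclinic}, and that theorem delivers permanence directly.

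For Cases~1--3 the first step is to reduce the criterion to a verification at the boundary equilibria. Each face $\{u_k = 0\}$ of $S_3$ is invariant and, on it, the flow coincides with the two-type mean-field system \eqref{eq:mean-field-2S} for species $i$ and $j$. Theorem~\ref{th:global-2} then classifies the asymptotic behavior on the face as either cooperation (interior trajectories tend to $e_{i,j}$) or cheating (interior trajectories tend to a trivial equilibrium). Inspecting the inequalities in \eqref{1s}, \eqref{2s}, \eqref{3s} face by face shows that the boundary equilibria are exactly the announced $\{e_1, e_2, e_3\} \cup \Delta$ and that no face supports a heteroclinic cycle; hence the chain-recurrent set on $\partial S_3$ reduces to this finite list of equilibria.

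The second step is to translate the positivity of $\Psi$ at each of these equilibria into linear conditions on the weights. At $e_i$, the identity $G_i(e_i) = 0$ gives $\Psi(e_i) = \sum_{j \neq i} p_j (a_{j,i} - a_{i,i}) / a_{i,i}$. At $e_{i,j}$, the computation in the proof of Theorem~\ref{th:local-nontrivial} gives $G_i(e_{i,j}) = G_j(e_{i,j}) = 0$ together with $\sign G_k(e_{i,j}) = \sign \Delta_{i,j}$ under the cooperation conditions $a_{i,i} < a_{j,i}$, $a_{j,j} < a_{i,j}$, which hold on every face carrying a member of $\Delta$. The hypothesis $\Delta_{i,j} > 0$ therefore makes $\Psi(e_{i,j}) = p_k G_k(e_{i,j}) > 0$ automatically for any positive choice of weights. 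What remains is to guarantee $\Psi(e_i) > 0$ at the three trivial equilibria. In Case~3, all $G_j(e_i)$ with $j \neq i$ are positive by \eqref{3s} and the choice $p_1 = p_2 = p_3 = 1$ already works. In Case~2, only $e_1$ yields a mixed-sign contribution, giving the single constraint $p_2(a_{2,1} - a_{1,1}) > p_3(a_{1,1} - a_{3,1})$. In Case~1, both $e_1$ and $e_3$ are mixed-sign, giving two independent constraints involving disjoint pairs of weights. Both systems are resolvable by taking the weights sufficiently large in turn.

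The main technical obstacle is the reduction from arbitrary compact boundary invariant sets to the finite list of boundary equilibria, because an overlooked periodic or heteroclinic attractor on the boundary would invalidate the Lyapunov argument. This reduction rests entirely on the gradient-like two-type dynamics certified by Theorem~\ref{th:global-2}: every face flows into its sink without recurrence. Once this structural fact is in place, the rest is a short arithmetic exercise, amounting to satisfying at most two linear inequalities in three positive variables per case.
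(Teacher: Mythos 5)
Your proposal is correct and follows the same strategy as the paper: reduce the boundary flow to a finite set of equilibria via Theorem \ref{th:global-2}, then verify positivity of an average Lyapunov function at each of them (the paper cites Hutson \cite{hutson_1984}; you cite Hofbauer--Sigmund, but the criterion is the same). The one genuine difference is that you work with the weighted function $u_1^{p_1}u_2^{p_2}u_3^{p_3}$, whereas the paper uses $P = u_1 u_2 u_3$, proves only Case~3 explicitly, and asserts the other cases are ``similar.'' Your refinement is not merely cosmetic: under \eqref{1s} or \eqref{2s} the trivial equilibrium $e_1$ (and $e_3$ in Case~1) is a saddle, and nothing forces $a_{2,1}+a_{3,1} > 2a_{1,1}$, so the unweighted sum $G_1(e_1)+G_2(e_1)+G_3(e_1)$ can be nonpositive --- indeed for the paper's own illustrative matrices $M_1$ and $M_2$ it equals zero, so the unweighted function fails the strict positivity test and the weights are genuinely needed to complete Cases~1 and~2. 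Your resolution of the resulting linear inequalities is sound, though in Case~1 the two constraints involve the pairs $(p_2,p_3)$ and $(p_1,p_2)$, which are not disjoint as you state; the triangular structure still lets you fix $p_3$, then enlarge $p_2$, then $p_1$, exactly as you propose. The only other point worth making explicit is the one you already flag: the face-by-face check that \eqref{1s}--\eqref{3s} produce exactly the announced set $\Delta$ and leave no recurrent boundary dynamics beyond the equilibria, which follows from the cheater/cooperation dichotomy of Theorem \ref{th:global-2} on each invariant face.
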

\begin{proof}
 Permanence of the system under conditions 0 is simply Theorem \ref{th:heteroclinic} that we repeat here to have a summary of
 all the coexistence results.
 Since permanence can be proved similarly under the other three conditions, we only focus on the last case when there are three nontrivial
 boundary equilibria.
 The proof is based on an application of Theorem 2.5 in Hutson \cite{hutson_1984}.
 First, we introduce the function $P (u_1, u_2, u_3) = u_1 \,u_2 \,u_3$ and observe that this function is equal to zero on the boundary
 of $S_3$ and strictly positive in the interior of $S_3$, which makes $P$ an average Lyapunov function of the system \eqref{eq:mean-field-3S}.
 From the expression of the derivatives in \eqref{eq:mean-field-3S}, we then obtain
 $$ \psi \ := \ \frac{1}{P} \ \frac{dP}{dt} \ = \
    \frac{1}{u_1 \,u_2 \,u_3} \ \bigg(\frac{du_1}{dt} \ u_2 \,u_3 + \frac{du_2}{dt} \ u_3 u_1 + \frac{du_3}{dt} \ u_1 u_2 \bigg) \ = \ G_1 + G_2 + G_3. $$
 By Theorem \ref{th:global-2}, the condition \eqref{3s} implies that the $\omega$-limit sets of the boundary of $S_3$ consists of the six
 trivial and nontrivial boundary equilibria.
 Therefore, in order to prove that the system is permanent, it suffices to prove according to Theorem 2.5 in Hutson \cite{hutson_1984} that the
 function $\psi$ evaluated at each of the six boundary equilibria is positive.
 We conclude by observing that
 $$ \begin{array}{rcl}
     (G_1 + G_2 + G_3) (e_i) & = & \displaystyle \frac{a_{j,i} + a_{k,i} - 2 a_{i,i}}{a_{i,i}} \ > \ 0 \vspace{8pt} \\
     (G_1 + G_2 + G_3) (e_{i,j}) & = & \displaystyle
    \frac{(2 a_{k,i} - a_{i,i} - a_{j,i}) (a_{i,j} - a_{j,j}) + (2 a_{k,j} - a_{i,j} - a_{j,j}) (a_{j,i} - a_{i,i})}{a_{i,j} \,a_{j,i} - a_{i,i} \,a_{j,j}} \vspace{4pt} \\ & = &
    \displaystyle \frac{\Delta_{i,j}}{a_{i,j} \,a_{j,i} - a_{i,i} \,a_{j,j}} \ > \ 0 \end{array} $$
 for all $i \neq j$ provided $\Delta_{i,j} > 0$ and the conditions in \eqref{3s} hold.
\end{proof} \\ \\
 Each of the four sets of conditions in Theorem \ref{th:permanence}, which describes the local dynamics at each boundary equilibrium,
 as well as all additional cases that can be deduced from a permutation of the three species imply that the system is permanent.
 However, the theorem does not specify whether there are indeed matrices that satisfy these conditions that might be conflicting.
 To have a complete picture of the coexistence region of the three-type nonspatial model, the last step is to identify four matrices that
 satisfy the conditions of Theorem \ref{th:permanence} and collect all the configurations of local dynamics at the boundary that might
 also produce permanence of the system.
 The four particular scenarios described in Theorem \ref{th:permanence} are illustrated in a schematic manner by the four pictures in the
 first row of Figure \ref{fig:coexistence}.
 The second row gives the solution curves of the system when the dynamics are described by each of the following four matrices, and it is
 straightforward to check that these matrices indeed satisfy each of the four sets of conditions of the theorem.
\begin{figure}[t!]
\centering
 \includegraphics[width = 440pt]{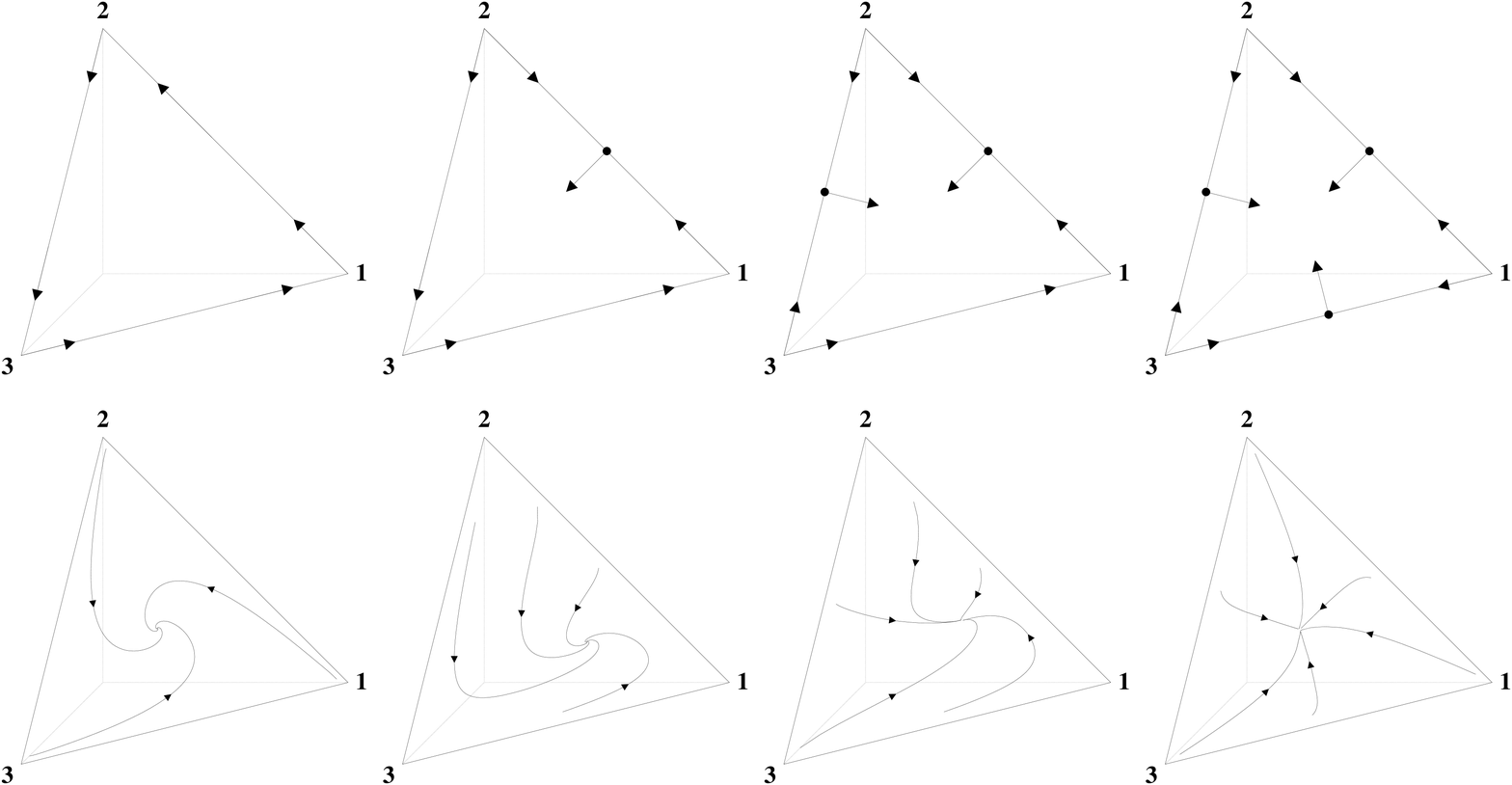}
\caption{\upshape The four possible scenarios.}
\label{fig:coexistence}
\end{figure}
 $$ M_0 = \left(\hspace{-3pt} \begin{array}{ccc} 1 & 0 & 4 \\ 4 & 1 & 0 \\ 0 & 4 & 1 \end{array} \hspace{-3pt} \right) \qquad
    M_1 = \left(\hspace{-3pt} \begin{array}{ccc} 1 & 1 & 2 \\ 2 & 0 & 0 \\ 0 & 8 & 1 \end{array} \hspace{-3pt} \right) \qquad
    M_2 = \left(\hspace{-3pt} \begin{array}{ccc} 1 & 1 & 2 \\ 2 & 0 & 1 \\ 0 & 4 & 0 \end{array} \hspace{-3pt} \right) \qquad
    M_3 = \left(\hspace{-3pt} \begin{array}{ccc} 1 & 2 & 2 \\ 2 & 1 & 2 \\ 2 & 2 & 1 \end{array} \hspace{-3pt} \right) $$
 To look for all the possible local stability at the boundary equilibria that might produce permanence of the system, we first observe that,
 whenever two species, say species 1 and 2, cooperate with each other but are defectors with respect to species 3, we have
 $$ a_{3,1} < a_{1,1} < a_{2,1} \qquad \hbox{and} \qquad a_{3,2} < a_{2,2} < a_{1,2} $$
 which directly implies that $\Delta_{1,2} < 0$.
 Therefore, species 3 cannot invade species 1 and 2 in their equilibrium.
 Similarly, when species 1 and 2 compete with each other but are cooperators with respect to species 3, we always have the condition
 $\Delta_{1,2} < 0$.
 This implies that the first two pictures of Figure \ref{fig:impossible} that describe the local stability of the two trivial boundary
 equilibria $e_1$ and $e_2$, and the nontrivial boundary equilibrium $e_{1,2}$ cannot occur.
 Using the two previous observations and the fact that it is necessary in order to have permanence of the system that none of the
 boundary equilibria is locally stable, simple graphical methods lead to only five possible scenarios, plus again the ones that can
 be deduced from a permutation of the three species.
 These five scenarios are the ones depicted in the first row of Figure \ref{fig:coexistence} and in the last picture
 of Figure \ref{fig:impossible} but we claim that the boundary dynamics shown in this last picture are conflicting.
 To establish this result, we observe that the local stability at the three trivial boundary equilibria leads to
\begin{equation}
\label{eq:trivial}
 a_{2,1} < a_{1,1} < a_{3,1} \qquad  a_{1,2} < a_{2,2} < a_{3,2} \qquad a_{3,3} < \min (a_{1,3}, a_{2,3}).
\end{equation}
 To check whether $e_{2,3}$ can be a saddle node, we compute
 $$ \begin{array}{rcl} \Delta_{2,3} & = &
    (2 a_{1,2} - a_{2,2} - a_{3,2}) (a_{2,3} - a_{3,3}) + (2 a_{1,3} - a_{2,3} - a_{3,3}) (a_{3,2} - a_{2,2}) \vspace{4pt} \\ & = &
    (2 a_{1,2} - a_{2,2} - a_{3,2}) (a_{2,3} - a_{3,3}) + (2 a_{1,3} - 2 a_{2,3} + a_{2,3} - a_{3,3}) (a_{3,2} - a_{2,2}) \vspace{4pt} \\ & = &
     2 \,(a_{2,3} - a_{3,3}) (a_{1,2} - a_{2,2}) + 2 \,(a_{3,2} - a_{2,2}) (a_{1,3} - a_{2,3}). \end{array} $$
 In particular, under the conditions \eqref{eq:trivial}, $e_{2,3}$ is a saddle node if and only if $\Delta_{2,3} > 0$ which,
 in view of the signs of the first three factors, requires that $a_{1,3} > a_{2,3}$.
 We prove similarly that, for the nontrivial boundary equilibrium $e_{3,1}$ to be a saddle node, it is necessary that
 $a_{1,3} < a_{2,3}$, therefore the two equilibria cannot both be saddle nodes.
 In conclusion, not only the four pictures in the first row of Figure \ref{fig:coexistence} are possible but also they are the
 only ones that lead to coexistence.
\begin{figure}[t!]
\centering
 \includegraphics[width = 440pt]{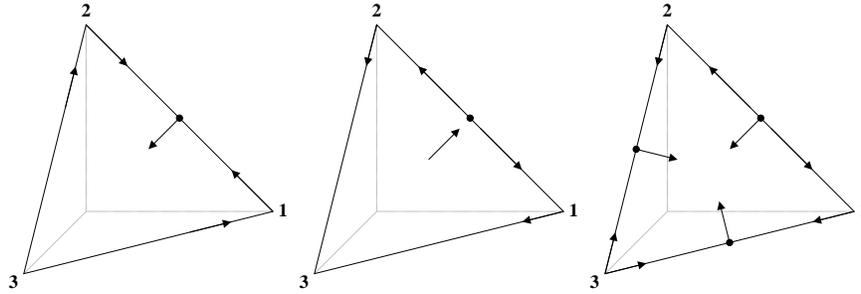}
\caption{\upshape Impossible pictures.}
\label{fig:impossible}
\end{figure}

% % % % % % % % % % % % % % % % % % % % % % % % % % % % % % % % % % % % % % % % % % % % % % % % % % % % % % % % % % % % % % % % % % % % % 

\subsection*{Summary of the results.}
 We conclude this section with a brief overview of the analytical results collected for the nonspatial two-type and three-type models.

\indent \emph{The two-type model.}
 In the presence of only two species, a defector always outcompetes a cooperator.
 More generally, regardless of the size of the community, a species that has a higher ability than all other species to exploit resources
 of either type outcompetes the other species, due to the fact that it always has the highest fitness \emph{regardless} of the global
 species distribution.
 When both species compete in a defector-defector relationship then the system is bistable indicating that coexistence is
 not possible and that the outcome of the competition strongly depends on the initial conditions.
 In contrast, two cooperators always coexist.
 Motivated by the properties of the two-type model, our analysis of the three-type model focuses on the two extreme cases that also
 appear later to be the most interesting ones: tristability and coexistence.

\indent \emph{Tristability.}
 In communities involving three species, each of the three trivial boundary equilibria is locally stable if and only if all three pairs
 of species are in defector-defector relationship.
 Numerical simulations of the nonspatial model suggest that, as in the case of bistability for the two-type model, for almost all
 initial conditions, one species outcompetes the other two ones.

\indent \emph{Coexistence and cooperation.}
 In contrast, if none of the three pairs of species is in a competition relationship, coexistence is possible.
 In particular, as in the two-type model, coexistence occurs in situations in which any two species cooperate.
 However, we point out that, whereas in the two-type model cooperation implies coexistence, global cooperation alone does not
 imply coexistence of three species since additional invadability conditions are required.
 Even though we omit the details of the proof here, it can be easily seen that if the cooperation between two species is significantly
 stronger than the cooperation between any of these two species and the third one, then one of the nontrivial boundary equilibrium
 becomes locally stable:
 indeed, it suffices to take the matrix with zeros on the diagonal, with $a_{1,2} = a_{2,1} > 2$, and with ones elsewhere.
 Numerical simulations further indicate that the two strong cooperators outcompete the third species.
 Hence, that any two species coexist in the absence of the third one does not imply coexistence of all three species.

\indent \emph{Coexistence of cheaters.}
 One of the most interesting aspects of the nonspatial model is that, whereas in the presence of two species cooperation is the only
 mechanism that promotes coexistence, in the presence of three species there are other such mechanisms.
 In particular, three cheaters can coexist.
 This happens when each species is a defector with respect to a second species but a cooperator with respect to the third species,
 a situation that we shall call \emph{rock-paper-scissors} type relationship to draw another analogy with game theory.
 When scissors are minoritary, paper beats rock and increases its density until the environment becomes suitable enough for scissors
 to expand again, and so on.
 In particular, that any two species cannot coexist does not imply that global coexistence is not possible.
 In rock-paper-scissors relationships, there is always a weak form of coexistence in the sense that none of the species is driven
 to extinction.
 However, the densities can be alternately arbitrarily low and, as in situations in which species cooperate, strong coexistence
 in the form of permanence of the system requires additional conditions.

\indent \emph{Coexistence and competition.}
 Finally, assuming that two species compete leads to the existence of at least one locally stable boundary equilibrium.
 In other words, whenever two species compete, all three species cannot coexist in the sense that the system is not permanent.
 There are only four boundary dynamics, along with the ones deduced from a permutation of the three species, that lead to
 strong coexistence: the four dynamics depicted in the first row of Figure \ref{fig:coexistence}.
 Therefore, coexistence can only occur when all pairs of species are either in a defector-cooperator relationship or
 cooperator-cooperator relationship, and the coexistence region covers all the cases where the number of cooperative pairs
 ranges from zero to three: regardless of the number of nontrivial boundary equilibria, permanence of the system is possible.
 In conclusion, coexistence occurs either in the presence of a global cooperation, or in the presence of a rock-paper-scissors
 type dynamics, or dynamics that consist of a mixture of these two extreme cases.

%%%%%%%%%%%%%%%%%%%%%%%%%%%%%%%%%%%%%%%%%%%%%%%%%%%%%%%%%%%%%%%%%%%%%%%%%%%%%%%%%%%%%%%%%%%%%%%%%%%%%%%%%%%%%%%%%%%%%%%%%%%%%%%%%%%%%%%%%%

\section{Analytical results for the spatial stochastic model}
\label{sec:stochastic}

\indent This section collects important analytical results for the two-type spatial model.
 We focus on the meaning of these results and only give an intuitive idea of the mathematical proofs.
 For a rigorous analysis of the two-type spatial model, we refer the reader to Lanchier \cite{lanchier_2010}.
 As previously, we explore the phase diagram in the $\theta_1$-$\theta_2$ plane where $\theta_i$ defined in \eqref{eq:theta}
 measures the relative ability of species $i$ to exploit the resource it produces, therefore species $i$ is a cooperator or
 a defector depending on whether the parameter $\theta_i$ is smaller or larger than one half.

% % % % % % % % % % % % % % % % % % % % % % % % % % % % % % % % % % % % % % % % % % % % % % % % % % % % % % % % % % % % % % % % % % % % % 

\subsection*{The one-dimensional two-type process.}
 To understand the one-dimensional model, we first initiate the process from the deterministic configuration in which
 all sites to the left of site 0, including site 0, are of type 1, and all the other sites are of type 2.
 The evolution rules imply that, at the times the rightmost site of type 1, respectively, leftmost site of type 2, is updated,
 the probability that the site to be updated switches to the other type is given by
 $$ \frac{a_{2,1}}{a_{1,1} + a_{2,1}} \ = \ 1 - \theta_1 \qquad \hbox{and} \qquad \frac{a_{1,2}}{a_{1,2} + a_{2,2}} \ = \ 1 - \theta_2 $$
 respectively.
 Since in addition all lattice sites are updated at the same rate, we deduce that the interface between both types drifts to the
 left or to the right depending on whether $\theta_1$ is smaller or larger than $\theta_2$, respectively.
 Therefore, when $\theta_1 < \theta_2$ species 2 wins, which is defined for the stochastic process as the fact that any lattice
 site is eventually of type 2, i.e.,
\begin{equation}
\label{eq:invasion}
 \lim_{t \to \infty} \ P \,(\eta_t (x) = 2) \ = \ 1 \quad \hbox{for all} \ x \in \Z.
\end{equation}
 Similarly, when $\theta_1 > \theta_2$ species 1 wins.
 Standard techniques further imply that the same conclusion holds when starting from a configuration with infinitely many
 individuals of both types, which includes all configurations in which the types at different sites are independent and identically
 distributed and in which both types occur with positive probability.

\indent To understand the neutral case $\theta_1 = \theta_2$, we initiate the stochastic process from the configuration in which sites
 are independently of type 1 and of type 2 with the same probability.
 We note that, excluding the case $\theta_1 = \theta_2 = 1$ in which the process is static, the set of interfaces, defined as the random
 set of points in the dual lattice $\Z + \frac{1}{2}$ that lie between two different types, evolves almost like a system of independent
 annihilating symmetric random walks.
 More precisely, each interface jumps one unit to the left or one unit to the right with equal probability except when two interfaces
 are distance one apart.
 Moreover, when an interface jumps to another interface, both interfaces annihilate.
 Using that symmetric random walks are recurrent in one dimension, it is straightforward to conclude that the set of interfaces
 goes extinct.
 This translates into a clustering of the process, which is defined as the fact that the probability that two lattice sites have
 different types tends to 0 as time goes to infinity, i.e.,
\begin{equation}
\label{eq:cluster}
 \lim_{t \to \infty} \ P \,(\eta_t (x) \neq \eta_t (y)) \ = \ 0 \quad \hbox{for all} \ x, y \in \Z.
\end{equation}
 The behavior of the one-dimensional process can thus be summarized as follows.
\begin{theor}
\label{th:spatial-1D}
 Assume that $d = 1$. Then,
\begin{enumerate}
 \item Excluding the static case $\theta_1 = \theta_2 = 1$, the process clusters. \vspace{2pt}
 \item Assume that $\theta_1 \neq \theta_2$. Then type $i$ with $\theta_i = \max (\theta_1, \theta_2)$ wins.
\end{enumerate}
\end{theor}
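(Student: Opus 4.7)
The plan is to carry out in detail the heuristic program sketched by the authors just before the statement, treating the two assertions independently and leaning on the fact that in one dimension the local geometry of the process is completely captured by the evolution of the interfaces in the dual lattice $\Z + \frac{1}{2}$.

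For statement (2), I would first analyze the step initial condition $\eta_0 (x) = 1$ for $x \leq 0$ and $\eta_0 (x) = 2$ for $x \geq 1$. With this configuration only the two boundary sites $0$ and $1$ can ever change type, since every other site has two identical neighbors and so retains its type after any update. Applying \eqref{eq:proba} directly, site $0$ flips at rate $1 - \theta_1$ and site $1$ flips at rate $1 - \theta_2$. Because updates occur at the arrival times of independent rate-one Poisson processes, the unique interface $X_t$ is a continuous-time nearest-neighbor random walk with left-jump rate $1 - \theta_1$ and right-jump rate $1 - \theta_2$, hence mean drift $\theta_1 - \theta_2$. When $\theta_1 < \theta_2$ we have $X_t \to -\infty$ almost surely, which is exactly \eqref{eq:invasion} for species $2$. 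For a general initial condition with infinitely many sites of each type, interfaces alternate in the dual lattice between $12$-interfaces and $21$-interfaces; the same boundary computation shows that the former drift left at rate $\theta_2 - \theta_1$ while the latter drift right at the same rate, and no new interface can be created inside a monochromatic run. Hence every type-$1$ island is squeezed from both sides, and a coupling with two independent biased random walks shows that its lifetime is almost surely finite, forcing any fixed site $x$ to be eventually of type $2$.

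For statement (1), in the neutral case $\theta_1 = \theta_2 = \theta < 1$ the two interface jump rates become equal, so the interface configuration evolves as a system of continuous-time \emph{symmetric} nearest-neighbor random walks on $\Z + \frac{1}{2}$ that annihilate upon collision. Isolated interfaces perform independent symmetric walks; the only subtle case occurs when two interfaces sit at distance one, where the enclosed singleton site can flip and remove both interfaces in a single update. Although this collision rule is not exactly that of textbook annihilating random walks, a coupling shows that pairwise meeting times stochastically dominate those of genuine annihilating symmetric random walks, which are almost surely finite by the recurrence of one-dimensional symmetric random walks. For any two sites $x, y \in \Z$, the event $\{\eta_t (x) \neq \eta_t (y) \}$ requires at least one interface in the finite segment between them, and since every interface is eventually annihilated this event has vanishing probability, which establishes \eqref{eq:cluster}.

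The main obstacle is not any of the individual computations above but the passage from the single-interface analysis to configurations with infinitely many interfaces: I would need to construct a graphical coupling that realizes the particle system together with an auxiliary process of (biased or symmetric) annihilating random walks on the same probability space, and then verify the required stochastic domination. The delicate points are that interface motions are not strictly independent when several interfaces are close and that the distance-one annihilation rule must be tracked carefully in both regimes; handling these interactions uniformly over the initial configuration is what consumes most of the technical effort, and this is precisely what is carried out in the companion paper \cite{lanchier_2010}.
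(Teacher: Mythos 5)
Your proposal is correct and follows essentially the same route as the paper, whose own argument for Theorem \ref{th:spatial-1D} is exactly the interface heuristic you elaborate (biased interface random walks when $\theta_1 \neq \theta_2$, annihilating symmetric random walks plus one-dimensional recurrence in the neutral case), with full rigor deferred to Section 2 of \cite{lanchier_2010}. Two small repairs: part (1) for $\theta_1 \neq \theta_2$ should be observed to follow immediately from part (2), since a winning type forces $P(\eta_t(x) \neq \eta_t(y)) \to 0$; and your stochastic domination of meeting times is stated in the unusable direction (and in fact neither direction holds uniformly in $\theta$, since an adjacent pair annihilates at rate $1$ and separates at rate $2(1-\theta)$), so the extinction of a pair of interfaces is better argued by noting that each encounter ends in annihilation with probability at least $1/(3 - 2\theta) > 0$ and that encounters recur almost surely by recurrence of the symmetric difference walk.
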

 We refer the reader to \cite{lanchier_2010}, Section 2, for a rigorous proof of these two statements.
 In contrast with the predictions based on the mean-field approximation, Theorem \ref{th:spatial-1D} indicates that, except in the
 trivial case when the process is static, coexistence is never possible in one dimension even when both species cooperate strongly
 by providing resources that increase the fitness of the other species.
 Moreover, while the mean-field model exhibits bistability when both species are defectors, the spatial model always has
 a ``dominant'' type when $\theta_1 \neq \theta_2$, that is a type that wins even when starting at very low density.
 This dominant species is always the least cooperative one.
 The most interesting behavior appears in the neutral case when the set of interfaces is roughly a collection of symmetric
 annihilating random walks.
 Results conjectured by Erd\H os and Ney \cite{erdos_ney_1974} and proved later by Schwartz \cite{schwartz_1978} show that such
 a system is site recurrent, meaning that, even though the set of interfaces goes extinct due to recurrent annihilations, each site
 of the dual lattice is visited infinitely often by an interface.
 This indicates that the type at each site alternates infinitely often, thus giving the impression of coexistence.
 However, coexistence is not, strictly speaking, possible in the sense that there is no equilibrium in which
 both types are present:
 except in the static case, there are only two stationary distributions, namely the ones that correspond to the configurations in
 which all sites are of type 1 or all sites are of type 2.

% % % % % % % % % % % % % % % % % % % % % % % % % % % % % % % % % % % % % % % % % % % % % % % % % % % % % % % % % % % % % % % % % % % % % 

\subsection*{The two-dimensional two-type process.}
 The analysis of the two-dimensional process is more difficult since, due to the geometry of the spatial structure, it cannot
 be reduced to a simple analysis of the interfaces.
 The idea is to look at key particular cases and then apply standard probabilistic techniques such as coupling argument, block
 construction and perturbation argument to obtain additional information about the process.
 Further insight for the two-dimensional process will be gained through numerical simulations, however, we point out that our
 analytical results easily extend to any dimension $d > 1$.
 Precisely, we focus on the two-dimensional two-type stochastic model when the parameters are given by each of the following
 four matrices
 $$ M_4 = \left(\hspace{-3pt} \begin{array}{cc} 1 - \ep & 0 \\ \ep & 1 \end{array} \hspace{-3pt} \right) \qquad
    M_5 = \left(\hspace{-3pt} \begin{array}{cc} 1 - \ep & 1 - \ep \\ 1 + \ep & 1 + \ep \end{array} \hspace{-3pt} \right) \qquad
    M_6 = \left(\hspace{-3pt} \begin{array}{cc} 0 & 1 \\ 1 & 0 \end{array} \hspace{-3pt} \right) \qquad
    M_7 = \left(\hspace{-3pt} \begin{array}{cc} 0 & 1 \\ 1 & 1 \end{array} \hspace{-3pt} \right) $$
 where the parameter $\ep$ will be typically chosen small and positive.
 These four matrices represent key points of the phase diagram from which we can deduce, using either monotonicity, symmetry
 or perturbation arguments, the behavior of the process in larger parameter regions.

\indent First, we assume that the local interactions are described by the matrix $M_4$.
 This case can be seen as a case where species 2 is an extreme defector as the second column indicates that this species
 produces resources that are exploited by its own species only, hence sites of type 2 never change their type.
 In contrast, each individual of type 1 has a positive probability to change its type at each update provided it has at
 least one neighbor of type 2.
 Thinking of site of type 2 as occupied and sites of type 1 as empty, the process becomes a pure birth process so it is obvious
 that type 2 wins.
 Standard arguments allow to extend the result to a certain topological neighborhood of the parameter matrix $M_4$ therefore we
 obtain the following result.
\begin{theor}
\label{th:spatial-pure-birth}
 For all $\theta_1 < 1$ there exists $\theta < 1$ such that type 2 wins whenever $\theta \leq \theta_2 \leq 1$.
\end{theor}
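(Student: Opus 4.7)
The strategy has two parts: first establish the result on the boundary $\theta_2 = 1$, where the dynamics degenerate to a monotone growth process, and then extend the conclusion to an open neighbourhood of this boundary by a block construction / perturbation argument.

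\emph{Step 1: the pure birth case $\theta_2 = 1$.} When $\theta_2 = 1$ we have $a_{1,2} = 0$, so at any update of a type 2 site the probability of switching is $a_{1,2} N_1 (x) / (\cdots) = 0$; hence type 2 sites are frozen and the set $\{x : \eta_t (x) = 2 \}$ is nondecreasing in $t$. At a type 1 site with at least one type 2 neighbour, the update rule \eqref{eq:proba} together with $\theta_1 < 1$ (equivalently $a_{2,1} > 0$) implies that an update yields type 2 with probability bounded below by some constant $c = c (\theta_1) > 0$. I would fix $x \in \Z^d$, choose almost surely a finite nearest-neighbour path from $x$ to some initial type 2 site (which exists because a nontrivial initial product measure contains infinitely many type 2 sites), and argue inductively backwards along this path, using the independence of the Poisson clocks at distinct sites, that each successive site becomes type 2 in finite time almost surely. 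Since the event $\{\eta_t (x) = 2 \}$ is increasing in $t$, this yields $P (\eta_t (x) = 2) \to 1$ as $t \to \infty$ for every $x$.

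\emph{Step 2: perturbation via a block construction.} For $\theta_2 < 1$, type 2 sites can flip back to type 1, but only at a rate proportional to $1 - \theta_2$, which is small when $\theta_2$ is close to $1$. The plan is a Bezuidenhout--Grimmett style renormalization: fix large constants $L$ and $T$ and declare a space-time block of side $L$ and duration $T$ \emph{good} if, starting from a prescribed initial configuration with enough type 2 in the central portion, the process at time $T$ exhibits type 2 on shifted sub-boxes of the neighbouring blocks. By Step 1, the good-block event has probability arbitrarily close to $1$ at $\theta_2 = 1$ for $L, T$ large enough. Because the dynamics restricted to a finite space-time box involve only finitely many Poisson arrivals and auxiliary uniform random variables, the probability of the good-block event depends continuously on the entries of the matrix, hence on $\theta_2$; therefore for $\theta_2$ sufficiently close to $1$ it still exceeds the threshold of supercritical oriented site percolation. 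A standard comparison with oriented percolation on the renormalized lattice then shows that type 2 spreads linearly in space, and combining this with the fact that a site of type 2 surrounded by type 2 neighbours is absorbing (since then $N_1 = 0$ at that site) yields $P (\eta_t (x) = 2) \to 1$.

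\emph{Main obstacle.} The principal difficulty is the absence of any useful stochastic monotonicity in the parameter $\theta_2$: one cannot simply couple the processes at two values of $\theta_2$ and dominate one by the other, which is why a block construction is unavoidable. The quantitative price is that the good-block probability must be pushed above the oriented-percolation threshold, which simultaneously dictates how large $L$ and $T$ must be and how close to $1$ the parameter $\theta_2$ must lie---exactly the existential quantifier $\theta < 1$ in the statement. The strict assumption $\theta_1 < 1$ enters critically in Step 1 to guarantee that the invasion constant $c (\theta_1)$ is bounded away from zero, so that the pure-birth process really does cover bounded regions in bounded time with probability close to one; degradation of this constant as $\theta_1 \uparrow 1$ is the reason the threshold $\theta$ is allowed to depend on $\theta_1$.
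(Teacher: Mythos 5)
Your proposal follows essentially the same route as the paper: the paper's (sketched) argument also treats the matrix $M_4$, i.e.\ the boundary case $\theta_2 = 1$, as a pure birth process in which type $2$ sites are frozen and type $1$ sites adjacent to type $2$ flip at a uniformly positive rate (using $\theta_1 < 1$), and then invokes ``standard arguments'' --- precisely the block construction / perturbation comparison with supercritical oriented percolation that you describe --- to extend the conclusion to a neighborhood of $\theta_2 = 1$, deferring the details to Section 4 of the companion paper \cite{lanchier_2010}. Your plan is a reasonable and correct filling-in of that same outline, including the correct observation that the perturbed process is no longer monotone and that one must separately argue that type $2$ regions become absorbing to upgrade survival to $P(\eta_t(x) = 2) \to 1$.
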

 We refer the reader to \cite{lanchier_2010}, Section 4, for more details about the proof.
 By symmetry, we also find a parameter region in which individuals of species 1 outcompete individuals of species 2.
 Note that Theorem \ref{th:spatial-pure-birth} holds even when species 2 starts at a very low density, which shows a new
 disagreement between spatial and nonspatial models: similarly to the one-dimensional process, there is a parameter region in
 which species 2 is the dominant species for the spatial model whereas the mean-field model is bistable.

\indent Assuming now that the dynamics are dictated by the matrix $M_5$ with $\ep = 0$, the resources are equally shared
 by both types so the new type at each update is simply chosen uniformly at random from the neighborhood.
 This process is known as the voter model introduced in \cite{clifford_sudbury_1973, holley_liggett_1975}.
 There exists a duality relationship between the voter model and coalescing random walks that allows to prove that clustering
 occurs in the sense of \eqref{eq:cluster} in one and two dimensions whereas there exists a stationary distribution in
 which both types coexist in higher dimensions.
 Taking $\ep > 0$ in the second matrix, a similar duality relationship between the process and a certain system of random walks
 allows to show that type 2 wins.
 This result is expected since in this case species 2 is a cheater: it has a higher ability than species 1 to exploit resources
 of either type.
 Since this holds for arbitrarily small parameter $\ep > 0$ and the probability that species 2 outcompetes species 1 is nonincreasing
 with respect to $\theta_1$ and nondecreasing with respect to $\theta_2$, we obtain the following result.
\begin{theor}
\label{th:spatial-invasion}
 Assume that $\theta_1 < \frac{1}{2} < \theta_2$. Then, in any dimension, type 2 wins.
\end{theor}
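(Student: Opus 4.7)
The plan is to combine a direct analysis of the dynamics dictated by $M_5$ with a monotonicity argument in the parameters $(\theta_1, \theta_2)$. First I would observe that, for the matrix $M_5$, the coefficient $a_{i,j}$ depends only on $i$, so the update probability $a_{i,j} N_i (x) / (a_{1,j} N_1 (x) + a_{2,j} N_2 (x))$ is independent of the current type $j$ at site $x$. The dynamics therefore reduce to the biased voter model on $\Z^d$ in which, at each update, the new type is chosen as a neighbor's type weighted by $1 - \ep$ for type 1 and $1 + \ep$ for type 2, giving species 2 a selective advantage $(1 + \ep)/(1 - \ep) > 1$. A classical duality relates the biased voter model to a system of branching and coalescing random walks. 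For any $\ep > 0$ the dual process is supercritical, and the Bramson--Griffeath shape theorem (or, in dimension $d = 1$, a direct interface argument analogous to the proof of Theorem~\ref{th:spatial-1D}) implies that, starting from any translation invariant initial condition with a positive density of 2's, $P (\eta_t (x) = 2) \to 1$ as $t \to \infty$.

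Next I would lift this result to arbitrary parameters $\theta_1 < 1/2 < \theta_2$. Given such parameters, choose $\ep > 0$ small enough that both $(1 - \ep)/2 \geq \theta_1$ and $(1 + \ep)/2 \leq \theta_2$ hold; this is possible as soon as $\ep \leq \min (1 - 2 \theta_1, 2 \theta_2 - 1)$. The matrix $M_5$ with this value of $\ep$ then has parameters at most as favorable to species 2 as the target matrix. A monotonicity argument shows that the probability that species 2 outcompetes species 1 is nonincreasing in $\theta_1$ and nondecreasing in $\theta_2$, so victory of species 2 in the $M_5$ process forces victory of species 2 in the target process as well. Combined with the biased voter model analysis of the first step, this yields the theorem in every dimension.

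The main obstacle is the monotonicity in the parameters. Attractiveness of a single process (monotonicity with respect to the initial condition) is routine, but coupling two processes with different matrices is delicate at sites where the two processes disagree: the naive common-uniform coupling fails to preserve the pointwise ordering whenever one process has type 1 and the other type 2, because the sum of the two relevant transition probabilities $p_{2 \mid 1}^\eta + p_{1 \mid 2}^\xi$ can exceed 1. One has to exploit the specific structure $\theta_1 < 1/2 < \theta_2$ (which ensures that the dynamics of both processes push toward type 2) or build the coupling around an auxiliary comparison system. The rigorous implementation of this monotonicity is carried out in the companion paper \cite{lanchier_2010}.
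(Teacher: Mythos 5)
Your proposal follows essentially the same route as the paper: identify the $M_5$ process as a biased voter model whose duality with a system of random walks shows type 2 wins for every $\ep > 0$, then extend to all $\theta_1 < \frac{1}{2} < \theta_2$ by monotonicity of the winning probability in $(\theta_1, \theta_2)$, with the rigorous coupling deferred to the companion paper \cite{lanchier_2010}. Your explicit choice $\ep \leq \min (1 - 2 \theta_1, 2 \theta_2 - 1)$ and your remarks on why the naive coupling fails are consistent with, and slightly more detailed than, the paper's own sketch.
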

 We refer to Section 3 in \cite{lanchier_2010} for a rigorous proof of this theorem.
 By symmetry, an analogous result can be deduced by exchanging the role of species 1 and 2.
 In particular, Theorem \ref{th:spatial-invasion} shows that, in any spatial dimension, if one species is a cooperator and the
 other species a defector, which results in the presence of a cheater, then the cheater is always the dominant species.
 This agrees with the predictions based on the mean-field model.

\indent The analytical results obtained when the interactions are described by the last two matrices are more interesting
 from a biological perspective.
 First, we observe that, when the parameters are given by the matrix $M_6$, individuals of either type change their
 type at each update whenever there is at least one individual of the other type in their neighborhood which, in contrast with
 the voter model, leads to dynamics that somewhat favor the local minority.
 This corresponds to a case of extreme cooperation in which each type only provides resources to the other type.
 This process is known as the threshold voter model for which Liggett \cite{liggett_1994} has proved that coexistence occurs
 in any dimension $d > 1$.
 Relying on perturbation techniques, coexistence can be extended to a certain topological neighborhood of the matrix $M_6$.
\begin{theor}
\label{th:spatial-coexistence}
 If $d > 1$ then there is $\ep > 0$ such that coexistence occurs when $\max (\theta_1, \theta_2) < \ep$.
\end{theor}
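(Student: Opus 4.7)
The plan is to perturb around the threshold voter model, which sits at the corner $\theta_1 = \theta_2 = 0$ (parameter matrix $M_6$) and, by Liggett \cite{liggett_1994}, already exhibits coexistence in $d > 1$. I would take coexistence of the spatial model to mean the existence of a translation-invariant stationary distribution that assigns positive density to each of the two types, which is the natural spatial analog of strong coexistence. The strategy is the standard Bezuidenhout--Grimmett / Durrett block construction: convert Liggett's coexistence theorem into a quantitative finite space-time criterion, show that this criterion is robust under small perturbations of the flip rates, and then dominate a supercritical $k$-dependent oriented percolation to conclude.

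First, from Liggett's coexistence theorem I would extract a \emph{restart} estimate of the following form: there exist integers $L, T \geq 1$ and a constant $\delta > 0$ such that, for the threshold voter model started from any configuration containing at least one site of each type inside the box $B_L = [-L,L]^d \cap \Z^d$, with probability at least $1 - \delta$ the configuration at time $T$ contains at least one site of each type in every translated box $3L \mathbf{e} + B_L$ for each of the $2d$ coordinate unit vectors $\pm \mathbf{e}$. This is the standard input to a block construction. One obtains it by combining Liggett's translation-invariant non-trivial stationary measure $\nu$ (symmetric under $1 \leftrightarrow 2$) with a local modification / finite-range comparison on the graphical representation.

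Next, the flip rates of the spatial model are continuous functions of $(\theta_1, \theta_2)$, and inside the finite space-time window $B_{5L} \times [0,T]$ only a Poisson$(\lambda)$ number of updates occur with $\lambda = (2L{+}1)^d T$. Using a joint graphical construction in which both the threshold voter and the perturbed process use the same Poisson clocks and the same uniform marks at each update, the two processes agree on $B_{5L} \times [0,T]$ except on an event whose probability tends to $0$ as $\max(\theta_1, \theta_2) \to 0$. Hence there exists $\ep > 0$ such that, whenever $\max(\theta_1, \theta_2) < \ep$, the restart event of the previous paragraph still has probability at least $1 - 2\delta$ for the perturbed process. The restart event is measurable with respect to a bounded region of the graphical representation, so the induced block process is $k$-dependent for some fixed $k$; by choosing $\delta$ small enough in the first step, it dominates a supercritical oriented percolation cluster (see Durrett \cite{durrett_levin_1994a} for the general comparison lemma). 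Positive density of both types in the percolation cluster translates into $\liminf_{t \to \infty} P(\eta_t(0) = i) > 0$ for $i = 1, 2$, and any subsequential weak limit of the Cesàro averages is a translation-invariant stationary measure with both types present.

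The main obstacle is the first step: Liggett's argument yields the existence of $\nu$ via a clever interface-density inequality but does not, as stated, supply the quantitative finite-box restart estimate required to feed a block construction. The cleanest route I see is to start the threshold voter from $\nu$, note that the restart event has positive probability under $\nu$ by ergodicity, and then transfer this to a \emph{deterministic} initial condition confined to $B_L$ by combining the finite range of the dynamics with a finite energy / local surgery argument; attractiveness is unavailable here because the threshold voter model is not monotone, which is precisely why one must argue through the block construction rather than through a direct stochastic domination. Once this finite-box statement is in hand, the perturbation and percolation comparison steps are routine and provide the required $\ep$.
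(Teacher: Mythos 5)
Your overall strategy coincides with the one the paper follows: Theorem \ref{th:spatial-coexistence} is obtained by viewing the corner $\theta_1=\theta_2=0$ as the threshold voter model $M_6$, invoking Liggett \cite{liggett_1994} for coexistence there, and extending to a neighborhood by perturbation. The paper itself gives no details and defers the entire argument to Section~5 of \cite{lanchier_2010}, so the real question is whether your sketch actually closes the argument. It does not, and the gap sits exactly where you suspect it does, but it is worse than an unproved lemma: the restart estimate you formulate is false as stated. Starting the threshold voter model from a configuration with a \emph{single} site of type $1$ inside $B_L$ and type $2$ elsewhere, that site flips at rate $1$ while each of its neighbors independently flips at rate $1$, so with probability bounded away from $0$ (uniformly in $L$ and $T$) type $1$ disappears from the region before it propagates. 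Hence no choice of $L,T$ makes the event hold with probability $1-\delta$ for small $\delta$ from ``any configuration containing at least one site of each type.'' A block construction for coexistence must instead be anchored on a specific well-chosen local pattern (or a positive density of interfaces) that regenerates itself, and identifying such a pattern for a non-monotone process is the entire technical content of the theorem.

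Your proposed repair --- start from Liggett's stationary measure $\nu$, observe the good event has positive probability by ergodicity, and transfer to a deterministic seed by finite energy / local surgery --- cannot deliver what the comparison lemma needs. Surgery and ergodicity give you a good event of \emph{positive} probability; the $k$-dependent oriented percolation comparison requires the good event to have probability exceeding a threshold close to $1$, and without attractiveness there is no mechanism for boosting the former to the latter by iteration. Liggett's proof goes through the equivalence with survival of the threshold-one contact process and does not produce finite space--time spreading estimates of the required strength, so the quantitative input genuinely has to be built from scratch rather than extracted from his theorem. Your coupling/perturbation step and the passage from a supercritical percolation cluster to a nontrivial translation-invariant stationary measure are both sound and uniform in the neighborhood configuration (the flip probabilities indeed converge uniformly to those of $M_6$ as $\max(\theta_1,\theta_2)\to 0$), so the proposal would be complete if and only if the block estimate for $M_6$ were supplied; as written, the central step is asserted rather than proved.
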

 See Section 5 in \cite{lanchier_2010} for a proof of this theorem.
 Note that, while  Theorem \ref{th:spatial-coexistence} shows that increasing the spatial dimension allows two species to coexist,
 it also requires the cooperation to be strong enough, whereas results for the mean-field model indicate more generally that
 cooperation always leads to coexistence.

\indent Interestingly, our next result shows that the inclusion of local interactions indeed translates into a reduction of the
 coexistence region.
 Before stating this result, we first observe that, when the interactions are dictated by the matrix $M_7$, the process consists
 of a mixture of a voter model and a threshold voter model:
 at each update, type 1 is replaced by type 2 whenever it has at least one type 2 in its neighborhood whereas type 2 is simply
 replaced by a type chosen uniformly at random from its neighborhood.
 In this case, there is again a duality relationship between the process and a certain system of random walks whose analysis
 reveals that type 2 wins.
 Moreover, the techniques of the proof allow for the application of a perturbation argument, so the same conclusion holds in a
 certain topological neighborhood of the matrix $M_7$ which induces the existence of a parameter region for which coexistence
 occurs for the mean-field model but not the spatial model.
\begin{theor}
\label{th:spatial-reduction}
 There is $\ep > 0$ such that type 2 wins whenever $\theta_1 < \ep$ and $\theta_2 > \frac{1}{2} - \ep$.
\end{theor}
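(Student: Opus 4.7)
The plan follows the standard three-stage strategy for this kind of result: first, analyze the extreme matrix $M_7$ (for which $\theta_1 = 0$, $\theta_2 = 1/2$) via a duality with a system of random walks; second, show that type $2$ wins at $M_7$ by proving the dual grows unboundedly; third, transfer the conclusion to an open neighborhood via a block construction. Under $M_7$, a site currently of type $2$ updates by the voter rule (copying a uniformly chosen neighbor), while a site currently of type $1$ is replaced by type $2$ at rate $\mathbf{1}\{N_2(x) \geq 1\}$ and remains of type $1$ only when every neighbor is also of type $1$. Using Harris's graphical representation --- a rate-$1$ Poisson clock at each site plus an independent uniform ``voter arrow'' at each arrival --- I would construct a backward dual $\xi_s^{(x,t)}$ starting from $\{x\}$ at time $t$. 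Each clock ring traced backwards either follows a single voter arrow (as in the classical voter dual) or, when it corresponds to a threshold update, expands the current ancestor to all $2d$ neighbors, since $\eta_s(x) = 1$ requires type $1$ at every neighbor. The resulting identity is
\begin{equation*}
  \eta_t(x) = 1 \iff \eta_0(y) = 1 \text{ for every } y \in \xi_t^{(x,t)}.
\end{equation*}

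Starting the forward process from a product measure with type-$2$ density $\rho > 0$, this identity yields $P(\eta_t(x) = 1) \leq \mathbb{E}[(1-\rho)^{|\xi_t|}]$, so it suffices to prove $|\xi_s| \to \infty$ almost surely. The voter arrows contribute only coalescence and never kill ancestors, while the threshold expansions add up to $2d-1$ new ancestors per event; but since these sit at neighboring sites of a single vertex, they are rapidly absorbed by subsequent voter arrows. A direct comparison with a branching-coalescing random walk whose expected cardinality grows linearly delivers the required blow-up and hence $P(\eta_t(x) = 1) \to 0$ at $M_7$.

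To pass to the full two-dimensional region $\theta_1 < \epsilon$, $\theta_2 > 1/2 - \epsilon$, apply a Bezuidenhout--Grimmett block construction. Fix a large space-time box $B = [-L,L]^d \times [0,T]$ and let $E(L,T)$ be the event that, under the dynamics with matrix $M_7$ started from a single type-$2$ seed in $B$, every site in $[-L/2, L/2]^d$ is of type $2$ at time $T$. By the first two paragraphs, $\Pr(E(L,T)) \to 1$ as $L, T \to \infty$. Because the flip rates are continuous in the matrix entries and $B$ contains only finitely many Poisson marks in expectation, for each $(L,T)$ there exists $\epsilon(L,T) > 0$ such that the analogous event under any matrix with $\theta_1 < \epsilon$ and $\theta_2 > 1/2 - \epsilon$ still has probability exceeding the critical threshold of a one-dependent oriented site percolation on $\mathbb{Z}^d \times \mathbb{Z}$. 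Standard rescaling then yields percolation of type-$2$ boxes in space-time and therefore type $2$ wins throughout the claimed region.

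The main obstacle is the second step: the $2d$-fold branching at each threshold expansion is delicately balanced against the nearest-neighbor coalescence of the voter arrows, and naive moment estimates on $|\xi_s|$ are inconclusive. Controlling this competition between branching and coalescence in the dual is the principal technical content of the argument, treated rigorously in the companion paper \cite{lanchier_2010}; once that estimate is in hand, the duality setup and the perturbation argument are standard pieces of the interacting particle systems toolbox.
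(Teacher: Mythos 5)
Your three-stage outline (analyze the extreme matrix $M_7$, prove that type 2 wins there by duality, then perturb to an open region) is exactly the route the paper itself sketches before deferring all details to Section 6 of the companion paper \cite{lanchier_2010}, so the architecture is not the issue. The issue is that the duality identity at the heart of your second step is false. Under $M_7$ the nature of an update at $x$ (voter versus threshold) is decided by the \emph{current} type of $x$, which is not measurable with respect to the graphical representation, so the set-valued dual you describe is not well defined from the Harris construction alone; worse, no such dual can exist, because the event $\{\eta_t(x)=1\}$ is not a decreasing function of the initial set of type-2 sites, whereas the event $\{\eta_0 \equiv 1 \hbox{ on } \xi_t\}$ is. Concretely, suppose the only update before time $t$ occurs at $x$, the voter arrow points to a neighbor $y$ with $\eta_0(y)=1$, and some other neighbor $z$ has $\eta_0(z)=2$. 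If $\eta_0(x)=1$ the threshold rule applies and $x$ flips to type 2; if instead $\eta_0(x)=2$ the voter rule applies and $x$ copies $y$ and becomes type 1. Adding a 2 at $x$ thus produces a 1 at $x$ at time $t$, so the process is not additive and not even attractive, and the bound $P(\eta_t(x)=1)\leq E[(1-\rho)^{|\xi_t|}]$ does not follow.

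A workable dual for this non-additive process has to carry more structure: a branching tree in which each branch point distinguishes the two mutually exclusive clauses ($x$ was 1 and every neighbor was 1, versus $x$ was 2 and the selected neighbor was 1), with the types resolved by a forward recursion along the tree from time 0, in the style of Neuhauser--Pacala and Lanchier--Neuhauser. Without that, the only information extractable from the graphical representation alone is the single-arrow (voter) genealogy, which gives no winner. Since you yourself identify the analysis of the dual as the principal technical content and defer it to the companion paper, and since the object you propose to analyze there does not satisfy the identity you need, the second step collapses and the argument has a genuine gap. A secondary, more minor point: continuity of the finitely many update probabilities in a space-time box only extends the conclusion to a small neighborhood of $(\theta_1,\theta_2)=(0,1/2)$; to reach the whole region $\theta_1<\ep$, $\theta_2>\frac{1}{2}-\ep$ you also need the monotonicity of the winning probability in $\theta_1$ and $\theta_2$ that the paper invokes before Theorem \ref{th:spatial-invasion}.
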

 The proof of this theorem can be found in \cite{lanchier_2010}, Section 6.
 This result indicates that even though two species cooperate, they cannot coexist in space if their relationship has too much
 asymmetry, that is when one species cooperates much more than the other.
 In such situation, the dominant species is again the least cooperative one.

%%%%%%%%%%%%%%%%%%%%%%%%%%%%%%%%%%%%%%%%%%%%%%%%%%%%%%%%%%%%%%%%%%%%%%%%%%%%%%%%%%%%%%%%%%%%%%%%%%%%%%%%%%%%%%%%%%%%%%%%%%%%%%%%%%%%%%%%%%

\section{The role of space}
\label{sec:comparison}

\indent In order to understand the role of local interactions (and stochasticity), we confront the analytical results obtained
 for the nonspatial and spatial models as well as additional numerical results provided in this section for the spatial model.
 Since numerical simulations of stochastic spatial models are somewhat difficult to interpret, we start by devoting few
 lines to mention important theoretical results that justify the approach we have followed to draw general conclusions from
 particular simulation results.
 Numerical and analytical results are used to first obtain a complete picture of the differences between both models when
 only two species are present.
 We note however that a complete understanding of the stochastic three-type model based on simulations is out of reach due to
 a too large number of parameters.
 Our approach is to use the results obtained for the nonspatial model and the spatial two-type model as a guide to
 find certain three-dimensional submanifolds of the set of parameters along which interesting behaviors emerge.
 Numerical simulations along these manifolds are then performed in order to obtain three-dimensional phase diagrams of the
 models when the set of parameters is restricted to particular subsets.

% % % % % % % % % % % % % % % % % % % % % % % % % % % % % % % % % % % % % % % % % % % % % % % % % % % % % % % % % % % % % % % % % % % % % 

\subsection*{Numerical simulations.}
 In the numerical simulations of the spatial model, the population evolves on a $400 \times 400$ lattice with periodic
 boundary conditions.
 That is, sites on the bottom row are neighbors of those on the top row, while sites on the left edge are neighbors of those
 on the right edge.
 In addition, the process starts from a uniform product measure, meaning that the types at different sites are initially
 independent and of either type with probability $1/n$ where $n$ denotes the total number of species, so sites are equally likely
 to be of either type.
 The process always halts after exactly $3.2 \times 10^8$ updates which corresponds to approximately 2000 units of time, which
 is simply obtained by dividing the number of updates by the number of sites and using the fact that each site is updated in
 average once per unit of time. 

\indent The first difficulty in interpreting correctly these spatial simulations is that the stochastic process on a finite
 connected graph always converges to a configuration in which all sites have the same type, therefore coexistence is not possible
 on a $400 \times 400$ lattice, whereas our analytical results show that two types can coexist on the infinite lattice.
 However, rigorous research in the topic of interacting particle systems indicates that, in the presence of a dominant type, this
 type typically invades space linearly in all the directions.
 In contrast, analytical results notably about the voter model on the torus \cite{cox_1989} show that, in the absence
 of a dominant type, the time to fixation of the process on large but finite graphs is sufficiently large that numerical simulations
 indeed reflect a transient behavior of the finite system that is symptomatic of the long-term behavior of its infinite analog.
 In particular, it is accurate to define a dominant type as a type which is able to invade our finite lattice in less
 than 2000 units of time while starting from a low density.

\indent The second difficulty is that the absence of a dominant type as defined above, namely the simultaneous survival of at
 least two types up to time 2000, is not always symptomatic of coexistence for the infinite analog.
 The reason is that, as proved in \cite{cox_1989} for the voter model, in case the infinite system clusters, the time to
 fixation of its finite analog is excessively large, which gives the impression of coexistence, whereas different types
 cannot coexist at equilibrium even for the infinite system.
 However, spatial correlations emerge quickly enough so that properties of the configuration of the finite system at
 time 2000 allow to speculate about the infinite system.
 More precisely, in the absence of a dominant type, the dichotomy between clustering and coexistence will be based on a
 quantity that we shall call \emph{clustering coefficient} and that we define as the percentage of edges that connect sites
 of the same type.
 Note that the clustering coefficient also gives a good approximation of the probability that two nearest neighbors are
 of the same type.
 To distinguish between clustering and coexistence, we invoke a result proved in \cite{cox_griffeath_1986} about the
 diffusive clustering of the two-dimensional voter model, which roughly indicates that the clustering of the process is
 extremely slow and motivates the use of the voter model as a test model.
 Numerical simulations of the voter model, which is obtained from our general model by choosing a matrix in which all the
 coefficients are equal, give the values 86\% and 81\% for the clustering coefficient of the two-type voter model and
 three-type voter model, respectively, at time 2000.
 Therefore, in the absence of a dominant type, we will assume that the infinite system clusters if the clustering
 coefficient of its finite analog is larger or equal to 86\% in the presence of two types and larger or equal to 81\%
 in the presence of three types, and that the process coexists otherwise.
 In conclusion, the long-term behavior of the infinite system is related to the clustering coefficient of its finite
 analog as follows:
\begin{enumerate}
 \item \emph{coexistence} occurs when the clustering coefficient of the finite system is smaller than that of the
  corresponding voter model, that is either 86\% or 81\%, \vspace{2pt}
 \item \emph{clustering} occurs when the clustering coefficient of the finite system is simultaneously larger than that of
  the corresponding voter model and different from 100\%, \vspace{2pt}
 \item existence of a \emph{dominant type} occurs when the clustering coefficient is equal to 100\%,
\end{enumerate}
 and we again point out that past research in the field of interacting particle systems allows to speculate with some
 confidence about the long-term behavior of the infinite system based on numerical simulations and the previous classification.

% % % % % % % % % % % % % % % % % % % % % % % % % % % % % % % % % % % % % % % % % % % % % % % % % % % % % % % % % % % % % % % % % % % % % 

\subsection*{The two-type models.}
 In the presence of two species, the dynamics only depends on two parameters, namely, the relative abilities $\theta_1$ and $\theta_2$
 of species 1 and 2, respectively, to exploit resources of their type.
 Therefore, the long term-behavior of the spatial and nonspatial models can be simply summarized in a two-dimensional phase diagram.
 The phase diagrams of the spatial and nonspatial models are depicted on the left-hand side and right-hand of Figure \ref{fig:two-type}.
 The phase diagram of the nonspatial model is completely understood analytically whereas the one of the spatial model is obtained from
 a combination of analytical and numerical results.
 Labels on the parameter regions surrounded with dashed lines refer to the theorems of Section \ref{sec:stochastic}.
 In both diagrams, the different phases are delineated with thick continuous lines, and we distinguish three cases.

\indent \emph{Presence of a cheater.}
 In defector-cooperator relationships, in which case we call the defector a cheater, analytical
 results for both the spatial and the nonspatial models indicate that the cheater outcompetes the other
 species (see Theorems \ref{th:global-2} and \ref{th:spatial-invasion}).
 The intuition behind this result is that, since the cheater has a better ability to exploit resources of either type, its fitness is
 always higher than the fitness of the other species \emph{regardless} of the configuration of species.
 Therefore, even when starting at very low density, the cheater always wins.

\indent \emph{Space and competition.}
 In the presence of competition, which we defined as the relationship between two defectors, analytical results for the
 nonspatial deterministic model indicate that the system is bistable (see Theorem \ref{th:global-2}).
 That is, the limiting behavior depends on the combination of both the parameter values and the initial densities.
 The parameters being fixed, provided the initial density of either species exceeds some critical threshold, this species
 outcompetes the other species.
 In contrast, analytical results for the spatial model (see Theorem \ref{th:spatial-pure-birth}) reveal the existence of
 parameter regions in which there is a dominant type that outcompetes the other species regardless of the initial configuration
 provided it starts with a positive density.
 Numerical results for the spatial model further indicate that when $\theta_1 \neq \theta_2$ there is a dominant type which is
 uniquely determined by the parameters of the system: the dominant type is always the least cooperative species (see the upper
 panel of Table \ref{tab:two-type}).
 Additional simulations have been performed to check that this type indeed outcompetes the other species even when it initially
 occupies only 5\% of the lattice sites.
 In the neutral case, the system clusters.
 The spatial correlations become stronger and stronger and the boundaries of the clusters sharper and sharper as the common value
 of the relative ability of both species to exploit their own resources increases.
 Observing in addition that under neutrality the number of lattice sites of a given type in the finite system is a Martingale,
 it can be proved that the probability that this type wins is simply equal to its initial density.

\begin{table}[t!]
  \centering
  \scalebox{0.32}{\input{tab-two-type.pstex_t}} \vspace{5pt}
  \caption{}
\label{tab:two-type}
\end{table}

\indent We now give a heuristic argument of the reason why competition translates into bistability in the absence of space whereas,
 excluding the neutral case, the dominant type is uniquely determined by the parameters of the system in the presence of a spatial
 structure.
 We first observe that if a species has a very high ability to exploit the resources it produces but a poor ability to exploit the
 other resources then, in the absence of space, the fitness of this species is roughly proportional to its density.
 Therefore, in the presence of competition, species at low density have a negative growth rate and species at high density a
 positive growth rate, which implies bistability.
 Including space in the form of local interactions drastically modifies the long-term behavior of the system due to the fact that
 individuals can only see their nearest neighbors rather than the whole system.
 More precisely, since an individual of either type can only appear next to a site already occupied by this type (the parent site),
 spatial correlations build up in such a way that the density of individuals of a given type seen by an individual of the same type
 is in average significantly larger in the interacting particle system than in the mean-field approximation.
 Regardless of the global densities, this fraction of homologous neighbors is roughly the same for both species at the interface
 between adjacent clusters.
 Since in addition the resources produced at a site are only available for the nearest neighbors, the global densities become
 irrelevant in determining the fitness of each individual and interfaces expand linearly in favor of the least cooperative species:
 regardless of its initial density, this species is the dominant type.

\begin{figure}[t!]
  \centering
  \scalebox{0.40}{\input{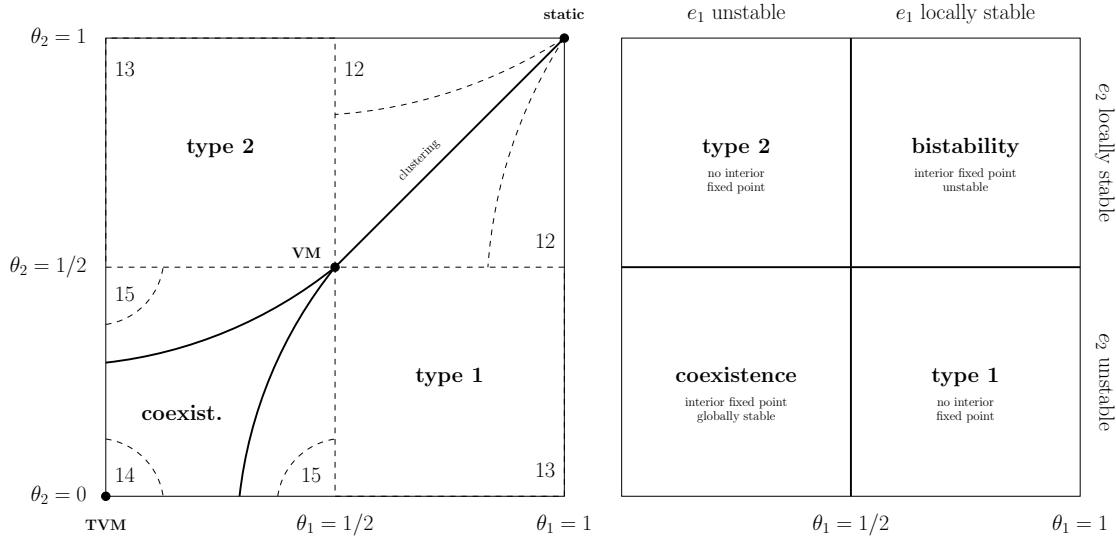}}
  \caption{\upshape Phase diagrams of the spatial and nonspatial two-type models.}
\label{fig:two-type}
\end{figure}

\indent \emph{Space and cooperation.}
 In the presence of cooperation, analytical results for the nonspatial model predict coexistence of both
 species (see Theorem \ref{th:global-2}).
 In contrast, analytical results for the spatial model indicate that coexistence is never possible in one
 dimension (see Theorem \ref{th:spatial-1D}).
 In higher dimensions, coexistence becomes possible although there are regions in which coexistence
 occurs for the nonspatial model but not for the spatial model (see Theorems \ref{th:spatial-coexistence} and \ref{th:spatial-reduction}).
 Numerical simulations of the two-dimensional finite system suggest that two cooperative species coexist in the neutral
 case so the coexistence region of the spatial model stretches up to the point that corresponds to voter model dynamics.
 As previously mentioned, analytical results for the voter model show that this critical point is included in the coexistence region if and
 only if the spatial dimension is strictly larger than two.
 There is however a wide range of parameters for which coexistence is not possible though both species cooperate (see the
 lower panel of Table \ref{tab:two-type}).
 Therefore, the inclusion of space translates into a reduction of the coexistence region in any spatial dimension.
 This effect is more pronounced in low dimensions and we strongly believe that the coexistence region of the spatial model approaches the
 one of the nonspatial model as the dimension tends to infinity.

\indent The intuition behind the fact that the inclusion of a spatial structure reduces the ability of species to coexist follows
 the lines of the heuristic argument introduced above.
 In cooperative mean-field systems, a species at very low density turns out to be in a very favorable environment due to the high
 density of the other species and so a large amount of resources to exploit:
 the smaller the density of a species, the larger the fitness of this species, which is a basic mechanism that leads to coexistence
 of both types.
 In contrast, the inclusion of local interactions translates as previously into the presence of spatial correlations that make
 the density of individuals of a given type seen by an individual of the other type significantly smaller in the interacting
 particle system than in the mean-field approximation.
 Therefore, in the presence of a spatial structure, cooperative species cannot fully benefit from the resources produced by the
 other species simply because these resources are geographically out of reach.
 Under neutrality, both species suffer local interactions equally so coexistence is possible, but when the parameters get closer
 to the region where one species is a cheater, individuals of the other species are unable to expand whenever they have at least
 one neighbor of their own type: large clusters of this species tend to shrink quickly while small clusters of this species
 dissolve eventually due to stochasticity.
 Note also that the number of neighbors is smaller, and so the fraction of neighbors of the same type larger, in low dimensions
 hence the harmful effect of space on coexistence is more pronounced in low dimensions.

% % % % % % % % % % % % % % % % % % % % % % % % % % % % % % % % % % % % % % % % % % % % % % % % % % % % % % % % % % % % % % % % % % % % % 

\subsection*{The three-type models.}
 The analysis of the mean-field approximation reveals that, in the presence of three species, the nonspatial model exhibits
 a wide variety of regimes.
 However, analytical and numerical results for the spatial two-type model indicate that the most interesting behaviors in terms
 of the inclusion of space emerge in parameter regions for which there is either bistability or coexistence for the mean-field
 approximation.
 Therefore, we mainly focus on the analogous cases for the three-type models by investigating the effect of space in regions
 for which there is either tristability or coexistence of all three species when space is absent.
 We note that, whereas in the presence of two species cooperation is a necessary and sufficient condition for coexistence in
 a nonspatial universe, in the presence of three species, there are additional strategies that promote global coexistence.
 For instance, coexistence occurs for what we shall call rock-paper-scissors dynamics in which any two species cannot coexist
 but all three species can.
 Interestingly, numerical simulations of the spatial three-type model reveal that the inclusion of space does not have the same
 effect on coexistence depending on the strategies species follow, suggesting that some of the strategies that promote
 coexistence are much more stable than others.

\indent In the presence of two species, each species is either a defector or a cooperator, which results in only three types of
 relationship: cheating, competition, and cooperation.
 Introducing an additional species increases significantly the number of types of relationships since a species can be a defector
 with respect to a second species but a cooperator with respect to the third one as in rock-paper-scissors dynamics.
 In order to extend to the three-type models results obtained for the two-type models, we first assume that any two species have
 the same ability to exploit resources produced by the third one so that each species is either a ``global'' defector or
 a ``global'' cooperator.
 Under this assumption, the dynamics only depend on three parameters being described by matrices that belong to the
 three-dimensional manifold that consists of the matrices of the form
 $$ M_8 \ = \ \left(\hspace{-3pt} \begin{array}{ccc}
      2 \theta_1 & 1 - \theta_2 & 1 - \theta_3 \\ 1 - \theta_1 & 2 \theta_2 & 1 - \theta_3 \\ 1 - \theta_1 & 1 - \theta_2 & 2 \theta_3 \end{array} \hspace{-3pt} \right). $$
 Note that species $i$ is a defector if $\theta_i > 1/3$ and a cooperator if $\theta_i < 1/3$.
 Following the same approach as for the two-type models and motivated by the analytical results for the nonspatial three-type
 model, we distinguish four cases depending on the number of defectors.

\indent \emph{Presence of a cheater.}
 In case one species is a global defector and the other two species are global cooperators, so that the defector is a cheater, analytical
 results for the nonspatial models indicate that the cheater outcompetes the other two species (see Theorem \ref{th:global-trivial}).
 Analytical results for the spatial two-type model easily extend to prove that the same holds including space in the
 form of local interactions.
 The intuition is the exact same as for the two-type model.
 The ability of the cheater to exploit resources of either type is larger or equal to the ability of the other two species
 to exploit the same resource.
 Although the cheater has the same ability of any of the other species to exploit the resources produced by the third one, since
 it has a better ability to exploit the resources it produces, its fitness is always strictly larger than that of the other
 species \emph{regardless} of the configuration of the system.

\indent \emph{Presence of two cheaters.}
 This case occurs when there are two global defectors and one global cooperator.
 That space is present or not, the altruist species is driven to extinction.
 This can be understood based again on the same heuristic argument, which indicates that, \emph{regardless} of the configuration
 of the system, the cooperator always has a strictly smaller fitness than the two defectors.
 The long-term behavior is therefore similar to that of the two-type models in the presence of competition.
 In the mean-field model, one of the two defectors outcompetes the other one, and the limiting densities depend on the
 combination of the parameter values and the initial densities: the system is bistable (see Theorem \ref{th:global-trivial}).
 In the interacting particle system, the dominant type is uniquely determined by the parameter values and is always the least
 cooperative species.
 In case of a symmetric behavior between both defectors, the community clusters.

\indent \emph{Space and competition.}
 Similarly to the two-type model, the most interesting behaviors emerge when all three species compete or all three species
 cooperate.
 In the presence of competition, that is when all three species are global defectors, the mean-field model is tristable:
 the trajectories converge to one of the three trivial equilibria in which only one species is present, and the limiting
 densities again depend on both the parameter values and the initial densities.
 Numerical simulations of the spatial model (see the upper panel of Table \ref{tab:matrix_8}), indicate that the dominant
 type is again the least cooperative species.
 In case of equality, additional behaviors can appear:
 clustering of the two species that have the (same) highest relative ability to exploit the resources they produce, or clustering
 of all three species in the neutral case.
 The disagreement between both models in the presence of competition follows from the same argument as for the two-type models.
 The density of individuals of a given type seen by an individual of the same type is in average significantly larger in the
 interacting particle system than in the mean-field approximation in such a way that global densities become irrelevant and
 interfaces expand linearly in favor of the species that has the largest relative ability to exploit its own resources, i.e.,
 the least cooperative one.

\begin{table}[t!]
  \centering
  \scalebox{0.32}{\input{tab-matrix_8.pstex_t}} \vspace{5pt}
  \caption{}
\label{tab:matrix_8}
\end{table}

\indent \emph{Coexistence and cooperation.}
 Recall that in the presence of three species, global cooperation alone doe not imply coexistence for the nonspatial model,
 since additional conditions about the nontrivial boundary equilibria are required (see statement 3 in Theorem \ref{th:permanence}).
 However, when the dynamics are described by the matrix $M_8$ one has
 $$ \begin{array}{rcl}
    \Delta_{1,2} & = & (2 a_{3,1} - a_{1,1} - a_{2,1}) (a_{1,2} - a_{2,2}) + (2 a_{3,2} - a_{1,2} - a_{2,2}) (a_{2,1} - a_{1,1}) \vspace{4pt} \\ & = &
     (1 - 3 \theta_1) (1 - 3 \theta_2) + (1 - 3 \theta_2) (1 - 3 \theta_1) \ = \ 2 \,(1 - 3 \theta_1) (1 - 3 \theta_2) \end{array} $$
 and similar expressions for $\Delta_{2,3}$ and $\Delta_{3,1}$.
 Therefore, in this special case, coexistence occurs in the nonspatial mean-field model if and only if species cooperate.
 In contrast, in the presence of global cooperation, the interacting particle system has no less than seven possible regimes that
 are all stable under small perturbations of the three parameters.
 The existence of all these regimes follows heuristically from the same argument that explains the reduction of the coexistence
 region for the spatial two-type model: the density of individuals of a given type seen by an individual of the other type is in
 average significantly smaller in the interacting particle system than in the mean-field approximation, which makes species
 significantly more cooperative than others unable to survive in the presence of space.
 When cooperation is strong or fair among all three species, then global coexistence occurs even in the presence of space.
 However, when two species are significantly less cooperative than the third species, these two species can coexist but outcompete
 the third one (see the middle panel of Table \ref{tab:matrix_8}).
 Similarly, when a single species is significantly less cooperative than the other two species, this species is the dominant
 type: it outcompetes the other two species even when starting from a low density (see the bottom panel of Table \ref{tab:matrix_8}).

\begin{figure}[t!]
  \centering
  \scalebox{0.36}{\input{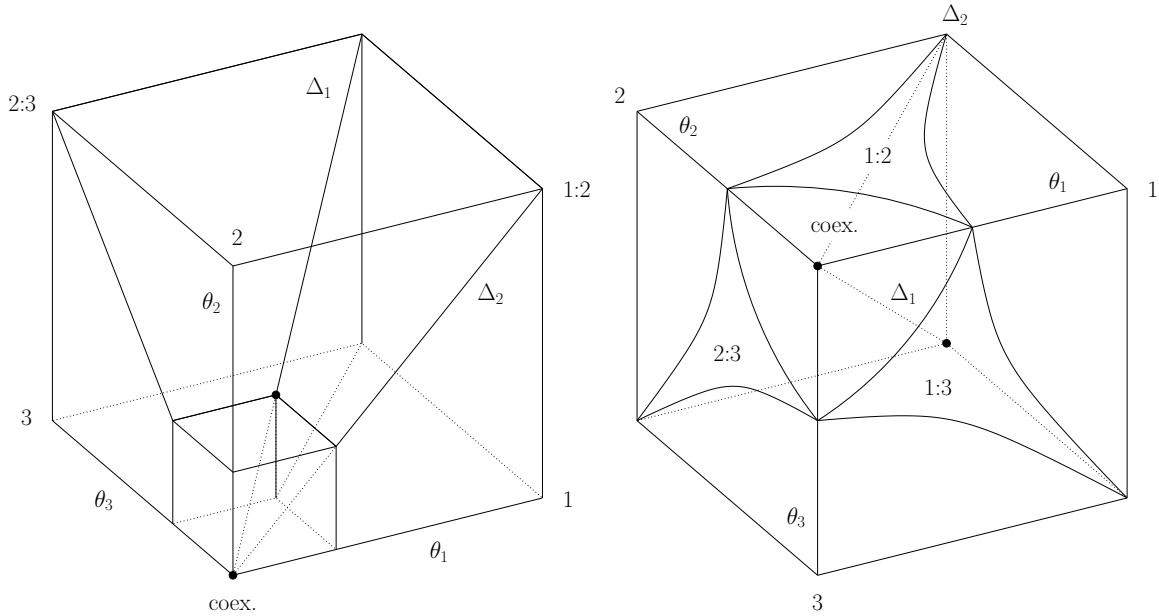}}
  \caption{\upshape Phase diagram of the spatial three-type spatial model restricted to matrices of the form $M_8$.}
\label{fig:three-type}
\end{figure}

\begin{figure}[t!]
\centering
 \includegraphics[width = 440pt]{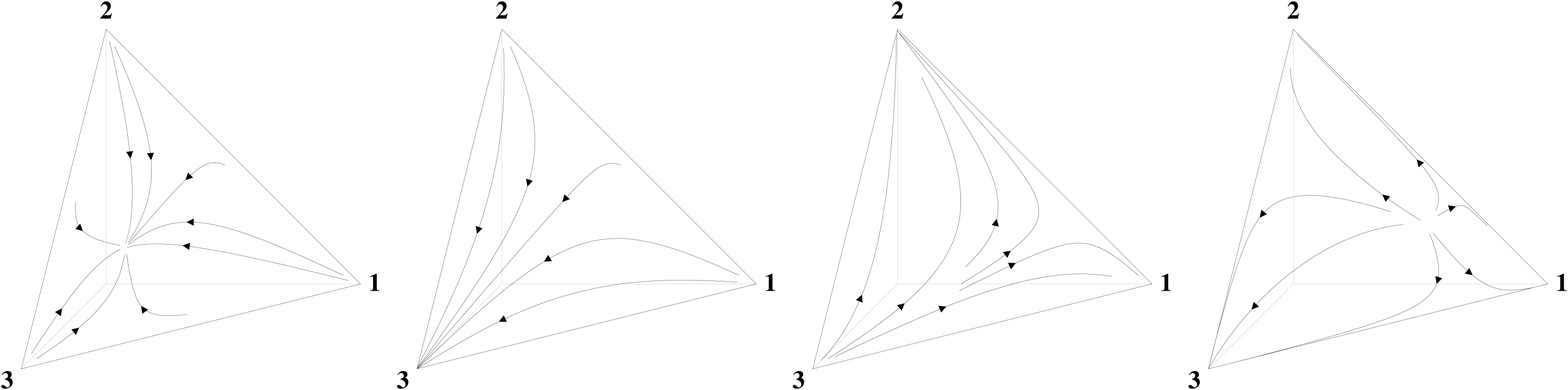}
\caption{\upshape Solution curves when the dynamics are described by $M_8$.}
%  picture 1 -- theta_1 = 0.15, theta_2 = 0.20, theta_3 = 0.25
%  picture 2 -- theta_1 = 0.15, theta_2 = 0.20, theta_3 = 0.50
%  picture 3 -- theta_1 = 0.40, theta_2 = 0.50, theta_3 = 0.30
%  picture 4 -- theta_1 = 0.50, theta_2 = 0.60, theta_3 = 0.70
\label{fig:curves}
\end{figure}

\indent In conclusion, the phase diagram of the nonspatial model when the set of parameters is restricted to matrices of the
 form $M_8$ can be deduced from Theorems \ref{th:global-trivial} and \ref{th:permanence}, and Corollary \ref{cor:tristability}.
 The unit cube representing the possible values of $\theta_1$, $\theta_2$ and $\theta_3$ is divided into eight regions which are
 delimited by the three two-dimensional hyperplanes with equation $\theta_i = 1/3$.
 In the presence of three cooperators, coexistence occurs.
 In the presence of two cooperators and one defector, the defector wins.
 In the presence of one cooperator and two defectors, the system is bistable.
 Finally, in the presence of three defectors, the system is tristable.
 We refer to Figure \ref{fig:curves} for the solution curves of the nonspatial model in these four cases.
 Finally, the three-dimensional phase diagram of the spatial model is depicted in Figure \ref{fig:three-type}.
 Excluding the cube with length edge $1/3$ at the bottom of the left picture, the phase diagram is divided into three regions
 delimited by three hyperplanes that intersect along the segment line, denoted by $\Delta_1$, that connects $(1/3, 1/3, 1/3)$ and $(1, 1, 1)$.
 Along this segment line, all three species cluster, along the hyperplanes two of the species outcompete the third one and cluster,
 and in the three regions delimited by the three hyperplanes there is a dominant type which is uniquely determined by the parameter values.
 The right picture shows an enlargement of the coexistence region which is included in the cube with length edge $1/3$ that corresponds
 to the region where all three pairs of species cooperate.
 This picture is drawn from the simulation results of Table \ref{tab:matrix_8}.
 Note that the spatial model exhibits seven regimes in this single cubic region.
 The first row of Figure \ref{fig:matrix_8} shows three snapshots of the spatial model when parameters are taken along the straight
 line $\Delta_2$.
 As the common value of $\theta_1$ and $\theta_2$ increases, the system crosses three different regimes:
 first all three species coexist, then species 1 and 2 coexist but outcompete species 3, and finally not only species 3 is driven
 to extinction but also species 1 and 2 cluster.
 The second row of the same figure shows three snapshots of the spatial model when parameters are taken along the straight line $\Delta_1$.
 In this neutral case, cooperation allows all three species to coexist while switching to competitive relationships, the system clusters,
 as shown by the last two snapshots.

\indent \emph{Coexistence of cheaters.}
 Recall that, according to Theorem \ref{th:permanence} and the discussion following the theorem, coexistence is possible in the
 absence of space regardless of the number of pairs of species that are in cooperative relationship or in defector-cooperator relationship,
 whereas coexistence cannot occur whenever one pair is competing.
 After looking at the first extreme case where all pairs are in a cooperative relationship, in which case the inclusion of space reduces
 the coexistence region, we now look at the other extreme case where all three pairs are in a defector-cooperator relationship that
 results in rock-paper-scissors dynamics.
 To understand the role of space in this case, we have performed numerical simulations of the spatial three-type model when the
 local interactions are described by matrices of the form
 $$ M_9 \ = \ \left(\hspace{-3pt} \begin{array}{ccc} \theta_1 & 0 & 1 \\ 1 & \theta_2 & 0 \\ 0 & 1 & \theta_3 \end{array} \hspace{-3pt} \right). $$
 Recall that, according to Theorem \ref{th:heteroclinic}, when the dynamics are described by the matrix $M_9$ one obtains a heteroclinic
 cycle by taking $\theta_i < 1$ for all $i = 1, 2, 3$.
 Recall also that, under the additional conditions \eqref{hc_s} the heteroclinic cycle is locally asymptotically stable whereas it is repelling,
 inducing permanence of the system, under the conditions \eqref{hc_u}.
 Numerical simulations of the nonspatial model further indicate that there is a unique interior equilibrium.
 When starting from any condition in $\tilde S_3$ excluding this interior equilibrium and under the assumptions \eqref{hc_s}, the
 trajectories get closer and closer to the boundaries of the system indicating that for large times at least one of the densities is
 arbitrarily close to zero, whereas under the assumptions \eqref{hc_u}, the trajectories converge to the interior equilibrium.
 In contrast, numerical simulations of the spatial three-type model indicate that taking $\theta_i < 1$ for all $i = 1, 2, 3$, suffices
 to obtain coexistence of all three species in space.
 These simulation results are reported in Table \ref{tab:matrix_9} where $\theta_3 = 0.80$.
 Note that, even when both parameters $\theta_1$ and $\theta_2$ are strictly larger than one half, in which case \eqref{hc_s} is satisfied and so
 the nonspatial model is not permanent, all three densities are bounded away from zero and the cluster coefficient
 is strictly smaller than that of the three-type voter model.
 In conclusion, whereas the inclusion of local interactions reduces the coexistence region when all three pairs of
 species are in a cooperator-cooperator relationship, it increases the coexistence region when on the contrary all three pairs of species
 are in a defector-cooperator relationship.
 The three snapshots of the spatial model in Figure \ref{fig:matrix_9} indicate that this also holds when starting with two
 species at low density.

\begin{table}[t!]
  \centering
  \scalebox{0.32}{\input{tab-matrix_9.pstex_t}} \vspace{5pt}
  \caption{}
\label{tab:matrix_9}
\end{table}

\begin{figure}[h!]
\centering
 \mbox{\subfigure[$\theta_1 = 0.04$]{\epsfig{figure = 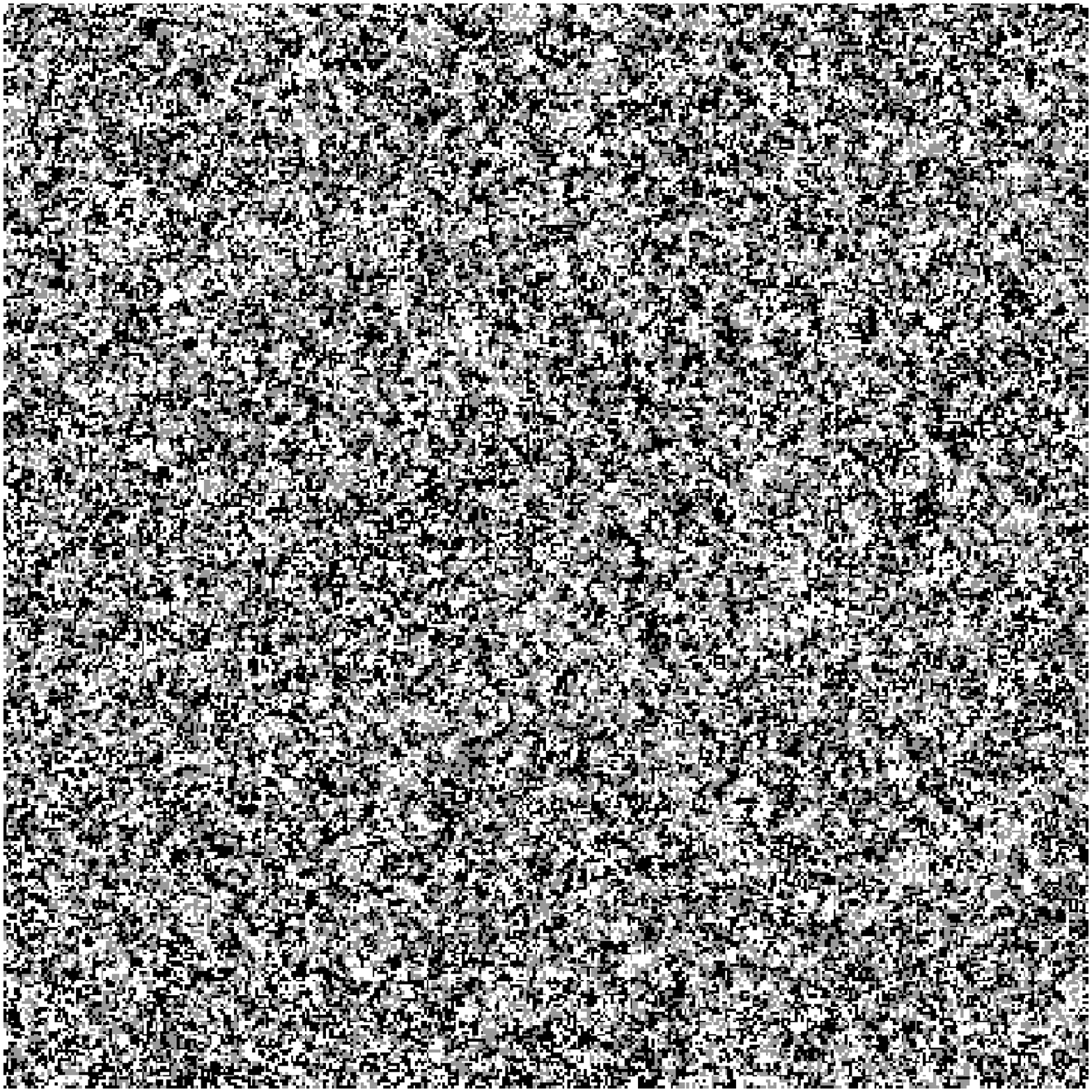, width = 140pt}}} \hspace{5pt}
 \mbox{\subfigure[$\theta_1 = 0.25$]{\epsfig{figure = 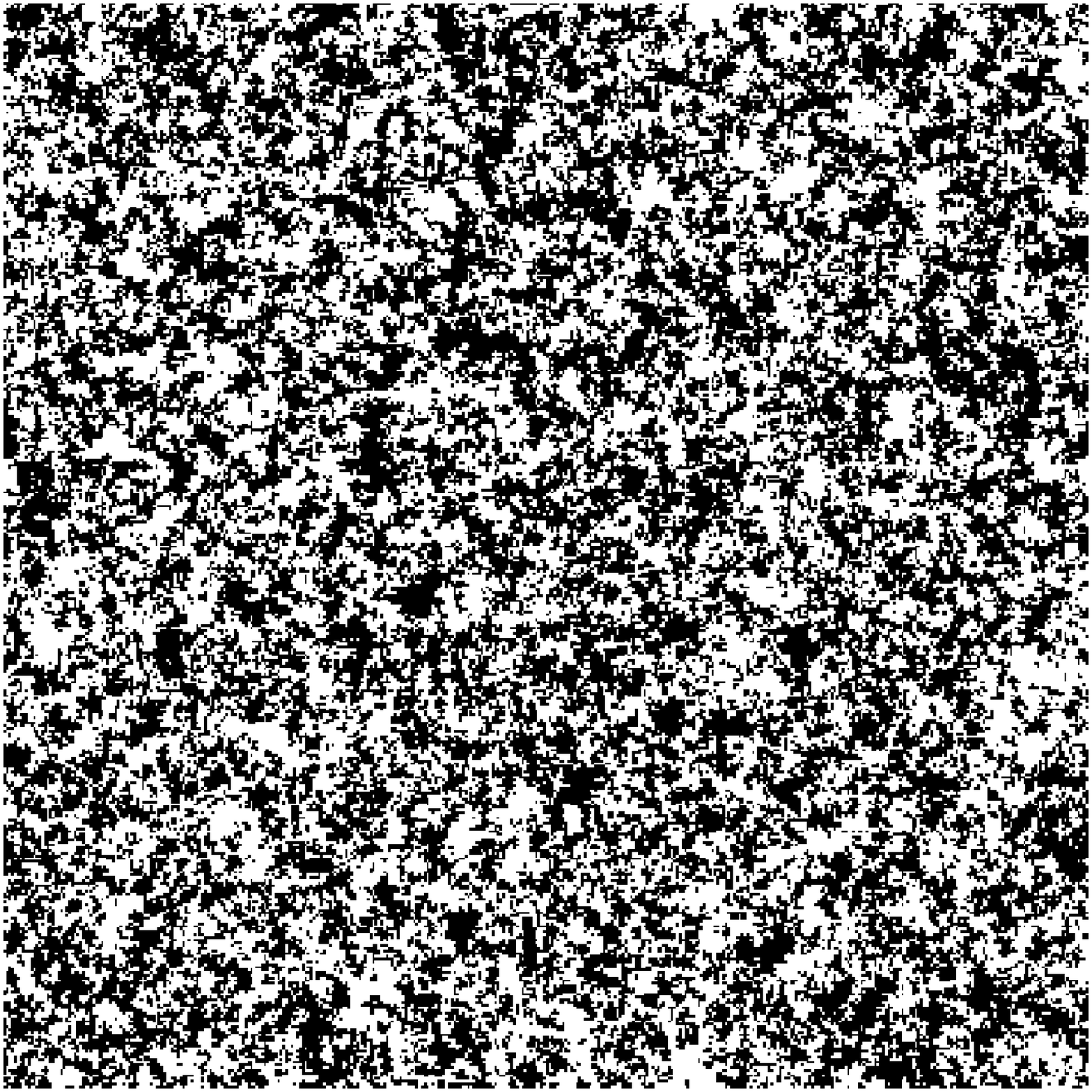, width = 140pt}}} \hspace{5pt}
 \mbox{\subfigure[$\theta_1 = 0.80$]{\epsfig{figure = 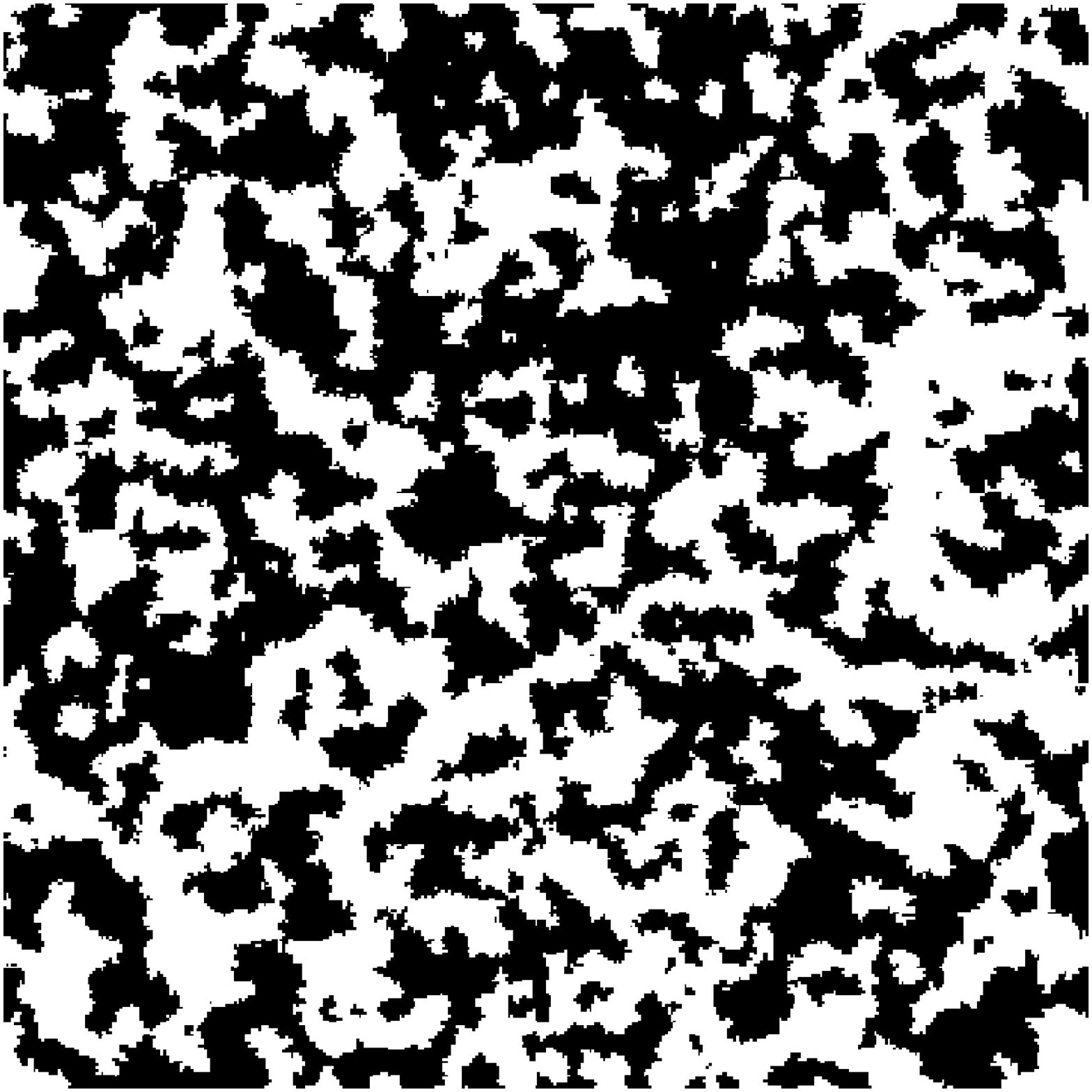, width = 140pt}}} \\
 \mbox{\subfigure[$\theta_1 = 0.25$]{\epsfig{figure = 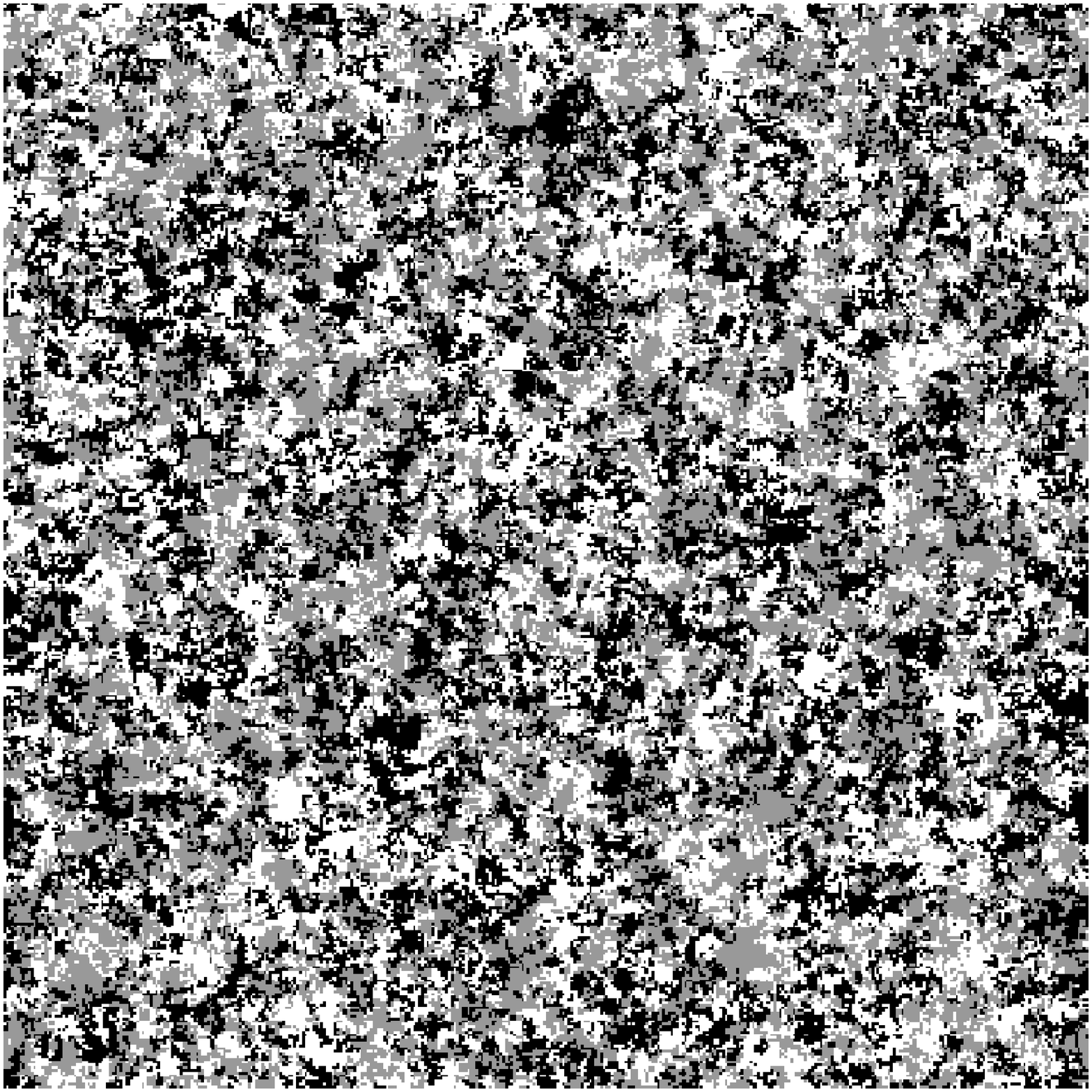, width = 140pt}}} \hspace{5pt}
 \mbox{\subfigure[$\theta_1 = 0.50$]{\epsfig{figure = 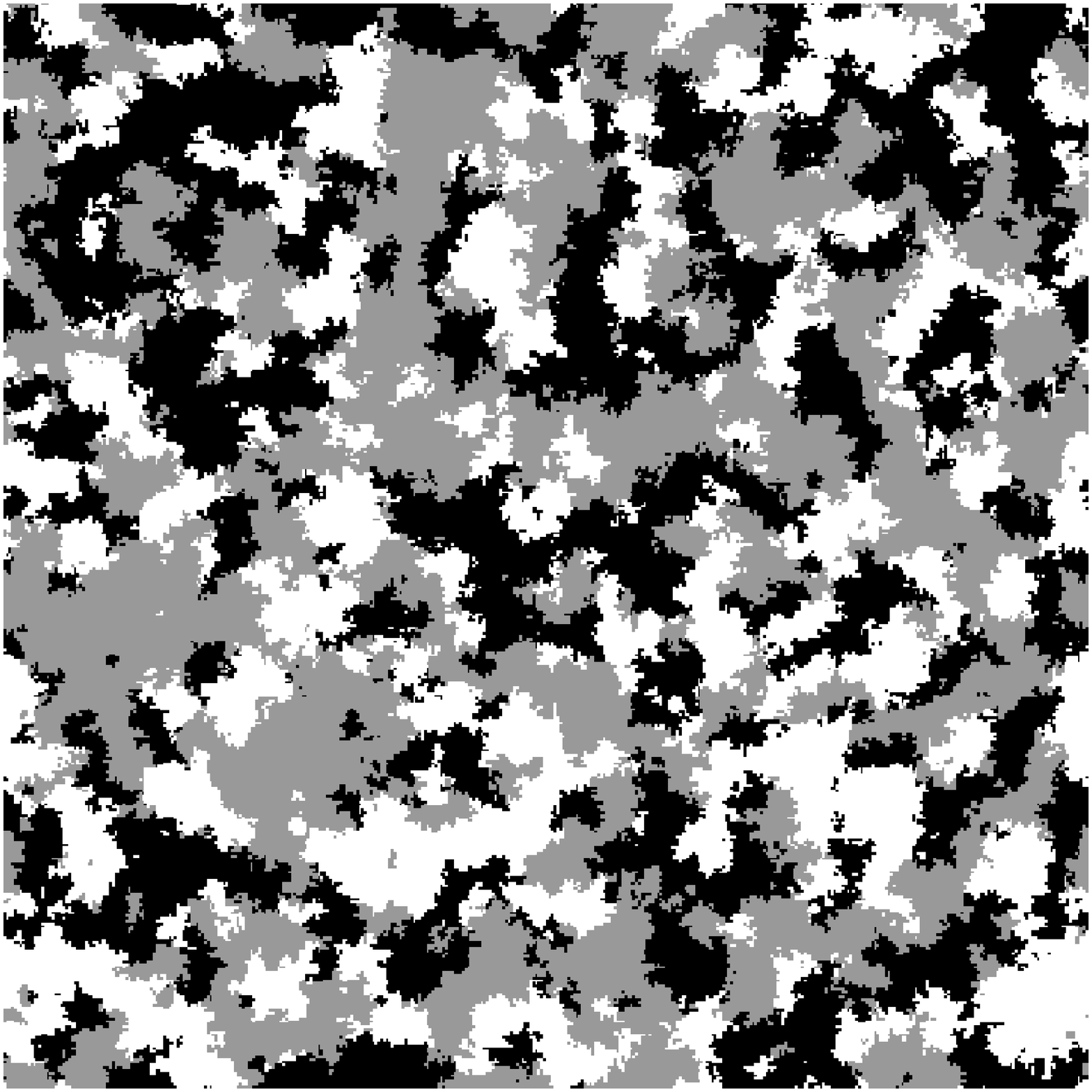, width = 140pt}}} \hspace{5pt}
 \mbox{\subfigure[$\theta_1 = 0.75$]{\epsfig{figure = 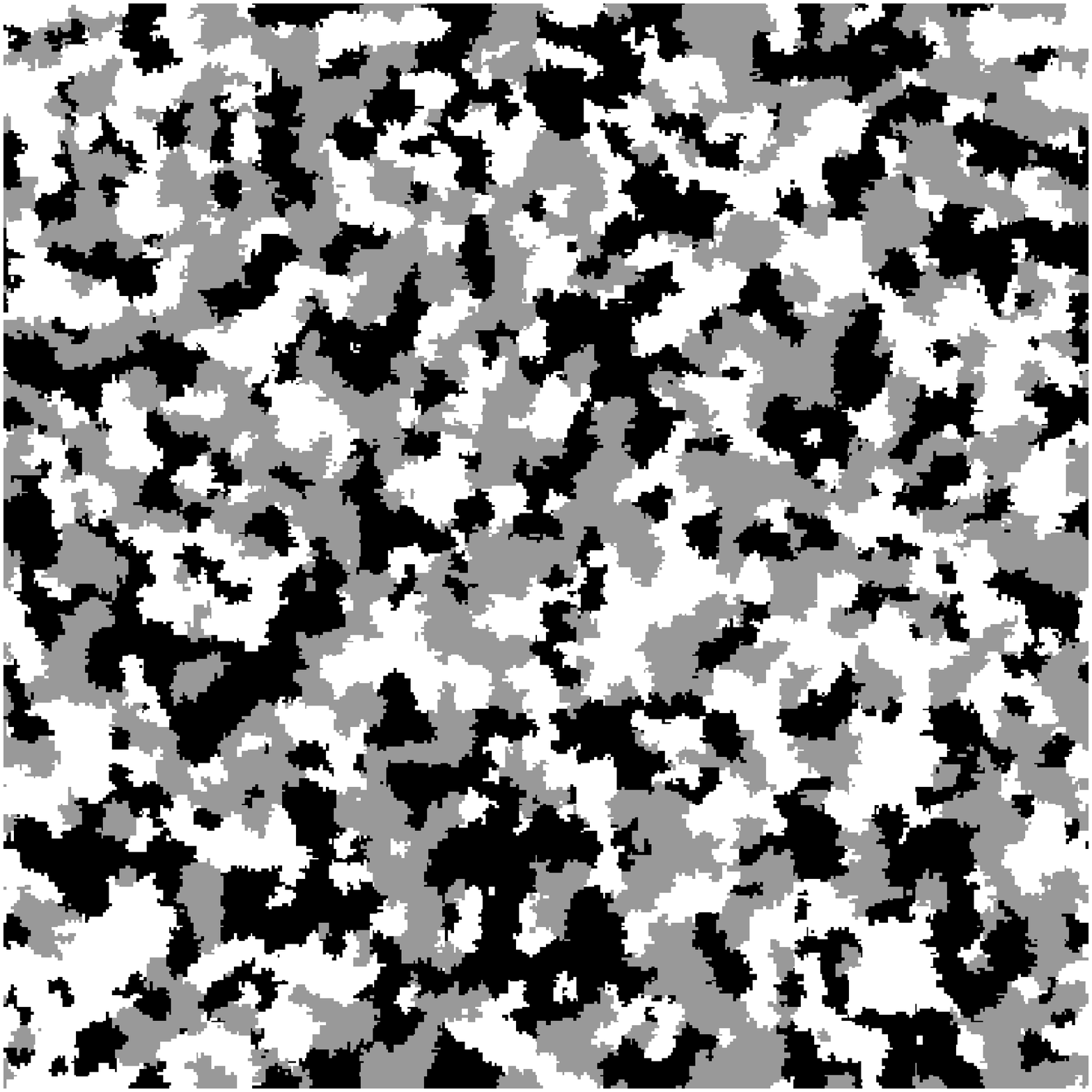, width = 140pt}}}
\caption{\upshape Snapshots of the three-type model at time 200 when starting from a product measure in which sites are equally likely
 to be of either type and when the local interactions are described by the matrix $M_9$.
 In the first row, the parameters are $\theta_1 = \theta_2$ and $\theta_3 = 0$, while in the second row,
 $\theta_1 = \theta_2 = \theta_3$.} \vspace{10pt}
\label{fig:matrix_8}
\centering
 \mbox{\subfigure[time  40]{\epsfig{figure = 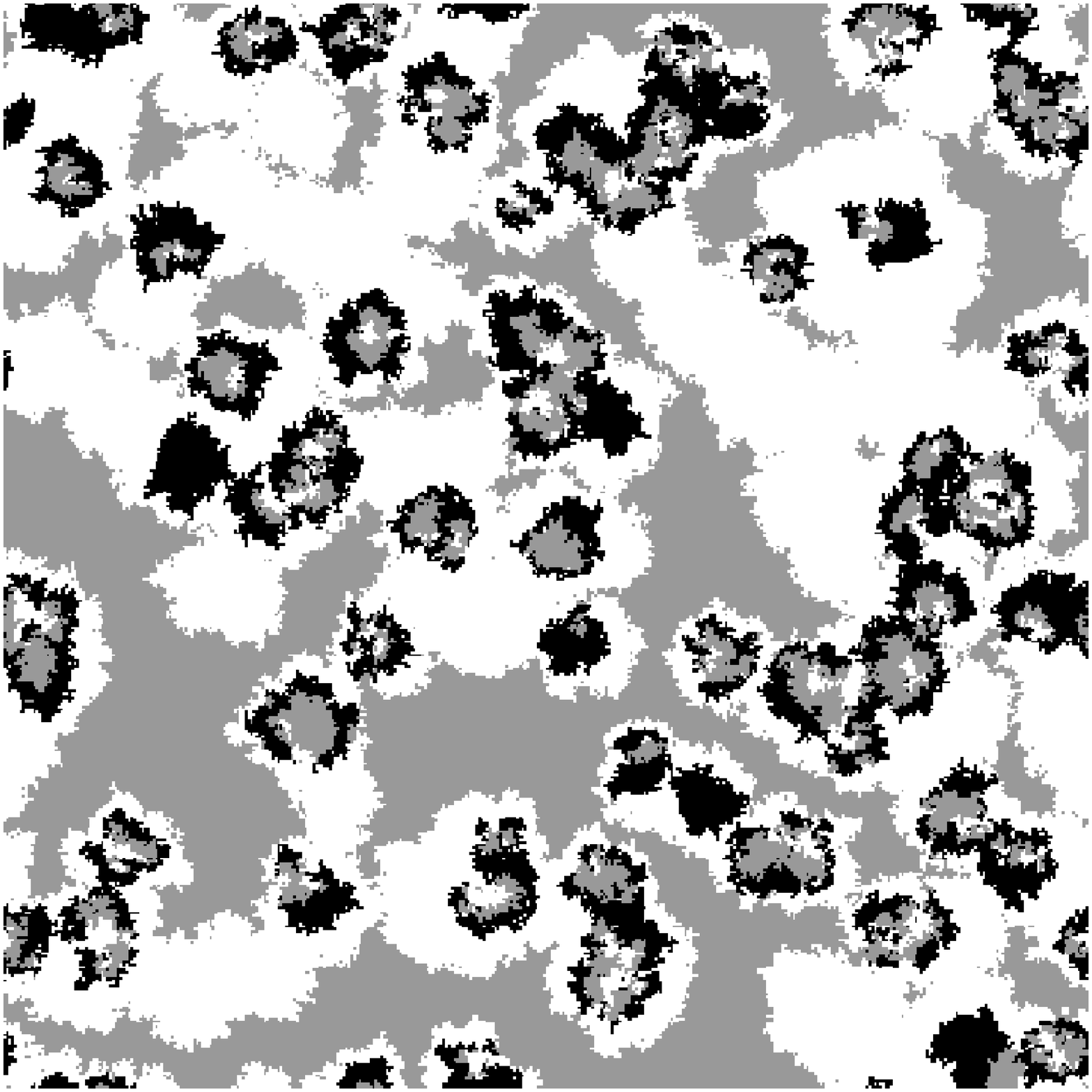, width = 140pt}}} \hspace{5pt}
 \mbox{\subfigure[time  80]{\epsfig{figure = 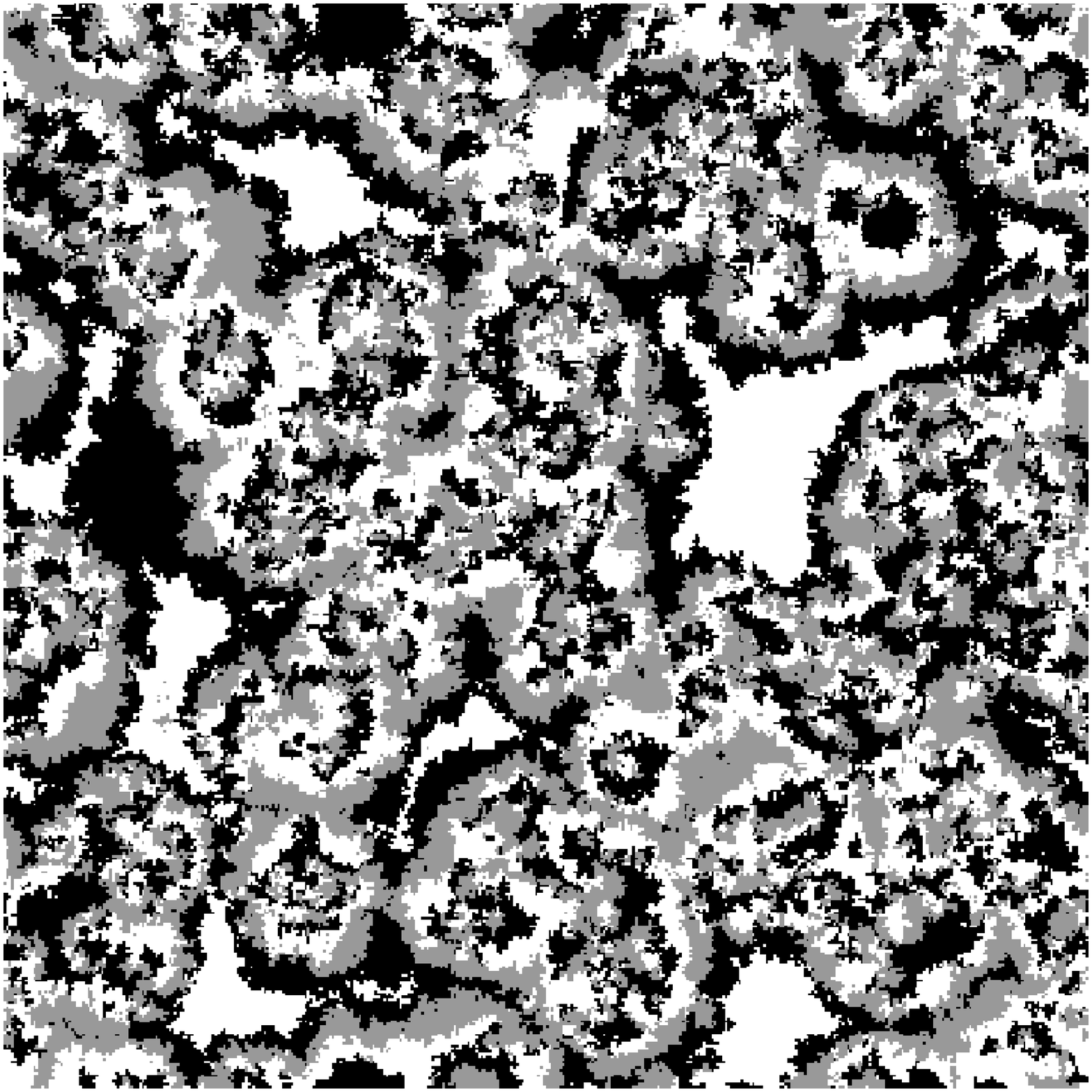, width = 140pt}}} \hspace{5pt}
 \mbox{\subfigure[time 120]{\epsfig{figure = 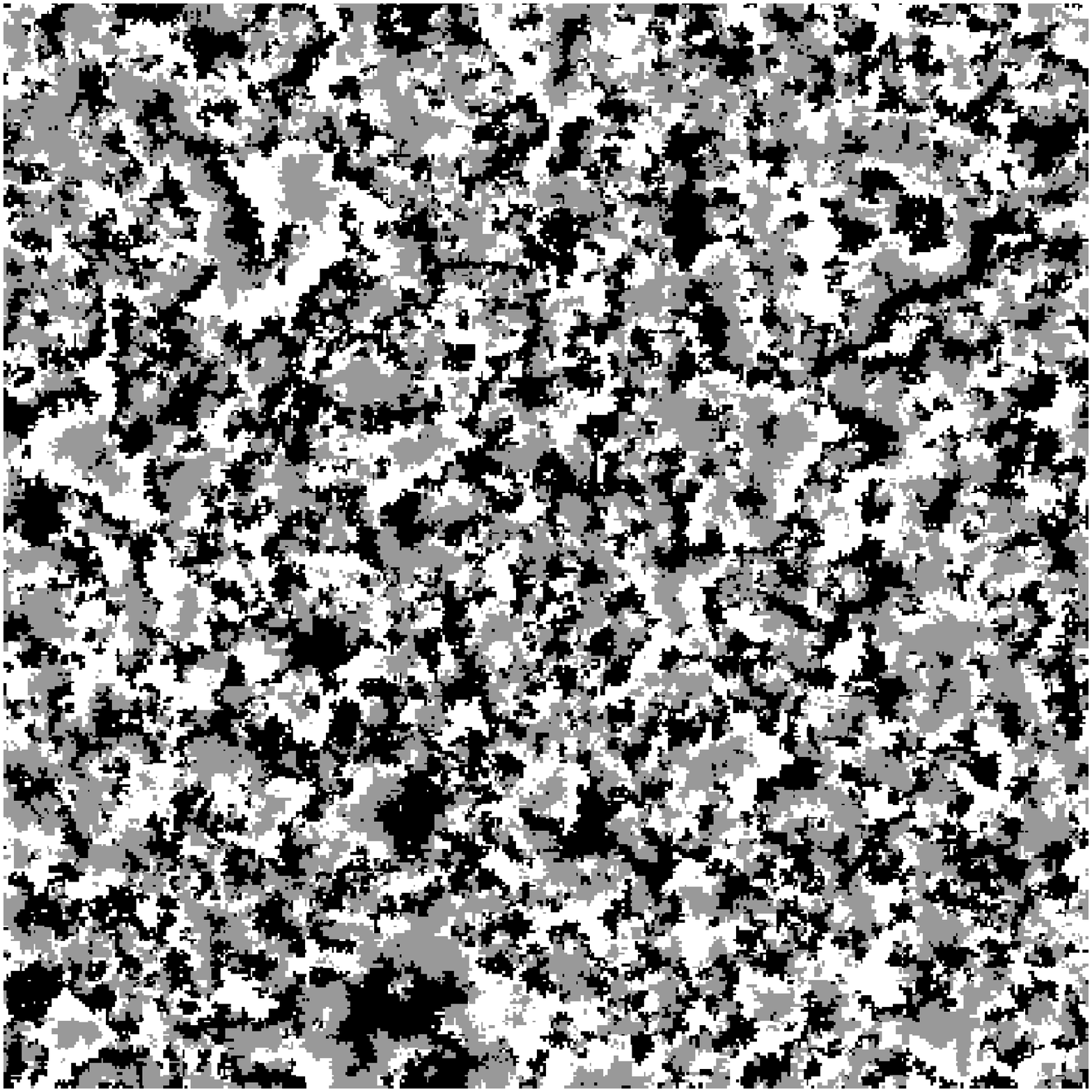, width = 140pt}}}
\caption{\upshape Snapshots of the spatial model starting with 90\% of rocks (black) and 5\% of papers and scissors (grey and
 white) when the local interactions are described by the matrix $M_9$ with $\theta_1 = \theta_2 = \theta_3 = 0.80$.}
\label{fig:matrix_9}
\end{figure}

\indent We finally explain the reason why local interactions promote coexistence in the case of rock-paper-scissors dynamics.
 The dynamical picture of the spatial model at equilibrium shows that small clusters keep forming very quickly but then shrink again
 immediately: residuals of one species, say paper, invade nearby clusters of rocks very quickly resulting in clusters of papers
 which, when they reach a critical size, are invaded and destroyed by nearby residuals of scissors, so the system exhibits local
 traveling waves.
 Focusing on a reasonably small piece of the lattice, one sees a majority of rocks, which are suddenly invaded by a wave of papers that
 become majoritary, which themselves are suddenly invaded by a wave of scissors, and so on.
 In particular, the dynamics predicted by the nonspatial model under conditions \eqref{hc}-\eqref{hc_s} indeed reflects the dynamics of
 the spatial model viewed in a small region of the lattice.
 However, due to the presence of local interactions, the global densities become irrelevant and small pieces of the lattice that are far
 from each other are almost independent in such a way that the nonspatial dynamics are only symptomatic of what happens in small spatial
 regions.
 In addition, due to stochasticity, there is a strong asynchrony among nearby regions so, even if a region is void in one type, this
 type can quickly reappear.

%%%%%%%%%%%%%%%%%%%%%%%%%%%%%%%%%%%%%%%%%%%%%%%%%%%%%%%%%%%%%%%%%%%%%%%%%%%%%%%%%%%%%%%%%%%%%%%%%%%%%%%%%%%%%%%%%%%%%%%%%%%%%%%%%%%%%%%%%%

\section{Discussion}
\label{sec:discussion}

\indent The long-term behavior of ecological communities involving competitive defector-defector interactions, cooperative
 interactions, and defector-cooperator interactions are difficult to predict mainly because the fitness of each individual
 depends on the environment, which induces a constant feedback between the set fitnesses and the configuration of the community.
 In situations where one species has a higher ability than all the other species to exploit resources of either type, the fitness
 of the first species is always strictly larger than that of the other ones regardless of the configuration of the community.
 Excluding such situations, the interacting particle system and its mean-field approximation generally disagree.
 This is due to the fact that the environment seen by each individual strongly differs in the presence and in the
 absence of a spatial structure.

\indent Regardless of the size of the community, when all pairs of species are in a competitive relationship, the mean-field
 model predicts multistability of the system: one type eventually takes over and this dominant type is determined by the
 combination of both the parameters of the system and the initial densities.
 In contrast, in the presence of local interactions, the dominant type is uniquely determined by the parameters.
 The reason is that, regardless of the global densities, the fractions of species seen from the spatial interface
 between two types are roughly the same for both types which results in traveling waves that expand in favor of the dominant type.

\indent In the presence of only two species, cooperation is the only mechanism that promotes coexistence: in the absence
 of space, coexistence occurs if and only if cooperation occurs.
 Introducing additional species has two important consequences.
 First, even in the mean-field approximation, global cooperation alone does not result in the coexistence of all species
 since a too strong cooperative behavior between two species can drive the other species to extinction.
 In particular, that any two species can coexist does not imply that species all together can coexist.
 Second, there are additional mechanisms that promote coexistence.
 Rock-paper-scissors dynamics in which all pairs of species are in a defector-cooperator relationship is an example of such
 a mechanism.
 In particular, that any two species cannot coexist in the absence of the other species does not imply that species all
 together cannot coexist.
 Another important aspect is that, at least in the presence of at most three species, whenever two species are in a
 competitive defector-defector relationship, global coexistence is never possible.
 Therefore, the collection of mechanisms that promote coexistence when three species are present consist of global cooperative
 behavior, rock-paper-scissors type dynamics, and mixture of these two extreme cases.

\indent The effect of local interactions in the presence of either global cooperation or rock-paper-scissors
 dynamics is probably the most interesting aspect of our study:
 in the presence of mutual cooperative relationships, the inclusion of local interactions translates into a reduction of
 the coexistence region, whereas in the presence of rock-paper-scissors dynamics, the inclusion of local interactions
 translates into an expansion of the coexistence region.
 The intuition behind the first statement is that the fraction of individuals of a given species seen by an individual
 of another species is significantly smaller in the interacting particle system than in the mean-field model indicating
 that, in the presence of local interactions, cooperative species cannot fully benefit from the resources produced by the
 other species.
 This effect is more pronounced in low spatial dimensions.
 Finally, the intuition behind the second statement is that the behavior predicted by the mean-field model in the presence
 of rock-paper-scissors dynamics is only symptomatic of the local behavior of the interacting particle system seen in
 a fairly small region.
 Since in addition stochasticity induces a strong asynchrony among disjoint spatial regions, when a species is driven
 to extinction locally, it is quickly reintroduced from nearby regions.

%%%%%%%%%%%%%%%%%%%%%%%%%%%%%%%%%%%%%%%%%%%%%%%%%%%%%%%%%%%%%%%%%%%%%%%%%%%%%%%%%%%%%%%%%%%%%%%%%%%%%%%%%%%%%%%%%%%%%%%%%%%%%%%%%%%%%%%%%%

\noindent\textbf{Acknowledgment}.
 The authors would like to thank an anonymous referee for important comments that helped to improve the article.

%%%%%%%%%%%%%%%%%%%%%%%%%%%%%%%%%%%%%%%%%%%%%%%%%%%%%%%%%%%%%%%%%%%%%%%%%%%%%%%%%%%%%%%%%%%%%%%%%%%%%%%%%%%%%%%%%%%%%%%%%%%%%%%%%%%%%%%%%%

\end{document}